\documentclass[english,12pt]{article}
\usepackage[english]{babel}
\usepackage[normalem]{ulem}
\usepackage[hmargin=2.3cm,vmargin=2.36cm]{geometry}
\usepackage{amssymb,amsmath,amsthm,mathtools,braket,aligned-overset}
\usepackage{thmtools,thm-restate}

\usepackage{graphicx,xcolor}
\usepackage{float}
\usepackage{caption,subcaption}
\usepackage{tikz}
\usetikzlibrary{calc,decorations.pathmorphing,decorations.text,decorations.markings,matrix,shadings,shapes.geometric}

\usepackage{array,booktabs}
\usepackage{enumerate,enumitem}

\usepackage{algorithm}
\usepackage{algpseudocode}

\usepackage{appendix}
\usepackage{titlesec}
\usepackage{authblk}
\usepackage{bm}

\usepackage{hyperref}
\usepackage[capitalise,nameinlink,noabbrev]{cleveref}

\usepackage[square,sort,comma,numbers]{natbib}
\allowdisplaybreaks

\theoremstyle{plain}
\newtheorem{theorem}{Theorem}[section]
\newtheorem{question}[theorem]{Question}

\newtheorem{claim}{Claim}
\newtheorem{lemma}[theorem]{Lemma}

\newtheorem{proposition}[theorem]{Proposition}

\newtheorem{definition}[theorem]{Definition}
\newtheorem{remark}[theorem]{Remark}

\newlength{\bibitemsep}
\newlength{\bibparskip}
\let\oldthebibliography\thebibliography
\renewcommand\thebibliography[1]{
  \oldthebibliography{#1}
  \setlength{\parskip}{\bibitemsep}
  \setlength{\itemsep}{\bibparskip}
}

\newenvironment{proof*}[1][Proof]
{\begin{proof}[#1]}
{\end{proof}}

\newcommand{\diag}{\operatorname{diag}}
\newcommand{\one}{\mathbf 1}

\newcommand{\Z}{\mathbb Z}

\title{On the structure of graphs with given odd girth \\ and large algebraic connectivity}

\author[1]{Zhengbo Chen} 
\author[2]{Chenxing Li}
\author[2]{Zhouningxin Wang}

\affil[1]{\small School of Mathematical Sciences, Shanghai Jiao Tong University, Shanghai 200240, China.}
\affil[2]{\small School of Mathematical Sciences and LPMC, Nankai University, Tianjin 300071, China. \linebreak Emails: czb911@sjtu.edu.cn,  chenxingli@mail.nankai.edu.cn, wangzhou@nankai.edu.cn.}

\date{}

\begin{document}
\maketitle

\begin{abstract}
A classical result of Andr\'asfai, Erd\H{o}s, and S\'os states that every $n$-vertex graph with odd girth at least $2k+1$ and minimum
degree larger than $\frac{2n}{2k+1}$
is bipartite. Rather than imposing a minimum-degree condition, in this paper we investigate conditions on algebraic connectivity that force
graphs of given odd girth to have a simple structure. The algebraic connectivity of a graph $G$, denoted by $\mu_2(G)$, is the second smallest eigenvalue of its Laplacian matrix. Our main results are as follows.
\begin{itemize}
    \item Every $n$-vertex triangle-free graph $G$ with $\mu_2(G)\geq \frac{n}{3}$ is bipartite. Moreover, the constant $\frac{1}{3}$ is asymptotically best possible.
    \item For $k\geq 3$, every $n$-vertex graph $G$ of odd girth at least $2k+1$ with $\mu_2(G)>\frac{4n}{6k-1}$ is bipartite.
    \item For $k\geq 22$, every $n$-vertex graph $G$ of odd girth at least $2k+1$ with $\mu_2(G)>\frac{3456n}{k^3}$ is bipartite. Moreover, the term $k^{-3}$ is asymptotically best possible.
\end{itemize}
\end{abstract}

\noindent \textbf{Keywords:} algebraic connectivity; odd girth; bipartite graphs; homomorphic bounds

\section{Introduction}
The study of minimum-degree conditions that force colorability or homomorphisms to prescribed targets in graphs of large odd girth has long been a central topic in extremal graph theory. The \emph{odd girth} of a graph $G$ is the length of a shortest odd cycle in $G$. In particular, a graph is \emph{triangle-free} if its odd girth is at least $5$. A fundamental result of Andr\'asfai, Erd\H{o}s, and S\'os~\cite{AES1974} characterized the extremal structures of $n$-vertex graphs of odd girth at least $2k+1$ with $\delta(G)>\frac{2n}{2k+1}$. The bound $\frac{2n}{2k+1}$ is tight, as whenever $2k+1$ divides $n$, any $n$-vertex balanced blow-up of $C_{2k+1}$ is nonbipartite, has odd girth $2k+1$, and is $\frac{2n}{2k+1}$-regular.

\begin{theorem}[Andr\'asfai, Erd\H{o}s, and S\'os~\cite{AES1974}]\label{thm:triangle-free-minimum-degree}
Let $k$ be a positive integer with $k\geq 2$. Every $n$-vertex graph of odd girth at least $2k+1$ with $\delta(G)>\frac{2n}{2k+1}$ is bipartite. 
\end{theorem}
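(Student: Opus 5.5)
We argue by contradiction, using a shortest odd cycle. Suppose $G$ has odd girth at least $2k+1$ and $\delta(G)>\frac{2n}{2k+1}$, but $G$ is not bipartite. Then $G$ contains an odd cycle. Let $C=v_0v_1\cdots v_{\ell-1}$ be a shortest odd cycle, so $\ell$ is odd and $\ell\ge 2k+1$.

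\textbf{Step 1: each vertex has few neighbours on $C$.} Fix any vertex $x$ (on $C$ or off it). We claim $x$ has at most $2$ neighbours on $C$. Suppose instead that $x$ is adjacent to $v_i$ and $v_j$ with $i\neq j$, where the two arcs of $C$ between $v_i$ and $v_j$ have lengths $a$ and $\ell-a$. Since $\ell$ is odd, exactly one of these lengths is odd, say $a$.
\begin{itemize}
\item Combining the odd arc with the path $v_i x v_j$ gives a closed walk of odd length $a+2$. Every odd closed walk contains an odd cycle of length at most its own length.
\item If $a+2<\ell$, this contradicts the minimality of $C$. Hence $a\ge \ell-2$, i.e.\ $a=\ell-2$, and the even arc has length exactly $2$.
\end{itemize}
So any two neighbours of $x$ on $C$ are at distance exactly $2$ along $C$. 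Three distinct vertices of $C$ cannot be pairwise at distance $2$ when $\ell\ge 5$; this holds since $\ell\ge 2k+1\ge 5$. Therefore $|N(x)\cap V(C)|\le 2$.

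\textbf{Step 2: double counting.} Count the edges between $V(C)$ and $V(G)$, i.e.\ the sum of $\ell$ degrees. From the minimum-degree hypothesis,
\[
\sum_{v\in V(C)}\deg(v)\ \ge\ \ell\,\delta(G)\ >\ \frac{2\ell n}{2k+1}.
\]
From Step 1, each of the $n$ vertices contributes at most $2$ to this sum, so
\[
\sum_{v\in V(C)}\deg(v)\ \le\ 2n.
\]
Combining the two bounds gives $\frac{2\ell n}{2k+1}<2n$, that is, $\ell<2k+1$. This contradicts $\ell\ge 2k+1$, so $G$ must be bipartite.

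The delicate point is the parity case analysis in Step 1. When $x$ lies on $C$, its two cycle-neighbours are at distance $2$ along $C$, so this case is consistent with the bound. A chord from $x\in V(C)$ to a third vertex of $C$ would create a shorter odd cycle, and the same argument rules it out. For the degenerate short-cycle case, note that $\ell\ge 5$ always holds here, which is why the argument needs $k\ge 2$.
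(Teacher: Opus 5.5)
Your proof is correct. It is the standard short argument: take a shortest odd cycle $C$ of length $\ell\ge 2k+1\ge 5$, show that every vertex of $G$ has at most two neighbours on $C$, and double count to get $\ell\,\delta(G)\le 2n$.

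The paper gives no proof of this statement. It quotes the result from Andr\'asfai, Erd\H{o}s, and S\'os as background, and uses it only together with Fiedler's inequality $\mu_2(G)\le\delta(G)$ to note the trivial spectral threshold. So there is no route in the paper to compare yours against.

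One small precision point. The impossibility of three vertices of $C$ being pairwise at distance $2$ uses that $\ell$ is odd, not merely $\ell\ge 5$: on $C_6$, alternate vertices are pairwise at distance $2$. Since $\ell$ is odd in your setting, the argument stands.

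Your count in fact gives the slightly stronger conclusion $\delta(G)\le \frac{2n}{\ell}$ for any non-bipartite $G$ whose shortest odd cycle has length $\ell\ge 5$. This also explains why balanced blow-ups of $C_{2k+1}$ show the bound is tight.
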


A \emph{homomorphism} of a graph $G$ to another graph $H$ is a vertex mapping $\varphi:V(G)\to V(H)$ such that $uv\in E(G)$ implies $\varphi(u)\varphi(v)\in E(H)$. A graph is bipartite if and only if it admits a homomorphism to $K_2$. 

Beyond bipartiteness, homomorphisms to odd cycles have been extensively studied. H\"aggkvist~\cite{H1982} proved that every $n$-vertex triangle-free graph $G$ with $\delta(G)>\frac{3n}{8}$ admits a homomorphism to $C_5$, while H\"aggkvist and Jin~\cite{HJ1998} showed that every $n$-vertex graph $G$ of odd girth at least seven with $\delta(G)>\frac{n}{4}$ admits a homomorphism to $C_7$. Messuti and Schacht~\cite{MS2015} subsequently extended these results by proving that the conditions $\delta(G)>\frac{3n}{4k}$ and odd girth at least $2k+1$ can guarantee a homomorphism to the odd cycle $C_{2k+1}$. This bound $\frac{3n}{4k}$ is tight, as witnessed by balanced blow-ups of the circular clique $K_{4k/(2k-1)}$ (defined later in~\Cref{def:Circular clique}).

\begin{theorem}[Messuti and Schacht~\cite{MS2015}]\label{thm:large-girth-minimum-degree}
Every $n$-vertex graph $G$ of odd girth at least $2k+1$ with $\delta(G)>\frac{3n}{4k}$ admits a homomorphism to $C_{2k+1}$.
\end{theorem}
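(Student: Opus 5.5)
We prove the statement by a weighted-counting analysis of a shortest odd cycle, combined with an edge-maximality reduction. First, we may assume $G$ is nonbipartite: a bipartite graph maps onto a single edge of $C_{2k+1}$. Next, replace $G$ by an edge-maximal supergraph $G'$ on the same vertex set that still has odd girth at least $2k+1$. Adding edges preserves $\delta>\frac{3n}{4k}$, and a homomorphism $G'\to C_{2k+1}$ restricts to $G$. So we assume $G$ is maximal. Then every non-edge $uv$ is witnessed by an odd $u$--$v$ walk of length at most $2k-1$; equivalently, adding $uv$ closes an odd cycle of length less than $2k+1$. This gives a strong "closure'' property that we use repeatedly.

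Let $C=v_0v_1\cdots v_{2\ell}$ be a shortest odd cycle, so $2\ell+1\ge 2k+1$. Since $C$ is shortest, a vertex off $C$ has at most two neighbours on $C$, and if it has two they are $v_{i-1},v_{i+1}$ for some $i$; otherwise we get a shorter odd cycle or a chord. Double counting edges between $C$ and $V(G)$ gives $(2\ell+1)\delta\le 2n$, so $2\ell+1<\frac{8k}{3}$. In particular $C$ is "short'', and its neighbourhood $N(C)$ occupies a positive proportion of the vertices. We then define $\varphi$ on $N(C)\cup V(C)$ by sending $v_i\mapsto i$ and each $x$ with neighbours among $\{v_{i-1},v_{i+1}\}$ to $i$, working modulo $2\ell+1$. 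This is a homomorphism into $C_{2\ell+1}$, and $C_{2\ell+1}\to C_{2k+1}$ since $\ell\ge k$.

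The main task is to extend $\varphi$ to the vertices far from $C$, and this is the step I expect to be the obstacle. I would first try a "distance labelling'' relative to a few disjoint neighbourhoods $N(v_i)$. Each vertex $w$ has more than $\frac{3n}{4k}$ neighbours. By an averaging argument over the $2\ell+1$ sets $N(v_i)$, together with the bound $2\ell+1<\frac{8k}{3}$, the sets $N(w)$ and $N(x)$ must intersect for suitable $x\in N(C)$. So every vertex is joined by a path of length $2$ to a labelled vertex, and we label $w$ by $\varphi(x)$.

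Well-definedness is where the $\frac{3}{4k}$ threshold should be tight. If $w$ received two labels $i,j$ with $|i-j|\ge 2$, we would combine the two length-$2$ paths with the arc of $C$ between positions $i$ and $j$. Choosing the shorter of the two parities of arcs then produces an odd closed walk of length at most $2k-1$, a contradiction. Finally we check that adjacent vertices get labels at distance exactly one. Here the maximality of $G$ is used: neighbourhoods of non-adjacent vertices at even distance can be compared through the short odd walks supplied by maximality.

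If the direct labelling fails for some configurations, the fallback is a discharging argument. Assign weight $\frac{1}{|N(v)|}$ from each $v$ to its neighbours, and compare the total weight absorbed by the $2\ell+1$ neighbourhood classes against $n$. The extremal blow-up of $K_{4k/(2k-1)}$ indicates that equality is exactly where the counting breaks, which confirms the strict inequality is what is needed.
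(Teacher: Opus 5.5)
The paper does not prove this statement; it quotes it from Messuti and Schacht. Your argument therefore has to stand on its own, and it has a genuine gap at its core.

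Apart from deriving $2\ell+1<\frac{8k}{3}$, nothing you actually justify uses $\delta(G)>\frac{3n}{4k}$. That bound also holds for the extremal graph $K_{4k/(2k-1)}$ itself, where $\delta=3=\frac{3n}{4k}$. This graph has odd girth $2k+1$ but, by \Cref{lem:K_4k/2k-1ToC_2k+1}, no homomorphism to $C_{2k+1}$. In $K_{4k/(2k-1)}$ the cycle $C=0,\,2k+1,\,2,\,2k+3,\dots,2k-2,\,4k-1,\,2k$ is a shortest odd cycle. Every other vertex has a neighbour on it: an odd $j\in[1,2k-1]$ is adjacent to $j+2k$, and an even $j\in[2k+2,4k-2]$ is adjacent to $j-2k$. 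So $V(C)\cup N(C)$ is the whole vertex set. You assert, without invoking the degree condition, that $\varphi$ on $V(C)\cup N(C)$ ``is a homomorphism into $C_{2\ell+1}$''. This example shows that claim cannot follow from the shortest-odd-cycle structure alone. The real difficulty is the vertices with a single neighbour $v_i$ on $C$: each may be sent to $i-1$ or $i+1$, and these choices are coupled along edges. For instance, $x\sim v_{i-1},v_{i+1}$ and a vertex $y$ whose only neighbour on $C$ is $v_{i+2}$ may be adjacent, since the odd cycle created has length exactly $2\ell+1$; then $y$ must go to $i+1$. Showing that all such constraints are simultaneously satisfiable is exactly where the strict inequality must exclude a $K_{4k/(2k-1)}$-like configuration. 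Your proposal contains no such argument, and the ``discharging fallback'' only restates that equality is where the counting should break.

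The extension step is also wrong as stated. The claim that every vertex has a path of length $2$ to $V(C)\cup N(C)$ is not implied by any averaging, and it is false in general. For $k=10$, join a balanced complete bipartite graph on $\frac n5$ vertices by a single edge $pq$ to a balanced blow-up of $C_{21}$ on $\frac{4n}{5}$ vertices. All degrees then exceed $\frac{3n}{40}$ and the odd girth is $21$. For a transversal $21$-cycle $C$ avoiding $q$, every vertex on $p$'s side other than $p$ is at distance $3$ from $V(C)\cup N(C)$.

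More fundamentally, labelling $w$ by the label of a vertex at distance $2$ is incompatible with the target. A vertex whose image in $C_{2k+1}$ is $i$ typically has length-$2$ paths to vertices with images $i-2$, $i$ and $i+2$. Your well-definedness contradiction also does not materialise. Suppose $w$ has length-$2$ paths to labelled vertices with labels $i$ and $i+2$. Closing these through $C$ into an odd walk forces the parity of the arc; it is not chosen. For $v_i$ and $v_{i+2}$ the arc has length $2\ell-1$, giving a walk of length $2\ell+3$, and the other cases are no shorter. So no odd walk of length at most $2k-1$ arises.

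The check that adjacent vertices receive labels at distance one is not argued at all. Finally, a small slip: edge-maximality gives, for each non-edge $uv$, an \emph{even} $u$--$v$ path of length at most $2k-2$, not an odd walk of length at most $2k-1$.
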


For other homomorphism targets, various results were obtained. Jin~\cite{J1993} showed that, for each $1\leq k\leq 9$, every $n$-vertex triangle-free graph $G$
satisfying $\delta(G)>\lfloor\frac{(k+1)n}{3k+2}\rfloor$ admits a homomorphism to the circular clique $K_{(3k-1)/k}$, and the corresponding minimum-degree bound is sharp. The Brandt--Thomass\'e structure theorem~\cite{BT2005,LPR2021} asserted that every $n$-vertex maximal triangle-free graph $G$ with $\delta(G)>\frac{n}{3}$ is a complete blow-up of an Andr\'asfai graph or a Vega graph. \L{}uczak~\cite{L2006} showed that, for any given $\varepsilon>0$, every $n$-vertex triangle-free graph $G$ with $\delta(G)\geq (\frac{1}{3}+\varepsilon) n$ admits a homomorphism to a triangle-free graph $H(\varepsilon)$ of bounded order. 

For odd girth $7$, Brandt and Ribe-Baumann~\cite{BR2009} showed that a weaker condition $\delta(G)>\frac{4n}{17}$ forces a homomorphism of $G$ to $K_{12/5}$. Recently, Lu and Wang~\cite{LW2026+} extended this phenomenon by proving that, for every $k\geq2$, every $n$-vertex graph $G$ of odd girth at least $2k+1$ and with $\delta(G)>\frac{4n}{6k-1}$ admits a homomorphism to $K_{4k/(2k-1)}$. Ebsen and Schacht~\cite{ES2020} showed that, for every $k\geq3$ and every $\varepsilon>0$, every $n$-vertex graph $G$ of odd girth at least $2k+1$ with $\delta(G)\geq (\frac{1}{2k-1}+\varepsilon)n$ admits a homomorphism to a graph $H(k,\varepsilon)$ of bounded order that has odd girth at least $2k+1$.

\medskip
In this paper, we study a Laplacian-spectral counterpart of this theory. For any $n$-vertex graph $G$ with $V(G)=\{v_1, \ldots, v_n\}$, let $D(G)$ denote the $n\times n$ diagonal matrix $(d_{ij})_{n\times n}$ defined by $d_{ij}=d_G(v_i)$ (where $d_G(v_i)$ denotes the degree of $v_i$ in $G$) if $i=j$ and $d_{ij}=0$ otherwise. For an $n$-vertex graph $G$ and its adjacency matrix $A(G)$, let $L(G):=D(G)-A(G)$
be its \emph{Laplacian matrix}, and let $\mu_1(G), \ldots, \mu_n(G)$ denote the eigenvalues of $L(G)$ (that is, $\operatorname{Spec}(L(G))=\{\mu_1(G), \ldots, \mu_n(G)\}$), called \emph{Laplacian eigenvalues} of $G$, with $\mu_1(G)\leq \mu_2(G)\le\cdots\le \mu_n(G)$.
Note that $\mu_1(G)=0$. The parameter $\mu_2(G)$ is called the
\emph{algebraic connectivity} of $G$.  Fiedler's classical result~\cite{F1973} shows that every non-complete graph $G$ satisfies $\mu_2(G)\leq \delta(G)$.
The Andr\'asfai--Erd\H{o}s--S\'os theorem (\Cref{thm:triangle-free-minimum-degree}), together with this relation, gives a trivial condition $\mu_2(G)>\frac{2n}{2k+1}$ for forcing a graph of odd girth at least $2k+1$ to be bipartite. For analogues of the Andr\'asfai--Erd\H{o}s--S\'os theorem, we establish better bounds on the algebraic connectivity conditions. Our main results are the following. 

\begin{theorem}\label{thm:main}
Every $n$-vertex triangle-free graph $G$ with $\mu_2(G)\geq \frac{n}{3}$ is bipartite.
Moreover, the constant $\frac{1}{3}$ is asymptotically best possible.
\end{theorem}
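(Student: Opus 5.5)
The upper bound $\mu_2(G)\le\delta(G)$ alone gives only $\delta(G)\ge n/3$, which is below the Andr\'asfai--Erd\H{o}s--S\'os threshold $\frac{2n}{5}$ for $k=2$ (\Cref{thm:triangle-free-minimum-degree}). So I will combine the degree bound with a Rayleigh-quotient upper bound on $\mu_2$ that sees the odd-cycle structure. Suppose for contradiction that $G$ is triangle-free, nonbipartite, and $\mu_2(G)\ge n/3$, so $\delta(G)\ge n/3$.

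\emph{Step 1 (reduce to a $5$-cycle).} Let $C=c_0c_1\cdots c_{2k}$ be a shortest odd cycle, so $2k+1\ge 5$. By minimality, every vertex outside $C$ has at most two neighbours on $C$, and two such neighbours are at distance $2$ on $C$. Each $c_i$ has exactly two neighbours on $C$. Hence
\[
\sum_i d(c_i)\le 2(n-2k-1)+2(2k+1)=2n .
\]
Combined with $d(c_i)\ge n/3$, this forces $2k+1\le 6$, so $C=C_5$ and $\sum_{i}d(c_i)\le 2n$. Now partition $V(G)$ according to the trace on $C$. Let $B_i$ be the set of vertices (including $c_i$ itself) whose neighbourhood on $C$ is exactly $\{c_{i-1},c_{i+1}\}$. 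Let $R$ be the set of vertices with at most one neighbour on $C$. Indices are taken mod $5$. Triangle-freeness and the shortest-cycle condition show that there are no edges inside $B_i$ and no edges between $B_i$ and $B_{i\pm 2}$. Thus $G[\bigcup B_i]$ is a subgraph of a blow-up of $C_5$. Moreover, the counting above becomes $\sum_i |N(c_i)| = 2\sum_i|B_i| + (\text{contribution of } R)\le 2n$, so $|R|$ is controlled by the slack in the degrees of the $c_i$.

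\emph{Step 2 (test vector).} I use $\mu_2(G)=\min_{x\perp \one,\,x\ne0}\frac{\sum_{uv\in E}(x_u-x_v)^2}{\sum_u x_u^2}$. Take $\omega=e^{2\pi i/5}$. Put $x_v=\omega^{i}$ for $v\in B_i$, and set $x_v$ on $R$ to be the average of the values at its neighbours on $C$ (or $0$ if it has none). Then pass to the real or imaginary part and subtract the mean to obtain a vector orthogonal to $\one$. On a balanced blow-up of $C_5$ this vector gives the quotient
\[
\tfrac{n}{5}\bigl(2-2\cos\tfrac{2\pi}{5}\bigr)\approx 0.276\,n<\tfrac n3 .
\]
Edges inside $\bigcup B_i$ only join $B_i$ to $B_{i\pm1}$, so each contributes $|1-\omega|^2$. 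I then bound the number of such edges from above by $\sum_i|B_i||B_{i+1}|$, and the contribution of edges touching $R$ by $|R|$ times a constant times the degrees. The goal is an inequality of the form $\mu_2\le \frac{n}{3}-\varepsilon(\text{imbalance})$, with strict inequality in all cases. To handle unbalanced parts, I will instead choose weights $y_i\omega^i$ on the parts, which reduces the problem to a $5\times5$ weighted-Laplacian eigenvalue estimate for the quotient matrix.

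\emph{Main obstacle.} The delicate step is Step 2 for unbalanced $B_i$ together with a nonempty $R$, where it must be shown that the quotient stays strictly below $n/3$ rather than merely at most about $0.28n$. Here I would use $\delta\ge n/3$ once more. Each $B_i$ has all its neighbours in $B_{i-1}\cup B_{i+1}\cup R$, so $|B_{i-1}|+|B_{i+1}|+|R|\ge n/3$. This makes the sizes nearly balanced, and the quotient of the $5\times5$ problem can then be estimated directly.

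\emph{Sharpness.} For the asymptotic tightness of $\frac13$, I would exhibit a family of nonbipartite triangle-free graphs with $\mu_2/n\to\frac13$. The first candidate is a suitably weighted blow-up of the Andr\'asfai graphs, or $K_{m,m}$ with a sparse odd structure grafted on. I would compute the Laplacian spectrum via the circulant or quotient structure, and tune the weights so that the second-smallest eigenvalue tends to $n/3$.
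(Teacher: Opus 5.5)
Your Step~1 is sound: the Andr\'asfai--Erd\H{o}s--S\'os counting shows that a shortest odd cycle is a $C_5$, and the sets $B_i$ have the adjacency restrictions you state. The argument breaks in Step~2, first because $R$ need not be small. Your count only gives $|R|\le 2n-5\delta(G)\le n/3$. Moreover, $R$ is of linear size for precisely the graphs that have to be excluded. Complete blow-ups of the Andr\'asfai graphs $\Gamma_k$ with $k\ge 3$ have $\delta>n/3$ but admit no homomorphism to $C_5$ (as $\frac{3k-1}{k}>\frac52$), so they are not subgraphs of any blow-up of $C_5$. For instance, take a balanced blow-up of $\Gamma_3=K_{8/3}$ with $C$ running through parts $0,3,6,1,4$. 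Every vertex of part $2$ has exactly one neighbour on $C$, so $|R|=n/8$. Charging the edges at $R$ as ``$|R|$ times a constant times the degrees'' then shifts the Rayleigh quotient by an additive amount of order $|R|$, which swamps any margin below $n/3$. Some genuine structural input is needed at this point. The paper first proves $\delta(G)>n/3$ strictly: a test vector supported on the two sides of a minimum vertex cut shows that $\delta(G)=\mu_2(G)=n/3$ forces $G$ to be complete bipartite. It then adds edges up to a maximal triangle-free graph (using edge-monotonicity of $\mu_2$) and invokes the Brandt--Thomass\'e theorem. Only complete blow-ups of Andr\'asfai and Vega graphs remain, and all of these except $\Gamma_2=C_5$ are dispatched by the induced-matching test vector with $r=\frac13$.

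Second, even when $R=\emptyset$, your claim that the constraints $|B_{i-1}|+|B_{i+1}|+|R|\ge n/3$ force near-balance is false, and this is exactly where the difficulty lies. The blow-up $C_5(\ell,1,1,\ell,\ell)$, that is, $K_{\ell,2\ell}$ plus two vertices, satisfies all of them. Its normalized part sizes tend to $(\frac13,0,0,\frac13,\frac13)$ and $\mu_2/n\to\frac13$; this is the paper's sharpness family. In the limit your vector $\omega^i$ gives the quotient $\frac29|1-\omega|^2/(1-|z|^2)\approx 0.43$, with $z=\frac13(1+\omega^3+\omega^4)$. Even the best vector constant on parts gives $\theta_2(Q_\ell)$, which tends to $\frac13$, so no perturbation of the balanced value $0.276$ can succeed. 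Optimizing the weights $y_i\omega^i$ simply computes $\theta_2(Q)$ for the $5\times5$ normalized quotient matrix. What has to be proved is the exact statement $\theta_2(Q)<\frac13$ for every strictly positive weighting, with $\frac13$ approached only at the boundary of the simplex. The paper expresses $\psi_3(\beta_2-\frac13,\dots,\beta_5-\frac13)$ through $\operatorname{tr}Q$, $\operatorname{tr}Q^2$, $\operatorname{tr}Q^3$ and shows it is $\le 0$, with equality only at cyclic shifts of $(\frac13,\frac13,\frac13,0,0)$. Hence it is negative for positive weights, which is impossible if $\beta_2\ge\frac13$. Finally, the sharpness part needs an explicit family with its limit computed. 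Your first candidate (a weighted blow-up of $\Gamma_2=C_5$) is the right one. Grafting $o(n)$ vertices onto a balanced $K_{m,m}$ cannot work, since two adjacent grafted vertices would need disjoint neighbourhoods of size about $n/3$ inside one side of size about $n/2$.
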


\begin{theorem}\label{thm:main-girth}
Let $k$ be an integer with $k\geq 3$. Every $n$-vertex graph $G$ of odd girth at least $2k+1$ with $\mu_2(G)>\frac{4n}{6k-1}$ is bipartite.
\end{theorem}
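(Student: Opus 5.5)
The plan is to pass from the spectral hypothesis to a minimum-degree hypothesis, use the known homomorphism theorem to get a rigid structure, and then contradict the hypothesis with an explicit test vector. Since $G$ is non-complete (it has no triangles), Fiedler's inequality $\mu_2(G)\le\delta(G)$ turns $\mu_2(G)>\frac{4n}{6k-1}$ into $\delta(G)>\frac{4n}{6k-1}$. Note that $\frac{4n}{6k-1}<\frac{2n}{2k+1}$ for $k\ge 3$, so \Cref{thm:triangle-free-minimum-degree} does not apply directly. Instead I invoke the result of Lu and Wang~\cite{LW2026+}, which gives a homomorphism $\varphi\colon G\to K_{4k/(2k-1)}$. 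Concretely, label $V(K_{4k/(2k-1)})=\Z_{4k}$, with $i\sim j$ iff $i-j\in\{2k-1,2k,2k+1\}\pmod{4k}$. Let $V_i=\varphi^{-1}(i)$. Suppose for contradiction that $G$ is not bipartite. It then remains to show that this blow-up structure forces $\mu_2(G)\le\frac{4n}{6k-1}$.

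For the upper bound I use a Rayleigh quotient with a twisted test vector. Set $\theta=\pi/k$, so that $2k\theta\equiv 0$ and $(2k\pm1)\theta\equiv\pm\theta \pmod{2\pi}$. For a phase $\alpha$, define $h_\alpha(v)=\cos(\theta\varphi(v)+\alpha)$. As a function of $\alpha$, $\sum_v h_\alpha(v)$ is a sinusoid, so it vanishes for some $\alpha$; this makes $h_\alpha\perp\one$, and hence $\mu_2(G)\le h_\alpha^{\top}L(G)h_\alpha/\|h_\alpha\|^2$. The edge cost depends only on the edge type:
\begin{itemize}
\item edges between $V_i$ and $V_{i+2k}$ contribute $0$;
\item edges of difference $2k\pm1$ contribute at most $|e^{i\theta a}-e^{i\theta b}|^2=2-2\cos(\pi/k)$.
\end{itemize}
Non-bipartiteness is used only here: because $G$ is not bipartite, the image of $\varphi$ must contain an odd cycle of $K_{4k/(2k-1)}$, so the image cannot collapse to a bipartite subgraph. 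I would combine this with $\delta(G)>\frac{4n}{6k-1}$ to show that the $4k$ classes are spread enough to give
\[
\frac{h_\alpha^{\top}L(G)h_\alpha}{\|h_\alpha\|^2}\ \lesssim\ (2-2\cos(\pi/k))\cdot\frac{e(G)}{n}\cdot O(1).
\]
Since $2-2\cos(\pi/k)\approx \pi^2/k^2$, this should be far below $\frac{4n}{6k-1}$ once the degree and weight bookkeeping is done.

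I expect the main obstacle to be controlling the denominator $\|h_\alpha\|^2$ for the particular $\alpha$ that makes $h_\alpha\perp\one$. If the mass of $G$ sits on a few classes $V_i$ whose phases $\theta i+\alpha$ are close to $\pm\pi/2$, then $\|h_\alpha\|^2$ could be tiny.

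\begin{itemize}
\item First attempt: use both the real and imaginary parts at once. I would work in the span of $\cos(\theta\varphi)$ and $\sin(\theta\varphi)$, restrict to the (at least one-dimensional) subspace orthogonal to $\one$, and bound the quotient by comparing it with the complex vector $e^{i\theta\varphi}$, whose norm is exactly $n$.
\item Supporting step: use the minimum-degree condition to bound the class sizes from below. Every vertex in $V_i$ has all its neighbours in $V_{i+2k-1}\cup V_{i+2k}\cup V_{i+2k+1}$, so each such triple of classes has total size $>\frac{4n}{6k-1}$. This forces enough spread of the phases to keep the denominator of order $n$.
\item Fallback if the spectral bookkeeping fails: take a shortest odd cycle $C$, of length $2\ell+1\ge 2k+1$. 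Each vertex outside $C$ has at most two neighbours on $C$. Build a test vector that is piecewise constant on $\varphi$-classes along $C$, and carry out the same averaging argument that proves \Cref{thm:triangle-free-minimum-degree}, with degrees replaced by the quadratic form.
\end{itemize}
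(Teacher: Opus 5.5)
\textbf{There is a genuine gap.} Your skeleton is the paper's own: Fiedler, then Lu--Wang, then the phase vector $v\mapsto e^{\mathrm{i}\pi\varphi(v)/k}$. That vector is exactly the paper's test vector in the $K_{4k/(2k-1)}$ case: antipodal edges cost $0$ and the others cost $2-2\cos\frac{\pi}{k}$.

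The denominator problem you flag is solved by subtracting the mean. With $z=\frac1n\sum_{v}e^{\mathrm{i}\pi\varphi(v)/k}$, the vector $e^{\mathrm{i}\pi\varphi/k}-z\one$ is orthogonal to $\one$, so \Cref{lem:complex-rayleigh} applies, and its squared norm is $n(1-|z|^2)$.

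After that, the whole proof lies in two quantitative estimates that you leave as ``bookkeeping''. Your expectation that the result lands ``far below'' $\frac{4n}{6k-1}$ fails at $k=3$. There $2-2\cos\frac{\pi}{3}=1$, so even with $z=0$, Mantel's bound $e(G)\le\frac{n^2}{4}$ only yields $\mu_2(G)\le\frac n4>\frac{4n}{17}$.

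Write $x_j=|V_j|/n$, $d_j=x_{j+2k-1}+x_{j+2k}+x_{j+2k+1}$, and $a_j=x_j+x_{j+2k}$ for $j\in\Z_{2k}$. The paper needs two bounds:
\begin{enumerate}[label=(\roman*)]
\item $|z|<\frac16$. This comes from $a_{j-1}+a_j+a_{j+1}=d_j+d_{j+2k}\ge\frac{8}{6k-1}$ for every $j$, combined with $\sum_j(a_{j-1}+a_j+a_{j+1})\omega^j=(1+2\cos\frac{\pi}{k})z$ for $\omega=e^{\mathrm{i}\pi/k}$ and the triangle inequality.
\item $\mathcal W\le\frac{13k-11}{8(6k-1)}$ for the normalized weight of non-antipodal links. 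This uses $d_j\le\frac{2k+1}{6k-1}$, which follows from $\sum_j d_j=3$ together with $d_j\ge\frac{4}{6k-1}$ for all $4k$ indices.
\end{enumerate}
Even with both, the final bound at $k=3$ is only slightly below $\frac{4}{17}$.

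Both (i) and (ii) require every one of the $4k$ classes $V_j$ to be nonempty, and that is the second missing ingredient. Your observation that the image contains an odd cycle does not supply it. An odd cycle of $K_{4k/(2k-1)}$ winds around $\Z_{2k}$, so it only guarantees that every residue class mod $2k$ is hit. That gives $a_{j-1}+a_j+a_{j+1}>\frac{4}{6k-1}$, which is half of what (i) uses, and it loses the upper bound on the $d_j$ needed for (ii).

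The paper closes this with a case split that your proposal lacks:
\begin{itemize}
\item If $\varphi$ misses a vertex $v$, compose it with the surjection $K_{4k/(2k-1)}-v\to C_{2k+1}$ from \Cref{lem:K_4k/2k-1ToC_2k+1}.
\item The composite is onto $C_{2k+1}$, because its image must be non-bipartite and every proper subgraph of $C_{2k+1}$ is bipartite.
\item One then reruns the computation on the blow-up of $C_{2k+1}$ with $\omega=e^{2\pi\mathrm{i}/(2k+1)}$. There $x_j\le\frac{2k-1}{6k-1}$ gives $\mathcal W\le\frac{(2k+1)^2}{(6k-1)^2}$, and one gets $|z|<\frac13$.
\end{itemize}
Your fallback, via a shortest odd cycle and Andr\'asfai--Erd\H{o}s--S\'os-type averaging, is too unspecified to fill either gap.
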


However, the constant $\frac{4}{6k-1}$ does not appear to be optimal. For sufficiently large $k$, we have the following estimate.

\begin{theorem}\label{thm:main-girth_asymptotical}
For any integer $k$ with $k\geq 22$, every $n$-vertex graph $G$ of odd girth at least $2k+1$ with $\mu_2(G)>\frac{3456n}{k^3}$ is bipartite. Moreover, the order $k^{-3}$ is asymptotically best possible.
\end{theorem}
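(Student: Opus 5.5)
Both parts of the statement rest on one spectral fact about a shortest odd cycle. The upper bound comes from a test vector built from distances to that cycle, and the lower bound from an explicit construction.

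\emph{Upper bound.} Suppose $G$ is nonbipartite with odd girth $2\ell+1\geq 2k+1$, and let $C=v_0v_1\cdots v_{2\ell}$ be a shortest odd cycle. Recall the Courant--Fischer characterization
\[
\mu_2(G)=\min_{x\perp \one,\ x\neq 0}\frac{\sum_{uv\in E}(x_u-x_v)^2}{\sum_u x_u^2},
\]
so it suffices to exhibit one test vector with small Rayleigh quotient. I would use the geometry of shortest odd cycles. Each vertex $w$ has at most two neighbours on $C$; if it has two, they are at distance $2$ on $C$ (otherwise a shorter odd cycle appears). More generally, for $w$ at distance $d$ from $C$, the vertices of $C$ within distance $d$ form an arc of length $O(d)$. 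This holds because a shortest odd cycle is an isometric subgraph, and isometry forces nearby projections to cluster.

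Define a Lipschitz "angle" map $\theta:V(G)\to C$ by sending each $w$ to a closest cycle vertex, and set
\[
x_w=f(\theta(w))\,g(d(w,C)),
\]
where $f$ is a slowly varying function around the cycle (say $f(v_j)=\cos(2\pi j/(2\ell+1))$), possibly adjusted to be orthogonal to $\one$. The function $g$ is a cutoff, and could perhaps be taken constant.

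\emph{Counting vertices and edges.} The key quantitative input is that $\mu_2>cn/k^3$ forces $\delta(G)\geq\mu_2$ by Fiedler's bound, and also forces good expansion. So I would first show that $G$ has diameter $O(k^{?})$ by a Cheeger/diameter bound: $\operatorname{diam}\le O(\sqrt{n/\mu_2}\,\log n)$ is too weak, so instead I would use the Alon--Milman bound $\operatorname{diam}\le 2\sqrt{2\Delta/\mu_2}\,\log_2 n$ or a direct argument.

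The cleaner route is to bound $\sum_{uv}(x_u-x_v)^2$ edge by edge. Adjacent vertices have projections within $O(1)$ along $C$, so each edge contributes $O(1/\ell^2)$. The sum of squares, on the other hand, is $\Theta(n)$. This gives $\mu_2\le O(|E|/(n\ell^2))$, which is only $O(\Delta/k^2)$ and loses a factor.

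To gain the third factor of $k$, I would use that in a graph of odd girth $2\ell+1$, the neighbourhoods $N(v_j)$ are pairwise disjoint for nonadjacent-at-distance-$\ge3$ cycle vertices. Hence every degree is at most about $n/\ell$: summing $|N(v_j)|$ over the cycle counts each vertex at most twice, so $\sum_j d(v_j)\le 2n$, and $\min_j d(v_j)\le 2n/(2\ell+1)$. Applying the same disjointness to edges incident to distance layers gives $|E|=O(n^2/\ell)$ in the relevant region, and thus $\mu_2=O(n/\ell^3)$.

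Making this precise is the main obstacle. Edges far from $C$ are not controlled by cycle-neighbourhood disjointness, and $\theta$ may fail to be Lipschitz there. I would handle this by truncating: set $g(d)=\max(0,1-d/\ell)$, and use an averaging argument over a family of $2\ell+1$ rotated test vectors, so that the energy is controlled by $\sum_w d(w)/\ell^2$ restricted to vertices within distance $\ell/2$ of $C$, each of degree $\le O(n/\ell)$. Tracking constants should yield $3456=2^7\cdot 27$.

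\emph{Sharpness of $k^{-3}$.} I would take a blow-up of $C_{2k+1}$ with parts of size $m$, so $n=(2k+1)m$ and the odd girth is $2k+1$. Its Laplacian eigenvalues include $2m(1-\cos(2\pi/(2k+1)))=\Theta(n/k^3)$, and $\mu_2$ equals this minimum nonzero value (the other eigenvalues are $2m$ or larger). Since the graph is nonbipartite, this shows the order $k^{-3}$ cannot be improved.
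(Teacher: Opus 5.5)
Your sharpness argument is correct: the balanced blow-up of $C_{2k+1}$ with parts of size $m$ has $\mu_2=2m(1-\cos\frac{2\pi}{2k+1})$, so $\mu_2/n=\Theta(k^{-3})$. The paper simply takes $m=1$.

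The upper bound, however, is not proved, and the route you sketch does not work as stated. First, the degree bound you rely on at the end is false. Disjointness of $N(v_i)$ and $N(v_j)$ only gives $\sum_j d(v_j)\le 2n$, i.e.\ a bound on the \emph{minimum} degree along $C$. Vertices on or next to $C$ can have degree about $n/2$.

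More seriously, no angle-based test vector can succeed when the mass is unevenly spread. Take the complete blow-up of $C_{2\ell+1}$ with part sizes $(A,A,b,\dots,b)$, where $A\approx n/2$ and $b\approx cn/\ell^3$. It has odd girth $2\ell+1$ and $\delta=2b$, so it is consistent with everything you extract from $\mu_2>cn/k^3$. Every vertex is within distance $1$ of $C$. The projection $\theta$ takes values in an arc of length $O(1)$ on the two big parts, and it differs across almost all of the roughly $A^2$ edges between them. Take $f_w=\omega^{\theta(w)}-z$ with $\omega=e^{2\pi\mathrm{i}/(2\ell+1)}$, which is what averaging over rotations amounts to. Its energy is at least about $A^2|1-\omega|^2=\Theta(n^2/\ell^2)$, while $\sum_w|f_w|^2=n(1-|z|^2)=O(n/\ell^2)$. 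So the Rayleigh quotient is of order $n$. Your assertion that the sum of squares is $\Theta(n)$ is exactly what fails. A useful test vector has to be supported on the light, thin part of the graph, and locating that bottleneck is precisely your unresolved ``main obstacle''.

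The missing idea is that odd girth is needed only to force large diameter; a general diameter bound then does all the work. You already have the key fact: a shortest odd cycle is isometric, so a connected nonbipartite $G$ of odd girth at least $2k+1$ has diameter $D\ge k$. Alon--Milman points in the wrong direction here.

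The paper proves $\mu_2(G)\le 3456n/D^3$ for every connected graph, as follows.
\begin{enumerate}
\item Take BFS layers $L_0,\dots,L_D$ from one end of a diametral pair.
\item Add all edges inside layers and between consecutive layers; this cannot decrease $\mu_2$. The graph reduces to a weighted path with masses $a_i=|L_i|/n$.
\item Let $p$ be the median layer. The test function is $1$ on $L_0,\dots,L_j$, $0$ on $L_p,\dots,L_D$, and in between drops in proportion to the resistances $1/(a_ra_{r+1})$. Its quotient is at most $\bigl[(\sum_{r=j}^{p-1}\frac{1}{a_ra_{r+1}})(\sum_{i\le j}a_i)(\sum_{i\ge p}a_i)\bigr]^{-1}$.
\item A halving induction combined with H\"older's inequality produces $j$ with $(\sum_{i\le p}a_i)(\sum_{i\le j}a_i)(\sum_{r=j}^{p-1}\frac{1}{a_ra_{r+1}})\ge p^3/216$.
\end{enumerate}
This choice of $j$ is exactly what copes with the uneven mass distribution that defeats the angular vector.
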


We also investigate a spectral analogue of the Messuti--Schacht theorem (\Cref{thm:large-girth-minimum-degree}), seeking an algebraic-connectivity condition sharper than the immediate bound $\mu_2(G)>\frac{3n}{4k}$ that forces a graph of odd girth at least $2k+1$ to admit a homomorphism to $C_{2k+1}$. At present, however, our methods do not yield any improvement over the condition obtained by
simply forcing the graph to be bipartite. We therefore collect some remarks and open questions in~\Cref{sec:Remarks} for future investigation.

\section{Preliminaries}
All graphs are finite and simple. We first introduce some families of graphs, which play important roles in our structural results.

\begin{definition}{\rm \cite{HN2004}}
\label{def:Circular clique}
For positive integers $p$ and $q$ with $p\ge 2q$, the
\emph{circular clique} $K_{p/q}$ is a graph with the vertex set
$\mathbb{Z}_p=\{0,1,\ldots,p-1\}$ in which two distinct vertices
$i,j\in\mathbb{Z}_p$ are adjacent if $q\le |i-j|\le p-q.$
\end{definition}

We first focus on circular cliques $K_{4k/(2k-1)}$ and $K_{(2k+1)/k}$ (which is isomorphic to $C_{2k+1}$); for those two families, we have the following facts.

\begin{lemma}[Lemma~6.6~\cite{HN2004}]\label{lem:K_4k/2k-1ToC_2k+1}
Let $k$ be a positive integer. 
\begin{itemize}
\setlength{\itemsep}{0em}
    \item $K_{4k/(2k-1)}$ admits no homomorphism to $C_{2k+1}$.
    \item For any vertex $v$ of $K_{4k/(2k-1)}$, $K_{4k/(2k-1)}-v$ admits a surjective homomorphism to $C_{2k+1}$.
\end{itemize}
\end{lemma}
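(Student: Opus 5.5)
The plan treats the two items separately. The first uses a ratio (no-homomorphism) argument based on independence numbers. The second uses an explicit ``rounding'' map.

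\textbf{No homomorphism to $C_{2k+1}$.} I will use the standard no-homomorphism lemma. If $G\to H$ and $H$ is vertex-transitive, then $\alpha(G)/|V(G)|\ge \alpha(H)/|V(H)|$. This follows by pulling back the $|V(H)|$ rotated copies of a maximum independent set of $H$ and averaging.

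Both $K_{4k/(2k-1)}$ and $C_{2k+1}\cong K_{(2k+1)/k}$ are circulant, hence vertex-transitive. So it suffices to know $\alpha(K_{p/q})=q$, which I will check directly.
\begin{itemize}
\item The lower bound: an arc $\{i,i+1,\dots,i+q-1\}$ is independent, since all its differences are below $q$.
\item The upper bound: take an independent set $I$ and $i\in I$. Any other element of $I$ lies within circular distance $q-1$ of $i$. An extremal-arc argument then shows $I$ fits in an arc of length $q$. Concretely, let $i$ be the ``leftmost'' element; all other elements lie in $i+[1,q-1]$.
\end{itemize}
Thus $\alpha(K_{4k/(2k-1)})/(4k)=\frac{2k-1}{4k}$ and $\alpha(C_{2k+1})/(2k+1)=\frac{k}{2k+1}$. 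Since $4k^2>(2k-1)(2k+1)=4k^2-1$, we get $\frac{2k-1}{4k}<\frac{k}{2k+1}$. This contradicts the lemma, so no homomorphism exists.

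\textbf{$K_{4k/(2k-1)}-v$ maps onto $C_{2k+1}$.} By vertex-transitivity I may choose which vertex to delete. I model $C_{2k+1}$ as $K_{(2k+1)/k}$, where $a\sim b$ iff $a-b\equiv \pm k \pmod{2k+1}$. I will try the rounding map
$$f(i)=\left\lfloor \frac{(2k+1)\,i}{4k}\right\rfloor \qquad (0\le i\le 4k-1),$$
and then delete the one vertex at which the estimate below fails.

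\emph{Surjectivity.} The slope $\frac{2k+1}{4k}$ is less than $1$, so consecutive values of $f$ differ by $0$ or $1$. Also $f(0)=0$ and $f(4k-1)=2k$, so $f$ hits every residue. I must make sure the deleted vertex is not the unique preimage of some value. Every value other than possibly $0$ and $2k$ has at least one other preimage, since $4k$ points map onto $2k+1$ values, so this should be easy to arrange.

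\emph{Edges.} For an edge $\{i,j\}$ with $j-i\equiv d\in\{2k-1,2k,2k+1\}\pmod{4k}$, the difference $f(j)-f(i)$ (computed in $\mathbb{Z}$, with a wrap-around correction of $2k+1$ when $j$ passes $4k$) lies in $\bigl\{\lfloor d(2k+1)/4k\rfloor,\ \lfloor d(2k+1)/4k\rfloor+1\bigr\}$. The three cases are:
\begin{itemize}
\item $d=2k$: the quantity $k+\tfrac12$ gives $k$ or $k+1$, which is fine.
\item $d=2k+1$: the quantity $k+\tfrac{4k+1}{4k}$ gives $k+1$ or $k+2$.
\item $d=2k-1$: the quantity $k-\tfrac1{4k}$ gives $k-1$ or $k$.
\end{itemize}
So the bad outcomes are a difference of $k+2$ or $k-1$, and these need fractional parts to align in an extreme way. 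I will show that this happens only for edges incident to a single vertex, namely where the wrap-around between $4k-1$ and $0$ loses the extra $\frac{1}{4k}$, and delete that vertex.

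\textbf{Main obstacle.} This fractional-part bookkeeping in the second item is where I expect the real difficulty. If $f$ does not localize the bad edges at one vertex, I will instead reindex. Deleting vertex $4k-1$ leaves the vertices $0,\dots,4k-2$, and I will map $i\mapsto \lfloor (2k+1)i/(4k-1)\rfloor$ with a small shift. The two rounding errors then combine into a total deficit of less than $1$, forcing every edge difference into $\{k,k+1\}$. I would then recheck surjectivity in the same way as above.
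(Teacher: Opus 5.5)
The paper quotes this lemma from Hell--Ne\v{s}et\v{r}il without proof, so your argument has to stand on its own. The overall strategy is sound and can be completed: an independence-ratio argument for the first item and a rounding map for the second. However, one step is false as written, and the decisive computation in the second item is missing.

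\textbf{First item.} Your justification of $\alpha(K_{p/q})\le q$ fails, because an independent set need not lie in an arc of $q$ consecutive vertices. For $k\ge 3$, take $\{0,2k-2,4k-4\}$ in $K_{4k/(2k-1)}$. Its circular distances are $2k-2$, $2k-2$ and $4$, so it is independent. Yet it lies in no arc of $2k-1$ consecutive vertices, and no ``leftmost'' element exists.

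This matters because the upper bound is needed precisely for $K_{4k/(2k-1)}$; for $C_{2k+1}$ the easy lower bound $\alpha\ge k$ suffices. It is also needed with no slack, since an independent set of size $2k$ would give ratio $\tfrac12>\tfrac{k}{2k+1}$. The bound itself is true, and a correct proof is short:
\begin{itemize}
\item Fix $i\in I$. Every other element of $I$ lies among the $2q-2$ distinct residues $i\pm t$ with $1\le t\le q-1$.
\item These residues split into the $q-1$ pairs $\{i+t,\,i+t-q\}$.
\item Each pair is an edge, since its circular distance is exactly $q$ (this uses $p\ge 2q$).
\item Hence $|I|\le 1+(q-1)=q$.
\end{itemize}

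\textbf{Second item.} Your map works, but the bookkeeping you defer is the whole proof, so it must be done. Explicitly, $f(i)=\lfloor i/2\rfloor$ for $0\le i\le 2k-1$ and $f(i)=\lceil i/2\rceil$ for $2k\le i\le 4k-1$.

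In your fractional-part language:
\begin{itemize}
\item The outcome $k-1$ for $d=2k-1$ requires $(2k+1)i\equiv 0\pmod{4k}$, i.e.\ $i=0$.
\item The outcome $k+2$ for $d=2k+1$ requires $(2k+1)i\equiv -1\pmod{4k}$, i.e.\ $i=2k-1$, because $(2k+1)(2k-1)\equiv -1 \pmod{4k}$.
\end{itemize}
Both conditions describe the single edge $\{0,2k-1\}$, whose images are $0$ and $k-1$. All other edges land on differences $k$ or $k+1$. So delete vertex $0$; deleting $2k-1$ also works.

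Your surjectivity remark is wrong, however. Counting $4k$ points onto $2k+1$ values does not locate the values with a single preimage. Here those values are $k$ (attained only at $2k$) and $2k$ (attained only at $4k-1$), while $0$ is attained at both $0$ and $1$. Since the deleted vertex is $0$, surjectivity survives. Your fallback reindexing is therefore unnecessary.
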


We focus on another special circular clique $K_{(3k-1)/k}$, which is well-known as \emph{the Andr\'asfai graph} $\Gamma_k$ \cite{A1962, LPR2021}. In particular, $\Gamma_1\equiv K_2$ and $\Gamma_2\equiv C_5$. For $k\geq 2$, the Andr\'asfai graph $\Gamma_k$ is not bipartite. Moreover, the \emph{Vega graphs} form another important family.

\begin{definition}{\rm \cite{A1962, LPR2021}}\label{def:vega}
Let $i\geq 2$ be an integer, and let $\mu,\nu\in\{0,1\}$. The \emph{Vega graph} $\Upsilon_i^{\mu\nu}$ is constructed as follows: Start with the Andr\'asfai graph $\Gamma_i$, where
$V(\Gamma_i)=\{v_0,\ldots,v_{3i-2}\},$
and add eight new vertices $a,b,c,u,v,w,x,y$. Add the edge $xy$, and add edges so that the vertices $a,v,c,u,b,w$, in this cyclic order, induce a $6$-cycle. Join $x$ to $a,b,c$ and $y$ to $u,v,w$. Moreover, join each of $a$ and $u$ to every vertex in $\{v_0,\ldots,v_{i-1}\}$, each of $b$ and $v$ to every vertex in $\{v_i,\ldots,v_{2i-1}\}$, and each of $c$ and $w$ to every vertex in $\{v_{2i},\ldots,v_{3i-2}\}$. Finally, delete $y$ if $\mu=1$, and delete $v_{2i-1}$ if $\nu=1$.
\end{definition}

For example, see~\Cref{fig:Vega211}, where two different labelings of $\Upsilon_2^{11}$ (also known as Gr\"{o}tzsch graph) are shown.

\begin{figure}[H]
\centering
\resizebox{0.85\textwidth}{!}{
\begin{tikzpicture}[
    scale=0.88,
    every node/.style={font=\small},
    vtx/.style={circle,fill=black,inner sep=1.8pt},
    delvtx/.style={circle,draw=black,fill=gray,inner sep=1.8pt},
    six/.style={line width=0.9pt},
    five/.style={line width=0.9pt},
    link/.style={line width=0.9pt},
    deletededge/.style={line width=0.9pt,dashed},
    deletedfive/.style={line width=0.9pt,dashed}
]

\begin{scope}[xshift=-8.3cm]
\coordinate (La) at (90:4.0);
\coordinate (Lv) at (30:4.0);
\coordinate (Lc) at (-30:4.0);
\coordinate (Lu) at (-90:4.0);
\coordinate (Lb) at (-150:4.0);
\coordinate (Lw) at (150:4.0);
\coordinate (Lv3) at (90:1.55);
\coordinate (Lv1) at (162:1.55);
\coordinate (Lv4) at (234:1.55);
\coordinate (Lv2) at (306:1.55);
\coordinate (Lv0) at (18:1.55);
\coordinate (Ly) at (0,5.55);
\coordinate (Lx) at (0,-5.55);

\node[delvtx,label=above:$v_3(\text{deleted})$] at (Lv3) {};
\node[delvtx,label=above:$y(\text{deleted})$] at (Ly) {};

\node[vtx,label=above left:$a$] at (La) {};
\node[vtx,label=below right:$v$] at (Lv) {};
\node[vtx,label=above right:$c$] at (Lc) {};
\node[vtx,label=below:$u$] at (Lu) {};
\node[vtx,label=above left:$b$] at (Lb) {};
\node[vtx,label=below left:$w$] at (Lw) {};
\node[vtx,label=left:$v_1$] at (Lv1) {};
\node[vtx,label=below left:$v_4$] at (Lv4) {};
\node[vtx,label=below right:$v_2$] at (Lv2) {};
\node[vtx,label=right:$v_0$] at (Lv0) {};
\node[vtx,label=below:$x$] at (Lx) {};

\draw[six]
(La)--(Lv)--(Lc)--(Lu)--(Lb)--(Lw)--cycle;

\draw[five]
(Lv0)--(Lv2)--(Lv4)--(Lv1);

\draw[deletedfive]
(Lv1)--(Lv3)--(Lv0);

\draw[link]
(La) .. controls (1.25,3.25) and (1.85,2.20) .. (Lv0);

\draw[link]
(La) .. controls (-1.25,3.25) and (-1.85,2.20) .. (Lv1);

\draw[link]
(Lu) .. controls (2.35,-3.05) and (2.40,0.10) .. (Lv0);

\draw[link]
(Lu) .. controls (-2.35,-3.05) and (-2.40,0.10) .. (Lv1);

\draw[link]
(Lb) .. controls (-2.65,-2.40) and (-0.15,-2.05) .. (Lv2);

\draw[deletededge]
(Lb) .. controls (-3.00,0.20) and (-1.90,2.15) .. (Lv3);

\draw[deletededge]
(Lv) .. controls (2.70,2.55) and (1.25,2.15) .. (Lv3);

\draw[link]
(Lv) .. controls (2.85,0.20) and (2.05,-0.75) .. (Lv2);

\draw[link]
(Lc) .. controls (2.65,-2.40) and (0.15,-2.05) .. (Lv4);

\draw[link]
(Lw) .. controls (-2.95,1.10) and (-2.15,-0.65) .. (Lv4);

\draw[link]
(Lx)
.. controls (1.80,-5.35) and (3.20,-3.85) ..
(Lc);

\draw[link]
(Lx)
.. controls (-1.80,-5.35) and (-3.20,-3.85) ..
(Lb);

\draw[link]
(Lx)
.. controls (6.35,-5.10) and (6.35,4.15) ..
(La);

\draw[deletededge]
(Ly)
.. controls (1.75,5.40) and (3.15,3.75) ..
(Lv);

\draw[deletededge]
(Ly)
.. controls (-1.75,5.40) and (-3.15,3.75) ..
(Lw);

\draw[deletededge]
(Ly)
.. controls (6.25,4.55) and (6.25,-4.10) ..
(Lu);

\draw[deletededge]
(Ly)
.. controls (-6.25,4.70) and (-6.25,-4.70) ..
(Lx);

\end{scope}

\begin{scope}[xshift=4.9cm, scale=1.4, transform shape]

\coordinate (Rp0) at (0,3.25);
\coordinate (Rp1) at (2.85,1.25);
\coordinate (Rp2) at (1.75,-1.95);
\coordinate (Rp3) at (-1.75,-1.95);
\coordinate (Rp4) at (-2.85,1.25);

\coordinate (Rq0) at (0,1.75);
\coordinate (Rq1) at (1.28,0.78);
\coordinate (Rq2) at (0.88,-0.65);
\coordinate (Rq3) at (-0.88,-0.65);
\coordinate (Rq4) at (-1.28,0.78);

\coordinate (Rs) at (0,0.28);

\draw[link] (Rp0)--(Rp1)--(Rp2)--(Rp3)--(Rp4)--(Rp0);

\draw[link] (Rq0)--(Rp4) (Rq0)--(Rp1);
\draw[link] (Rq1)--(Rp0) (Rq1)--(Rp2);
\draw[link] (Rq2)--(Rp1) (Rq2)--(Rp3);
\draw[link] (Rq3)--(Rp2) (Rq3)--(Rp4);
\draw[link] (Rq4)--(Rp3) (Rq4)--(Rp0);

\draw[link] (Rs)--(Rq0);
\draw[link] (Rs)--(Rq1);
\draw[link] (Rs)--(Rq2);
\draw[link] (Rs)--(Rq3);
\draw[link] (Rs)--(Rq4);

\node[vtx,label=above:$p_0$] at (Rp0) {};
\node[vtx,label=right:$p_1$] at (Rp1) {};
\node[vtx,label=below right:$p_2$] at (Rp2) {};
\node[vtx,label=below left:$p_3$] at (Rp3) {};
\node[vtx,label=left:$p_4$] at (Rp4) {};

\node[vtx,label=above:$q_0$] at (Rq0) {};
\node[vtx,label=right:$q_1$] at (Rq1) {};
\node[vtx,label=right:$q_2$] at (Rq2) {};
\node[vtx,label=left:$q_3$] at (Rq3) {};
\node[vtx,label=left:$q_4$] at (Rq4) {};

\node[vtx,label=below:$s$] at (Rs) {};

\end{scope}

\end{tikzpicture}
}

\caption{Two drawings of the graph $\Upsilon_2^{11}$ with $p_0=v_2,p_1=b,p_2=u,p_3=c,p_4=v_4, q_0=w,q_1=v_0,q_2=x,q_3=v_1,q_4=v,$ and $s=a$}
\label{fig:Vega211}
\end{figure}

We then provide some well-known facts about the algebraic connectivity of a graph $G$.

\begin{lemma}[Fiedler~\cite{F1973}]\label{lem:Fiedler}
For any non-complete graph $G$, $\mu_2(G)\leq \delta(G)$.
\end{lemma}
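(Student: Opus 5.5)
We will prove Fiedler's inequality $\mu_2(G)\le\delta(G)$ for non-complete $G$ using the Courant--Fischer characterization of $\mu_2$ applied to the complement. Write $L=L(G)$ and $\overline{G}$ for the complement of $G$. Since $L(G)+L(\overline{G})=L(K_n)=nI-J$, and every $x\perp\mathbf{1}$ satisfies $Jx=0$, the quadratic forms satisfy
\[
x^\top L(G)x = n\|x\|^2 - x^\top L(\overline{G})x \quad\text{for all } x\perp\mathbf{1}.
\]
Taking the minimum over unit vectors $x\perp\mathbf{1}$ gives $\mu_2(G)=n-\mu_n(\overline{G})$.

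It therefore suffices to show that $\mu_n(\overline{G})\ge \Delta(\overline{G})+1$ whenever $\overline{G}$ has at least one edge, which holds because $G$ is non-complete. Since $\Delta(\overline{G})=n-1-\delta(G)$, this yields
\[
\mu_2(G)\le n-(n-\delta(G)) = \delta(G).
\]

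For the bound on $\mu_n$, let $H=\overline{G}$ and let $v$ be a vertex of maximum degree $\Delta\ge1$ in $H$. Choose the test vector $x$ with $x_v=\Delta$, $x_u=-1$ for each neighbour $u$ of $v$, and $x_w=0$ otherwise. Then $\|x\|^2=\Delta^2+\Delta$. The Laplacian quadratic form is
\[
x^\top L(H)x=\sum_{uw\in E(H)}(x_u-x_w)^2 .
\]
The $\Delta$ edges at $v$ contribute $(\Delta+1)^2$ each. Edges among neighbours contribute $0$, and edges from a neighbour to a vertex outside $N[v]$ contribute $1$ each. Hence
\[
x^\top L(H)x\ge \Delta(\Delta+1)^2 .
\]
By Rayleigh, $\mu_n(H)\ge \Delta(\Delta+1)^2/(\Delta(\Delta+1))=\Delta+1$.

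The only delicate point is the first step: the complement identity, which requires restricting to $x\perp\mathbf{1}$ so that $J$ vanishes. We also use that the $\mathbf{1}$-eigenvector is shared by both Laplacians, so the spectra pair up as $\mu_i(G)=n-\mu_{n+2-i}(\overline{G})$ for $i\ge2$. The remaining steps are routine Rayleigh-quotient estimates.
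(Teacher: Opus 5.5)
Your proof is correct. The identity $\mu_2(G)=n-\mu_n(\overline{G})$ holds because $\mathbf{1}$ lies in the kernel of both $L(G)$ and $L(\overline{G})$. So the maximum of the Rayleigh quotient of $L(\overline{G})$ over $\mathbf{1}^\perp$ is still $\mu_n(\overline{G})$. Your star test vector then proves the classical (Grone--Merris) bound $\mu_n(H)\ge\Delta(H)+1$, which together with $\Delta(\overline{G})=n-1-\delta(G)$ gives the claim.

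The paper gives no proof of this lemma; it only cites Fiedler, whose actual result is the stronger chain $\mu_2(G)\le\kappa(G)\le\delta(G)$. The paper's proof of Theorem~\ref{thm:main} does, however, contain a self-contained argument of that type. Suppose $S$ separates $A$ from $B$, with no $A$--$B$ edges. The vector equal to $1/|A|$ on $A$, $-1/|B|$ on $B$ and $0$ on $S$ has Rayleigh quotient $\bigl(e(A,S)/|A|^2+e(B,S)/|B|^2\bigr)/\bigl(1/|A|+1/|B|\bigr)\le|S|$. Taking $A=\{v\}$ for a minimum-degree vertex $v$, $S=N(v)$ and $B=V(G)\setminus N[v]$ (nonempty since $G$ is non-complete) gives exactly $\mu_2(G)\le\delta(G)$.

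Your route requires passing to the complement and two extremal-eigenvalue estimates. In return it gives the exact relation between the spectra of $G$ and $\overline{G}$, and a lower bound on $\mu_n$ that is useful in its own right. The cut route uses a single test vector on $G$ itself and yields the stronger bound by $\kappa(G)$. Its equality case, $e(A,S)=|A||S|$ and $e(B,S)=|B||S|$, is precisely what the paper uses to exclude $\mu_2(G)=\delta(G)=\frac{n}{3}$.
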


\begin{lemma}[Lemma~3.2.1~\cite{BH2012}]
\label{lem:edge-monotone}
For any two graphs $G$ and $G'$ with $V(G)=V(G')$ and $E(G)\subseteq E(G')$, for each $j\in \{1,\ldots, |V(G)|\}$, we have that $\mu_j(G)\le \mu_j(G')$.
\end{lemma}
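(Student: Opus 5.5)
The approach is the Courant--Fischer characterisation of the Laplacian eigenvalues. We have
\[
\mu_j(G)=\min_{\substack{U\subseteq\mathbb R^n\\ \dim U=j}}\ \max_{0\neq x\in U}\frac{x^{\top}L(G)x}{x^{\top}x},
\]
and the quadratic form is
\[
x^{\top}L(G)x=\sum_{uv\in E(G)}(x_u-x_v)^2 .
\]
The key observation is that adding edges can only increase the form. Since $E(G)\subseteq E(G')$, every term in the sum for $G$ also appears in the sum for $G'$, and the extra terms are squares. Hence
\[
x^{\top}L(G)x\le x^{\top}L(G')x \quad\text{for every } x\in\mathbb R^n,
\]
that is, $L(G')-L(G)=L(H)$ is positive semidefinite, where $H$ is the spanning graph with edge set $E(G')\setminus E(G)$.

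The plan has three steps.
\begin{enumerate}
\item Write $L(G')=L(G)+L(H)$. Note that $L(H)$ is positive semidefinite, because its quadratic form is a sum of squares $(x_u-x_v)^2$.
\item Fix $j$ and choose a $j$-dimensional subspace $U$ that attains the minimum in the Courant--Fischer formula for $L(G')$. On $U$, the Rayleigh quotient of $L(G)$ is pointwise at most that of $L(G')$.
\item Therefore
\[
\mu_j(G)\le \max_{0\neq x\in U}\frac{x^{\top}L(G)x}{x^{\top}x}\le \max_{0\neq x\in U}\frac{x^{\top}L(G')x}{x^{\top}x}=\mu_j(G').
\]
Equivalently, one can cite Weyl's inequality, $\lambda_j(A+B)\ge\lambda_j(A)$ whenever $B\succeq 0$.
\end{enumerate}

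There is no real obstacle here. The only point needing care is stating the min--max principle with the eigenvalues ordered increasingly, $\mu_1\le\cdots\le\mu_n$, so that the inequality goes in the right direction.
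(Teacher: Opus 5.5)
Your proof is correct: writing $L(G')=L(G)+L(H)$ with $L(H)$ positive semidefinite and then applying the Courant--Fischer min--max principle (or Weyl's inequality) is the standard argument. The paper gives no proof of its own here and simply cites the result from Brouwer--Haemers, so your approach is essentially the same.
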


\begin{lemma}\label{lem:precise-alg-connectivity}
The following claims hold.
\begin{enumerate}[label=(\roman*)]
\setlength{\itemsep}{0em}
    \item\label{lem:Kab} {\rm \cite{F1973}} For any positive integers $s,t$ with $s\ge t$ and $s\geq 2$, $\mu_2(K_{s,t})=t$. 
    \item\label{lem:C_2k+1} {\rm \cite{BH2012}} For each integer $k$ with $k\geq 2$, $\mu_2(C_{2k+1})=2-2\cos\Big(\frac{2\pi}{2k+1}\Big)$.
\end{enumerate}
\end{lemma}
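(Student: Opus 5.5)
The statement consists of two classical facts: (i) for $s\ge t$, $s\ge 2$, the complete bipartite graph satisfies $\mu_2(K_{s,t})=t$, and (ii) for $k\ge 2$, $\mu_2(C_{2k+1})=2-2\cos\big(\frac{2\pi}{2k+1}\big)$. The plan is to compute the full Laplacian spectrum in both cases and read off the second smallest eigenvalue.

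For \ref{lem:Kab}, write $V=S\cup T$ with $|S|=s$ and $|T|=t$. Then $L(K_{s,t})=\begin{pmatrix} tI_s & -J_{s\times t}\\ -J_{t\times s} & sI_t\end{pmatrix}$. We exhibit an eigenbasis as follows.
\begin{itemize}
\item Any vector supported on $S$ with coordinate sum zero is killed by $J_{t\times s}$, so it is an eigenvector with eigenvalue $t$. These vectors span a space of dimension $s-1$.
\item Similarly, vectors supported on $T$ with zero sum give eigenvalue $s$, with multiplicity $t-1$.
\item The remaining two eigenvectors lie in the span of $(\one_S,0)$ and $(0,\one_T)$. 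On this span, $L$ acts by the $2\times 2$ matrix $\begin{pmatrix} t & -t\\ -s & s\end{pmatrix}$, whose eigenvalues are $0$ and $s+t$.
\end{itemize}
Hence the spectrum is $\{0,\ t^{(s-1)},\ s^{(t-1)},\ s+t\}$. Since $s\ge 2$, the eigenvalue $t$ actually occurs, and $t\le s<s+t$, so $\mu_2=t$. One edge case needs a check: if $t=1$, the value $s$ has multiplicity $0$, which does not affect the conclusion.

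For \ref{lem:C_2k+1}, let $n=2k+1$. The Laplacian $L(C_n)=2I-P-P^{-1}$, where $P$ is the cyclic shift, is a circulant matrix. Its eigenvectors are the Fourier vectors $x_j=(\omega^{jm})_{m\in\Z_n}$ with $\omega=e^{2\pi i/n}$, and the corresponding eigenvalues are
\[
2-\omega^j-\omega^{-j}=2-2\cos\Big(\frac{2\pi j}{n}\Big),\qquad j=0,\dots,n-1.
\]
The function $j\mapsto 2-2\cos(2\pi j/n)$ vanishes only at $j=0$ and is smallest among $j\neq 0$ at $j=1$ and $j=n-1$. So $\mu_2(C_n)=2-2\cos(2\pi/n)$, as claimed. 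If real eigenvectors are preferred, the pairs $\cos(2\pi jm/n)$ and $\sin(2\pi jm/n)$ serve instead.

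Neither part poses a real obstacle. The only points requiring care are tracking multiplicities, so that the $n$ listed eigenvalues are accounted for exactly once each, and checking the degenerate cases $t=1$ in \ref{lem:Kab} and the ordering of cosine values in \ref{lem:C_2k+1}.
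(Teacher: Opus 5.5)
Your proof is correct. The paper does not prove this lemma; it cites Fiedler and Brouwer--Haemers, where both facts come from exactly this kind of explicit eigenbasis computation: zero-sum vectors on each side together with the $2\times 2$ quotient for $K_{s,t}$, and the Fourier vectors of the circulant $2I-P-P^{-1}$ for $C_{2k+1}$. Your decomposition in (i) is also just \Cref{lem:blowup-reduction-rk} specialized to $F=K_2$, and your remark that $s\ge 2$ guarantees the eigenvalue $t$ occurs is what rules out $K_{1,1}$, where $\mu_2=2$.
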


\subsection{The normalized quotient Laplacian matrix of complete blow-ups}

Let $F$ be a graph with the vertex set $\{u_1,\ldots,u_r\}$ and let
$n_1,\ldots,n_{r}$ be positive integers. An $n$-vertex \emph{complete blow-up} $H=F(n_1,\ldots,n_{r})$ of $F$
is defined as follows: $ V(H)=\bigcup_{i=1}^{r}V_i$ and $ E(H)=\bigcup_{u_iu_j\in E(F)}\{uv:u\in V_i,\ v\in V_j\}$,
where $V_1,\ldots,V_{r}$ (called the \emph{parts} of $H$) are pairwise disjoint sets with $|V_i|=n_i$ and $\sum_{i=1}^r n_i=n$. For $i\in \{1,\ldots, r\}$, the value $\frac{n_i}{n}$ is called the \emph{normalized part size}. 
For each $u_i\in V(F)$, let $\boldsymbol{x}:=(x_{u_i})_{u_i\in V(F)}$ be a vector with $x_{u_i}=\frac{n_i}{n}$. 
Note that $x_{u_i}>0$ and $\sum_{i=1}^r x_{u_i}=1.$ Let $A$ be the adjacency matrix of $F$. For each vertex $u_i\in V(F)$, the \emph{normalized part degree $d_{u_i}$} of $u_i$ in $H$ is defined as follows:
$$
d_{u_i}:=\sum_{u_j\in N_F(u_i)}x_{u_j}=(A\boldsymbol{x})_i.
$$
Equivalently, every vertex in $V_i$ has degree $d_{u_i}n$ in $H$. Moreover, for each $u_i\in V(F)$, let
$\boldsymbol{b}_i:=\frac{\boldsymbol{1}_{V_i}}{\sqrt{x_{u_i} n}}.$
The vectors $\{\boldsymbol{b}_i:u_i\in V(F)\}$ form an orthonormal basis
of the subspace of vectors that are constant on every part. With respect to this basis, the restriction of $\frac{1}{n}L(H)$ is the symmetric matrix $Q:=(Q_{uv})_{u,v\in V(F)}$, where
$$Q_{uv}
    =
    \begin{cases}
        d_u             & v=u,\\
        -\sqrt{x_ux_v}, & uv\in E(F),\\
        0,              & uv\notin E(F).
    \end{cases}
$$
This matrix $Q$ is called the \emph{normalized quotient Laplacian matrix} of $H$. We now give a formal definition equivalent to the description.

\begin{definition}
For a connected graph $F$ and its $n$-vertex complete blow-up $H$, the \emph{normalized quotient Laplacian matrix} $Q$ of $H$, is defined as follows:
$$
Q=\operatorname{diag}(d_u:u\in V(F))-D_{\sqrt{x}}AD_{\sqrt{x}},\text{~~where~~} D_{\sqrt{x}}:=\operatorname{diag}(\sqrt{x_u}:u\in V(F)).
$$
The \emph{eigenvalues of $Q$} are denoted by $\theta_1(Q), \theta_2(Q),\ldots, \theta_{|V(F)|}(Q)$ with
$\theta_1(Q)\leq\theta_2(Q)\le\cdots\le\theta_{|V(F)|}(Q).$
\end{definition}

Indeed, for every vector $\boldsymbol{y}=(y_u)_{u\in V(F)}$, we have
$\boldsymbol{y}^{T}Q\boldsymbol{y}
=\sum_{uv\in E(F)}(\sqrt{x_v}y_u-\sqrt{x_u}y_v)^2\geq 0.$
Thus, $Q$ is positive semidefinite. Moreover, the equality holds if and only if $\frac{y_u}{\sqrt{x_u}}=\frac{y_v}{\sqrt{x_v}}$ for every edge $uv\in E(F)$. Since $F$ is connected, this ratio must be constant on $V(F)$. Consequently,
$\ker Q=\operatorname{span}\{
(\sqrt{x_u})_{u\in V(F)}
\},$ and thus $\dim\ker Q=1$. It follows that $0$ is an eigenvalue of $Q$ (of multiplicity $1$), and, therefore,
$0=\theta_1(Q)< \theta_2(Q)\leq\cdots\leq\theta_{|V(F)|}(Q).$ We next establish the following relation between $\mu_2(H)$ and $\theta_2(Q)$.

\begin{lemma}\label{lem:blowup-reduction-rk}
Let $H$ be an $n$-vertex complete blow-up of a connected graph $F$ with normalized part sizes $(x_u)_{u\in V(F)}$. Let $d_u$ denote the normalized part degree of $u$ such that $d_u=\sum_{v\in N_F(u)}x_v$. Then 
$$\operatorname{Spec}\big(\frac1nL(H)\big)
=\operatorname{Spec}(Q)\uplus \big(\biguplus_{u\in V(F)}\{d_u\}^{x_un-1}\big).$$
Consequently,
$\frac{\mu_2(H)}{n}=\min\big\{\theta_2(Q), \min\limits_{\substack{u\in V(F)\\ x_un
\geq 2}}d_u\big\}.$ In particular, $\frac{\mu_2(H)}{n}\le \theta_2(Q)$.
\end{lemma}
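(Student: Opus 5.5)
The plan is to split $\mathbb{R}^{V(H)}$ orthogonally into the $r$-dimensional subspace $W$ of vectors constant on every part and its complement $W^\perp$. Then we show that $\frac1n L(H)$ leaves both subspaces invariant, acting as $Q$ on $W$ and diagonally on $W^\perp$.

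First, $W^\perp$ is the direct sum over $u\in V(F)$ of the spaces $Z_u:=\{\boldsymbol z\in\mathbb{R}^{V(H)}:\ \operatorname{supp}\boldsymbol z\subseteq V_u,\ \sum_{w\in V_u}z_w=0\}$, each of dimension $x_un-1$. Take $\boldsymbol z\in Z_u$ and a vertex $w$. If $w\in V_u$, then $(L(H)\boldsymbol z)_w=d_G(w)z_w-\sum_{w'\sim w}z_{w'}=d_un\,z_w$, because $V_u$ is independent in $H$: $F$ is simple, so there are no edges inside a part. If $w\notin V_u$, then $w$ is adjacent either to all of $V_u$ or to none of it, so $(L(H)\boldsymbol z)_w=-\sum_{w'\in V_u}z_{w'}$ or $0$, and both vanish. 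Hence every nonzero vector of $Z_u$ is an eigenvector of $\frac1nL(H)$ with eigenvalue $d_u$. This gives the multiset $\{d_u\}^{x_un-1}$ for each $u$.

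Second, since $L(H)$ is symmetric and preserves $W^\perp$, it also preserves $W$. Its matrix in the orthonormal basis $\{\boldsymbol b_u\}$ has entries $\frac1n\boldsymbol b_u^TL(H)\boldsymbol b_v$. We compute these directly. The diagonal entry is $\frac1n\cdot\frac{1}{x_un}\cdot(\text{sum of degrees in }V_u)=\frac1n\cdot\frac{x_un\cdot d_un}{x_un}=d_u$. For $uv\in E(F)$, the off-diagonal entry is $-\frac1n\cdot\frac{|V_u||V_v|}{\sqrt{x_unx_vn}}=-\sqrt{x_ux_v}$. It is $0$ otherwise. This is exactly $Q$, so the spectrum on $W$ is $\operatorname{Spec}(Q)$. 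Taking the union of the two orthogonal pieces yields the stated multiset identity.

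For the consequence, the smallest eigenvalue of the multiset is $0$, and it appears exactly once: $\theta_1(Q)=0$ is simple as shown above, and every $d_u>0$ because $F$ is connected and all $x_v>0$. Assume $|V(F)|\ge 2$, which is implicit in the connectivity setup. The second smallest element is then the minimum of $\theta_2(Q)$ and those $d_u$ that actually occur, namely those with multiplicity $x_un-1\ge1$, i.e.\ $x_un\ge2$. Since $\mu_2(H)/n$ is that element, the formula follows, and in particular $\mu_2(H)/n\le\theta_2(Q)$.

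The only mildly delicate points are the independence of each part, which is needed for the diagonal action on $Z_u$, and the bookkeeping of which $d_u$ actually appear. Neither is a real obstacle.
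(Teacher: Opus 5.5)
Your proof is correct and follows essentially the same route as the paper: you decompose $\mathbb{R}^{V(H)}$ orthogonally into the part-constant subspace, on which $\frac1n L(H)$ acts as $Q$, and the zero-sum-on-a-part subspaces $Z_u$, which are eigenspaces for $d_u$. You also fill in two details the paper leaves implicit: deriving the invariance of $W$ from the symmetry of $L(H)$, and checking that the $d_u$ which actually occur in the spectrum are exactly those with $x_un\ge 2$.
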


\begin{proof}
For each $u\in V(F)$, let $V_u$ denote the part of $H$ corresponding to $u$, let $\boldsymbol{f}_u:=\frac{\one_{V_u}}{\sqrt{|V_u|}}$ (where $|V_u|=x_un$), and let $U$ be the subspace spanned by
the vectors $\boldsymbol{f}_u$. As shown above, the subspace $U$ is $L(H)$-invariant, and with respect to the orthonormal basis $(\boldsymbol{f}_u)_{u\in V(F)}$, the restriction of $\frac{1}{n}L(H)$ to $U$ is represented by $Q$.

For each $u\in V(F)$, let
$W_u=\{\boldsymbol{x}\in\mathbb R^{V(H)}:
\operatorname{supp}(\boldsymbol{x})\subseteq V_u,\ 
\sum_{v\in V_u}x_v=0\}$. Then we have the following orthogonal decomposition of $\mathbb R^{V(H)}$:
$$\mathbb R^{V(H)}=U\oplus(\bigoplus_{u\in V(F)}W_u).$$ We claim that for each $u\in V(F)$, $W_u$ is an eigenspace of $\frac{1}{n}L(H)$ corresponding to the eigenvalue $d_u$, with dimension $|V_u|-1$. Let $\boldsymbol{x}$ be an arbitrary vector of $W_u$. For every $z\in V_u$, since $V_u$ is an independent vertex set and $\boldsymbol{x}$ vanishes outside $V_u$, $(L(H)\boldsymbol{x})_z=nd_u x_z$. For every $z\in V_w$ with $w\neq u$, we have that $(L(H)\boldsymbol{x})_z=0$. Indeed, if $uw\notin E(F)$, then it is trivially true; if $uw\in E(F)$, then $(L(H)\boldsymbol{x})_z=-\sum_{v\in V_u}x_v=0$. Hence, $L(H)\boldsymbol{x}=nd_u\boldsymbol{x}$. We prove the claim.

Together with the eigenvalues of $\frac{1}{n}L(H)$ on $U$, which are precisely the eigenvalues of $Q$, this yields the stated
spectrum decomposition and thus the estimate for $\mu_2(H)$.
\end{proof}

\subsection{Courant--Fischer variational principle for complex vectors}

In later proofs, we shall need the following extension of the Courant-Fischer variational principle (Theorem~2.4.1~\cite{BH2012}) to complex vectors.

\begin{lemma}\label{lem:complex-rayleigh}
Let $G$ be an $n$-vertex graph and let
$\boldsymbol{y}\in\mathbb C^n\setminus\{\mathbf 0\}$ be a complex vector.  If
$\langle\boldsymbol{y}^*, \one\rangle=0$, then
$$
 \mu_2(G)\le \frac{\boldsymbol{y}^*L(G)\boldsymbol{y}}{\langle\boldsymbol{y}^*,\boldsymbol{y}\rangle},
$$
where $\boldsymbol{y}^*$ denotes the conjugate transpose of $\mathbf{y}$. 
\end{lemma}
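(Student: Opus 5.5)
The plan is to reduce the complex Rayleigh quotient to the real one. Write $\boldsymbol{y}=\boldsymbol{a}+i\boldsymbol{b}$ with $\boldsymbol{a},\boldsymbol{b}\in\mathbb R^n$. The hypothesis $\langle\boldsymbol{y}^*,\one\rangle=\sum_j \overline{y_j}=0$ says exactly that $\sum_j a_j=0$ and $\sum_j b_j=0$. In other words, both $\boldsymbol{a}$ and $\boldsymbol{b}$ are orthogonal to $\one$.

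First I will expand the numerator. Since $L(G)$ is real symmetric, the cross terms cancel:
$$\boldsymbol{y}^*L\boldsymbol{y}=(\boldsymbol{a}^T-i\boldsymbol{b}^T)L(\boldsymbol{a}+i\boldsymbol{b})=\boldsymbol{a}^TL\boldsymbol{a}+\boldsymbol{b}^TL\boldsymbol{b}+i\big(\boldsymbol{a}^TL\boldsymbol{b}-\boldsymbol{b}^TL\boldsymbol{a}\big)=\boldsymbol{a}^TL\boldsymbol{a}+\boldsymbol{b}^TL\boldsymbol{b}.$$
The same computation for the denominator gives $\langle\boldsymbol{y}^*,\boldsymbol{y}\rangle=\|\boldsymbol{a}\|^2+\|\boldsymbol{b}\|^2>0$. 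Equivalently, one could write $\boldsymbol{y}^*L\boldsymbol{y}=\sum_{uv\in E(G)}|y_u-y_v|^2$ and split each $|y_u-y_v|^2$ into its real and imaginary parts.

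Next I will apply the real Courant--Fischer principle (Theorem~2.4.1~\cite{BH2012}). Because $\one$ is an eigenvector of $L(G)$ for the eigenvalue $\mu_1(G)=0$, every real vector $\boldsymbol{z}\perp\one$ satisfies $\boldsymbol{z}^TL\boldsymbol{z}\ge\mu_2(G)\|\boldsymbol{z}\|^2$. This holds trivially when $\boldsymbol{z}=\mathbf 0$, so no case split is needed if $\boldsymbol{a}$ or $\boldsymbol{b}$ vanishes. Applying it to $\boldsymbol{a}$ and to $\boldsymbol{b}$ and adding the two inequalities gives
$$\boldsymbol{y}^*L\boldsymbol{y}\ge\mu_2(G)\big(\|\boldsymbol{a}\|^2+\|\boldsymbol{b}\|^2\big)=\mu_2(G)\langle\boldsymbol{y}^*,\boldsymbol{y}\rangle.$$
Dividing by the positive quantity $\langle\boldsymbol{y}^*,\boldsymbol{y}\rangle$ yields the claim.

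There is no real obstacle here. The only step needing care is confirming that the orthogonality condition transfers to both the real part and the imaginary part, which holds because $\one$ is a real vector.
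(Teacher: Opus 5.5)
Your proposal is correct and matches the paper's proof: split $\boldsymbol{y}$ into real and imaginary parts, note both are orthogonal to $\one$, apply the real Courant--Fischer bound to each, and add. Your explicit check that the cross terms cancel because $L(G)$ is real symmetric, and your remark that the zero-vector case needs no separate treatment, fill in details the paper leaves implicit.
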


\begin{proof}
The Courant--Fischer variational principle \cite{BH2012} gives
$
 \mu_2(G)
 =
 \min\limits_{\substack{
          \boldsymbol{x}\in\mathbb R^n\setminus\{\mathbf0\}\\
        \langle\boldsymbol{x}^{\mathsf T},\one\rangle=0}}
 \frac{\boldsymbol{x}^{\mathsf T}L(G)\boldsymbol{x}}
      {\langle\boldsymbol{x}^{\mathsf T},\boldsymbol{x}\rangle}.
$
The condition $\langle\boldsymbol{y}^*, \one\rangle=0$ implies that both $\operatorname{Re}\boldsymbol{y}$ and $\operatorname{Im}\boldsymbol{y}$
are orthogonal to $\one$.  Applying the displayed variational
formula to each nonzero one (with the zero case being trivial) and
adding the resulting inequalities implies
{\small $$
 \boldsymbol{y}^*L(G)\boldsymbol{y}
 =
 (\operatorname{Re}\boldsymbol{y})^{\mathsf T}
 L(G)(\operatorname{Re}\boldsymbol{y})
 +
 (\operatorname{Im}\boldsymbol{y})^{\mathsf T}
 L(G)(\operatorname{Im}\boldsymbol{y})\ge
 \Big(
   |\operatorname{Re}\boldsymbol{y}|_2^2
   +
   |\operatorname{Im}\boldsymbol{y}|_2^2
 \Big)\mu_2(G)
 =\langle\boldsymbol{y}^*,\boldsymbol{y}\rangle\mu_2(G).
$$}%
Dividing by $\langle\boldsymbol{y}^*,\boldsymbol{y}\rangle$ (which is positive) proves the assertion.
\end{proof}

Remark that the orthogonality hypothesis is essential. For example, if $G$ is connected
and $\boldsymbol{y}=\one$, then $\frac{\boldsymbol{y}^*L(G)\boldsymbol{y}}{\langle\boldsymbol{y}^*,\boldsymbol{y}\rangle}=0<\mu_2(G)$.

\begin{lemma}\label{lem:weighted-variational}
Let $H$ be an $n$-vertex complete blow-up of a connected graph $F$ with normalized part sizes $(x_u)_{u\in V(F)}$. For every non-constant
function $g:V(F)\to\mathbb C$, 
$$\frac{\mu_2(H)}{n}\leq
   \frac{\sum\limits_{uv\in E(F)}
      x_ux_v \big|g(u)-g(v)\big|^2
   }{\sum\limits_{u\in V(F)}x_u \big|g(u)-\sum\limits_{u\in V(F)}x_u g(u)\big|^2
   }.
$$
\end{lemma}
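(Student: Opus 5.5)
The plan is to lift $g$ to a vector on $V(H)$ that is constant on parts, center it, and feed it into \Cref{lem:complex-rayleigh}.

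Fix a non-constant $g:V(F)\to\mathbb C$ and put $c:=\sum_{u\in V(F)}x_u g(u)$. Define $\boldsymbol{y}\in\mathbb C^{V(H)}$ by $y_z:=g(u)-c$ for every $z\in V_u$, where $V_u$ is the part of $H$ corresponding to $u$.

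First, check orthogonality:
\[
\langle\boldsymbol{y}^*,\one\rangle=\sum_{u\in V(F)}x_un\,\overline{(g(u)-c)}=n\Big(\overline{\textstyle\sum_u x_ug(u)}-\bar c\Big)=0,
\]
using $\sum_u x_u=1$.

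Second, check that $\boldsymbol{y}\neq\mathbf 0$. If $\boldsymbol{y}=\mathbf 0$, then $g\equiv c$ on $V(F)$, since every part is nonempty. This contradicts the assumption that $g$ is non-constant. Hence \Cref{lem:complex-rayleigh} applies to $\boldsymbol{y}$.

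Third, compute the Rayleigh quotient. The Hermitian form of the Laplacian is
\[
\boldsymbol{y}^*L(H)\boldsymbol{y}=\sum_{zw\in E(H)}|y_z-y_w|^2 .
\]
For each edge $uv\in E(F)$ there are exactly $|V_u||V_v|=x_ux_vn^2$ edges of $H$ between $V_u$ and $V_v$. Each such edge contributes $|g(u)-g(v)|^2$, because the shift by $c$ cancels. No other edges exist in $H$. So the numerator equals
\[
n^2\sum_{uv\in E(F)}x_ux_v|g(u)-g(v)|^2 .
\]
The denominator is
\[
\langle\boldsymbol{y}^*,\boldsymbol{y}\rangle=\sum_u x_un\,|g(u)-c|^2 .
\]
Dividing by $n$ then gives exactly the claimed bound on $\mu_2(H)/n$.

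The only point needing care is the identity $\boldsymbol{y}^*L\boldsymbol{y}=\sum_{zw\in E}|y_z-y_w|^2$ for complex vectors. It follows from $L=\sum_{zw\in E}(\boldsymbol{e}_z-\boldsymbol{e}_w)(\boldsymbol{e}_z-\boldsymbol{e}_w)^{\mathsf T}$. The rest is routine bookkeeping, so I expect no real obstacle.
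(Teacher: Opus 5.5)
Your proof is correct and follows essentially the same route as the paper. Both lift the centered function $g-\sum_u x_u g(u)$ to a part-constant vector on $V(H)$, check that it is orthogonal to $\one$, compute the Laplacian form and the norm, and apply \Cref{lem:complex-rayleigh}; your explicit check that the lifted vector is nonzero (using non-constancy of $g$ and nonempty parts) is a small point the paper leaves implicit.
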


\begin{proof}
For each vertex $v\in V(H)$, let $f_v:=g(u)-\sum_{u\in V(F)}x_u g(u)$ whenever $v\in V_u$ and let $\boldsymbol{f}:=(f_v)_{v\in V(H)}\in\mathbb C^{V(H)}$. Since $\sum_{u\in V(F)}x_u=1$, we have that 
\begin{align*}
\langle\boldsymbol{f}, \one\rangle
&=\sum_{v\in V(H)}f_v
=\sum_{v\in V(H)} \bigl(g(u)-\sum_{u\in V(F)}x_u g(u)\bigr)=n\sum_{u\in V(F)}x_u
\bigl(
g(u)-\sum_{u\in V(F)}x_ug(u)
\bigr)\\
&=
n\bigl(
\sum_{u\in V(F)}x_ug(u)-\bigl(\sum_{u\in V(F)}x_u\bigr)\bigl(\sum_{u\in V(F)}x_ug(u)\bigr)
\bigr)=0,
\end{align*}
and thus the vector $\boldsymbol{f}$ is orthogonal to $\one$. Since each pair of adjacent parts is complete,
$$\boldsymbol{f}^*L(H)\boldsymbol{f}=n^2\sum_{uv\in E(F)}x_ux_v \big|g(u)-g(v)\big|^2,
\text{~~and~~}
\langle\boldsymbol{f}^*,\boldsymbol{f}\rangle=n\sum_{u\in V(F)} x_u \big|g(u)-\sum_{u\in V(F)}x_u g(u)\big|^2.
$$
We conclude the proof by~\Cref{lem:complex-rayleigh}.
\end{proof}

\subsection{Induced matching conditions for the algebraic connectivity}

For certain graph structures, we may define a suitable function $g$ to establish the desired upper bound. An \emph{induced matching} of a graph $G$ is a matching $M$ such that the subgraph of $G$ induced by the endpoints of the edges in $M$ consists precisely of the edges in $M$.

\begin{lemma}\label{lem:matching}
Let $r$ be a positive real number. Let $G$ be a connected graph and let $H$ be its $n$-vertex complete blow-up. For each vertex $u\in V(G)$, let $x_{u}$ denote the normalized part size of $u$ in $H$ and let $d_{u}$ denote the normalized part degree of $u$ in $H$. If there are two edges $u_1u_2$ and $u_3u_4$ that form an induced matching in $G$ such that $\sum_{j=1}^4\frac{d_{u_j}-r}{x_{u_j}}<4$, then $\mu_2(H)<rn$. 
\end{lemma}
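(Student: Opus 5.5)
The plan is to exhibit a real test vector on $V(H)$ that is orthogonal to $\one$, supported on the four parts $V_{u_1},\dots,V_{u_4}$ and constant on each of them, and whose Rayleigh quotient is strictly less than $rn$. The Courant--Fischer principle (the real case of \Cref{lem:complex-rayleigh}) then gives $\mu_2(H)<rn$.

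\emph{Choice of the vector.} Let $\boldsymbol f$ take the value $a_j$ on $V_{u_j}$ for $j\in\{1,2,3,4\}$ and $0$ elsewhere, with
$$a_1=\tfrac{1}{x_{u_1}},\quad a_2=\tfrac{1}{x_{u_2}},\quad a_3=-\tfrac{1}{x_{u_3}},\quad a_4=-\tfrac{1}{x_{u_4}}.$$
Orthogonality is immediate:
$$\langle \boldsymbol f,\one\rangle=n\sum_j x_{u_j}a_j=n(1+1-1-1)=0.$$
The four parts are distinct because $u_1u_2,u_3u_4$ is a matching, and $\boldsymbol f\neq \mathbf 0$.

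\emph{Computing the quotient.} We use
$$\boldsymbol f^{\mathsf T}L(H)\boldsymbol f=\sum_{v}d_H(v)f_v^2-2\sum_{vw\in E(H)}f_vf_w,$$
where every vertex of $V_{u_j}$ has degree $d_{u_j}n$.
\begin{itemize}
\item The diagonal part contributes $n^2\sum_j d_{u_j}x_{u_j}a_j^2$.
\item For the cross terms, only edges of $H$ with both endpoints in the support matter. Since $\{u_1u_2,u_3u_4\}$ is an induced matching in $G$, the only such edges are those between $V_{u_1}$ and $V_{u_2}$ and those between $V_{u_3}$ and $V_{u_4}$; each part is independent in $H$. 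These contribute $-2n^2\big(x_{u_1}x_{u_2}a_1a_2+x_{u_3}x_{u_4}a_3a_4\big)=-2n^2(1+1)=-4n^2$.
\item The norm is $\langle\boldsymbol f,\boldsymbol f\rangle=n\sum_j x_{u_j}a_j^2$.
\end{itemize}

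\emph{Conclusion.} Substituting $x_{u_j}a_j^2=1/x_{u_j}$, the inequality $\boldsymbol f^{\mathsf T}L(H)\boldsymbol f<rn\langle\boldsymbol f,\boldsymbol f\rangle$ becomes
$$n^2\Big(\sum_j\tfrac{d_{u_j}}{x_{u_j}}-4\Big)<n^2\sum_j\tfrac{r}{x_{u_j}},$$
which is exactly the hypothesis $\sum_{j}\frac{d_{u_j}-r}{x_{u_j}}<4$. Hence
$$\mu_2(H)\le \frac{\boldsymbol f^{\mathsf T}L(H)\boldsymbol f}{\langle\boldsymbol f,\boldsymbol f\rangle}<rn.$$

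Alternatively, one could apply \Cref{lem:weighted-variational} with $g(u_j)=a_j$ and $g=0$ elsewhere. Since $\sum_u x_ug(u)=0$, the denominator is the same as above. In the numerator, edges from $u_j$ to vertices outside the support contribute $x_{u_j}x_w a_j^2$, and together with the matching edges these sum to $\sum_j d_{u_j}x_{u_j}a_j^2-2(\cdots)$, so the computation is identical. No step is a real obstacle. The only point needing care is the sign choice: it is what simultaneously makes the vector orthogonal to $\one$ and makes both matching edges contribute the favourable $-2$. It is also where the induced-matching hypothesis enters, since it rules out any other edge terms among the four parts.
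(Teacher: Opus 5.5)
Your proof is correct and is essentially the paper's argument. The paper uses the same test function $g(u_1)=1/x_{u_1}$, $g(u_2)=1/x_{u_2}$, $g(u_3)=-1/x_{u_3}$, $g(u_4)=-1/x_{u_4}$ (zero elsewhere) and applies \Cref{lem:weighted-variational}, which is exactly your ``alternative''; your direct lift to $V(H)$ with Courant--Fischer simply unpacks that lemma, and your energy count $n^2\bigl(\sum_j d_{u_j}/x_{u_j}-4\bigr)$ and norm $n\sum_j 1/x_{u_j}$ match the paper's.
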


\begin{proof}
Assume that $G$ contains an induced matching $u_1u_2$ and $u_3u_4$ such that $\sum_{j=1}^4\frac{d_{u_j}-r}{x_{u_j}}<4$. We define $g:V(G)\to\mathbb R$ by
$$
   g(u_1)=\frac1{x_{u_1}},
   \qquad
   g(u_2)=\frac1{x_{u_2}},
   \qquad
   g(u_3)=-\frac1{x_{u_3}},
   \qquad
   g(u_4)=-\frac1{x_{u_4}},
$$
and $g(u)=0$ for any $u\in V(G)\setminus \{u_1,u_2,u_3,u_4\}$. It is easy to see that $\sum_{u\in V(G)}x_u g(u)=0$ and $\sum_{u\in V(G)}x_u|g(u)|^2=\sum_{j=1}^4\frac1{x_{u_j}}.$
Moreover,
$\sum_{uv\in E(G)}
 x_ux_v|g(u)-g(v)|^2
=\sum_{u\in V(G)} x_ud_ug(u)^2
-2\sum_{uv\in E(G)}x_ux_vg(u)g(v)=
(\sum_{j=1}^4\frac{d_{u_j}}{x_{u_j}})-4.$
Hence, $$
\sum_{uv\in E(G)}
 x_ux_v|g(u)-g(v)|^2
-r\sum_{u\in V(G)}x_u|g(u)|^2\\
=
(\sum_{j=1}^4
\frac{d_{u_j}-r}{x_{u_j}})-4
<0.$$
It then follows directly from~\Cref{lem:weighted-variational}.
\end{proof}

\section[triangle-free]{Triangle-free graphs with $\mu_2(G)\geq \frac{n}{3}$}

In this section, we shall prove the following main result.

\medskip
\noindent
{\bf \Cref*{thm:main}.} \emph{Every $n$-vertex triangle-free graph $G$ with $\mu_2(G)\geq \frac{n}{3}$ is bipartite.
Moreover, the constant $\frac{1}{3}$ is asymptotically best possible.}
\medskip

The following structural characterization theorem~\cite{BT2005} is a key ingredient in our proof; See also Theorem~5.1 of~\cite{LPR2021}.

\begin{theorem}[Brandt and Thomass\'e~\cite{BT2005}]
\label{thm:structure}
Every $n$-vertex maximal triangle-free graph $G$ with $\delta(G)>\frac{n}{3}$ is a complete blow-up of an Andr\'asfai graph or a Vega graph.
\end{theorem}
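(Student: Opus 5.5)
We would argue in two stages: first reduce to a weighted, twin-free version of the problem, and then classify the weighted graphs that can occur.

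\emph{Reduction to a twin-free quotient.} Call two vertices of $G$ twins if they have the same neighbourhood; twins are nonadjacent. Let $F$ be the quotient of $G$ by the twin relation.
\begin{itemize}
\item Maximality of $G$ means that every nonadjacent pair has a common neighbour, and adding any edge creates a triangle.
\item Using this, we check that whenever one vertex of class $A$ is adjacent to one vertex of class $B$, every vertex of $A$ is adjacent to every vertex of $B$. Hence $G$ is a complete blow-up of $F$.
\item The quotient $F$ is itself maximal triangle-free and twin-free.
\end{itemize}
Give each vertex $u$ of $F$ the weight $x_u$, the normalized size of its class. The hypothesis becomes $d_u=\sum_{v\in N_F(u)}x_v>\frac13$ for every $u$, with $\sum_u x_u=1$. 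So it suffices to show that every twin-free maximal triangle-free graph $F$ carrying such a weighting is an Andr\'asfai graph or a Vega graph.

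\emph{The $3$-colourable case.} Here we would invoke the known theorem of Jin and of Chen--Jin--Koh, in its weighted form obtained by blowing up. It says that a $3$-colourable triangle-free graph with minimum degree $>\frac n3$ is homomorphic to some Andr\'asfai graph $\Gamma_i$.
\begin{itemize}
\item Take $i$ minimal with a homomorphism $\varphi\colon F\to\Gamma_i$.
\item Twin-freeness forces $\varphi$ to be injective: two vertices with the same image would have comparable neighbourhoods, and maximality upgrades comparability to equality.
\item Maximality then forces $F$ to be the whole of $\Gamma_i$, since a missing edge of $\Gamma_i$ between image vertices could be added to $F$ without creating a triangle.
\end{itemize}
A weighted averaging argument should also show that $F$ contains an induced $C_5$ unless $F=K_2$, which is the base case of this induction.

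\emph{The non-$3$-colourable case.} We expect this to be the main obstacle.
\begin{itemize}
\item First show $\chi(F)=4$ by a weighting argument. For an induced $C_5$ with vertex set $C$, we have $\sum_{u\in C}d_u>\frac53$. Since $F$ is triangle-free, each vertex has at most two neighbours on an induced $C_5$, so this forces some vertex to see exactly two vertices of $C$.
\item Iterating the previous step produces a Gr\"otzsch configuration $\Upsilon_2^{11}$ inside $F$, possibly with some vertices identified.
\item Then grow this configuration to a maximal Vega structure $\Upsilon_i^{\mu\nu}$: keep an Andr\'asfai core $\Gamma_i$ (chosen with $i$ maximal) together with the $6$-cycle $a,v,c,u,b,w$ and the vertices $x,y$.
\item Show that every further vertex of $F$ has the neighbourhood of an existing vertex, and so is a twin, contradicting twin-freeness. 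For each possible type of attachment we exhibit a weighted cycle whose total weighted degree violates $d>\frac13$, or a triangle.
\end{itemize}

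The hardest part will be this exhaustive attachment analysis. We would organize it by the pattern of neighbours a new vertex $z$ has on the $6$-cycle, and treat each pattern using maximality together with the degree-sum inequality summed over suitable $5$- and $7$-cycles. If this becomes unmanageable, the fallback is to follow Brandt--Thomass\'e's original route: first bound $\chi\le4$ via their dominating-cycle lemma, and only then extract the Vega structure.
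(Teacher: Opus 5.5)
The paper gives no proof of this statement. It is quoted from Brandt and Thomass\'e and used as a black box in the proof of \Cref{thm:main}, so your sketch has to stand on its own. In the non-$3$-colourable case it does not.

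\textbf{The genuine gap: the non-$3$-colourable case.} Your ``weighting argument'' for $\chi(F)=4$ proves nothing.
\begin{itemize}
\item Every vertex of the induced $5$-cycle $C$ already has exactly two neighbours on $C$, so ``some vertex sees exactly two vertices of $C$'' is automatic.
\item The correct sharpening does not help either. Write $w_t$ for the total weight of vertices with exactly $t$ neighbours on $C$. Then $w_1+2w_2=\sum_{u\in C}d_u>\frac53$ and $w_1+w_2\le 1$, so $w_2>\frac23$. This says nothing about colourings.
\item That triangle-free graphs with $\delta>\frac n3$ are $4$-colourable is precisely the Brandt--Thomass\'e solution of the Erd\H{o}s--Simonovits problem. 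It is a result of the same depth as the classification you are proving, and it cannot come out of a degree sum over one $5$-cycle.
\end{itemize}
The later steps are only announced. No iteration is defined that would produce a Gr\"otzsch graph. The attachment analysis around a ``maximal'' $\Upsilon_i^{\mu\nu}$ is not carried out. Nor is the extremal choice set up so that a non-twin vertex gives a contradiction, rather than simply a larger Andr\'asfai core or a different Vega graph. Your stated fallback is to follow Brandt--Thomass\'e's original proof. So the proposal in effect reduces the non-$3$-colourable half of the theorem to the theorem itself, and that half is where all the difficulty lies.

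\textbf{What works, with two repairs.} The reduction to a twin-free weighted quotient is fine. The blow-up claim holds for any graph; maximality is needed only to show $F$ is maximal triangle-free. The $3$-colourable case via Chen--Jin--Koh also works once two steps are fixed.
\begin{itemize}
\item \emph{Injectivity.} ``Same image implies comparable neighbourhoods'' is not justified as stated. Argue instead as follows. Suppose $\varphi(u)=\varphi(v)$ and $w\in N(u)\setminus N(v)$. By maximality, $v$ and $w$ have a common neighbour $z$. Then $\varphi(v)\varphi(w)\varphi(z)$ is a triangle in $\Gamma_i$, a contradiction. This gives $N(u)=N(v)$ directly.
\item \emph{Surjectivity.} The missing-edge argument only shows that $F$ is an induced subgraph of $\Gamma_i$. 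To get all of $\Gamma_i$ you must actually use the minimality of $i$. The tool is the fact that $\Gamma_i-v\to\Gamma_{i-1}$ for every vertex $v$. This is the vertex-deletion property of circular cliques from Hell--Ne\v{s}et\v{r}il, which the paper quotes only for $K_{4k/(2k-1)}$. It applies here since $(3i-1)(i-1)-(3i-4)i=1$.
\end{itemize}
With these repairs, the remark about an induced $C_5$ and an ``induction'' becomes unnecessary.
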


\subsection{Complete blow-ups of an Andr\'asfai graph}

We begin by establishing an upper bound on the algebraic connectivity of complete blow-ups of Andr\'asfai graphs.

\begin{proposition}\label{prop:andrasfai-blowup}
For any integer $k$ with $k\geq 2$, every $n$-vertex complete blow-up $H$ of an Andr\'asfai graph $\Gamma_k$ satisfies $\mu_2(H)<\frac{n}{3}$.
\end{proposition}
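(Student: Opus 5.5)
We argue by contradiction. Suppose $H=\Gamma_k(n_1,\dots,n_{3k-1})$ satisfies $\mu_2(H)\ge \frac n3$, and write $p=3k-1$, identifying $V(\Gamma_k)$ with $\mathbb Z_p$. By \Cref{lem:blowup-reduction-rk}, every part of size at least $2$ then has $d_u\ge \frac13$. The main tool is \Cref{lem:matching} with $r=\frac13$. It suffices to find two edges $u_1u_2,u_3u_4$ forming an induced matching of $\Gamma_k$ with
\[
\sum_{j=1}^4\frac{d_{u_j}-\frac13}{x_{u_j}}<4 .
\]

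\emph{The balanced case.} For the balanced blow-up, $x_u=\frac1p$ and $d_u=\frac kp$ for every $u$. Hence $\frac{d_u-1/3}{x_u}=k-\frac p3=\frac13$, and any induced matching gives the sum $\frac43<4$. This suggests the slack is large and a counting argument over rotations should work. Induced matchings exist and are rotation invariant. For example, take $\{0,k\}$ together with the translate $\{s,s+k\}$ for a suitable shift $s$. One needs $s$, $s+k$, $s-k$ and $s$ all to avoid the neighbourhood intervals $[k,2k-1]$ of $0$ and $[2k,3k-1]$ of $k$ (indices mod $p$). A small shift such as $s=-1$ or $s=1$ has to be checked against these intervals; I would first verify that one concrete pair, e.g.\ $\{0,k\},\{1-k\cdot 0,\dots\}$ suitably chosen, works for all $k\ge2$, fixing a pattern $M_0$ and its rotations $M_i=M_0+i$.

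\emph{Averaging.} Summing over all $p$ rotations, every vertex appears a fixed number $c$ of times ($c=4$ if the four endpoints are distinct). So it suffices to show
\[
\sum_{u\in\mathbb Z_p}\frac{d_u-\frac13}{x_u}<p .
\]
This is false in general, since a tiny $x_u$ with $d_u>\frac13$ blows it up. So the averaging must be refined. Two routes are available.

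First, restrict to rotations avoiding "bad" vertices with $d_u>\frac13$ and $x_u$ small. Here I would use that $d_u$ is the $x$-mass of the interval $[u+k,u+2k-1]$. Consecutive vertices $u,u+1$ have $d_{u+1}-d_u=x_{u+2k}-x_{u+k}$. Hence $\sum_u d_u=k$, and $\sum_u x_u(d_u-\tfrac13)=2e(x)-\tfrac13$.

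Second, and this is what I would try first, apply the character test of \Cref{lem:weighted-variational} directly with $g(u)=\omega^{ju}$, $\omega=e^{2\pi i/p}$. Here $j$ is chosen so that $3j\equiv \pm1 \pmod p$, i.e.\ $j=k$; then the neighbourhood interval of each vertex is mapped to a short arc. The numerator is $\sum_{uv\in E}x_ux_v|1-\omega^{j(v-u)}|^2$, and all the differences $j(v-u)$ lie in a controlled set of residues. The goal is to compare it with $\frac13\bigl(1-|\sum_u x_u\omega^{ju}|^2\bigr)$.

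\emph{Main obstacle.} The main obstacle is the unbalanced-weights case: showing that no choice of weights can push both the matching sums and the character quotients above $\frac13$. I expect to combine the two tools. If some $x_u$ is very small, the character quotient is governed by the remaining heavy vertices, and the argument recurses to a blow-up of a proper subgraph of $\Gamma_k$. That subgraph is either bipartite-like (giving a small $\theta_2$) or contains $\Gamma_{k-1}$, which allows induction on $k$ with base case $\Gamma_2=C_5$. Otherwise all $x_u$ are bounded below, and the rotation-averaged matching inequality closes the argument via convexity (Jensen/Cauchy--Schwarz on $\sum 1/x_u$, weighed against $\sum d_u=k$).
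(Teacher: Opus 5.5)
There is a genuine gap: the step you call the ``main obstacle'' is left open, and the remedies you sketch (recursion to proper subgraphs, induction on $k$, convexity on $\sum 1/x_u$) are not an argument. For $k\ge 3$ your choice of tool, \Cref{lem:matching} with $r=\frac13$, is the right one, and it is also the paper's. What you are missing is a local identity that removes the obstacle.

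First, use \Cref{lem:Fiedler} instead of \Cref{lem:blowup-reduction-rk}. Since $H$ is non-complete, $\mu_2(H)\ge\frac n3$ forces $d_u\ge\frac13$ for \emph{every} part, not only for parts of size at least $2$. Next, the neighbourhood intervals of $i-k$, $i$ and $i+k$ tile $\mathbb{Z}_{3k-1}$ with $i$ covered twice. Hence $d_{i-k}+d_i+d_{i+k}=1+x_i$, that is,
\[
\bigl(d_{i-k}-\tfrac13\bigr)+\bigl(d_i-\tfrac13\bigr)+\bigl(d_{i+k}-\tfrac13\bigr)=x_i .
\]
So $0\le \frac{d_i-1/3}{x_i}\le 1$ for every $i$, and a tiny $x_u$ cannot blow up the ratio as you feared.

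Summing the identity over $i$ gives $3\sum_i(d_i-\frac13)=1$, so some $d_s>\frac13$. Then the ratio at $s+k$ is strictly less than $1$. The edges $\{s+k,\,s+2k+1\}$ and $\{s+k+2,\,s+3k\}$ form an induced matching when $k\ge 3$. Their ratio sum is therefore $<4$, and \Cref{lem:matching} finishes. Once the ratios are known to lie in $[0,1]$ with one strictly below $1$, your rotation-averaging would also close the argument.

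Second, the case $k=2$ cannot be reached by your main tool at all. In $\Gamma_2=C_5$ any two disjoint edges are joined by an edge, so there is no induced matching of two edges. Your claim that induced matchings exist fails there, and \Cref{lem:matching} is vacuous. Calling $C_5$ the base case of an induction does not prove it. The paper handles $C_5$ separately: it computes traces of powers of $Q$ and shows that the third elementary symmetric function of $\beta_2-\frac13,\dots,\beta_5-\frac13$ equals $\frac13(F-\frac49)<0$, with equality excluded because all $a_i>0$. This forces $\beta_2<\frac13$.

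Your alternative character test is also mis-specified. Multiplication by $k\equiv 3^{-1}\pmod{3k-1}$ does not send $[k,2k-1]$ to a short arc. For $k=3$ it sends $\{3,4,5\}$ to $\{1,4,7\}$ in $\mathbb{Z}_8$, which contains the antipode $4$. So that route would not give the needed bound as described.
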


We consider two cases: $k=2$ and $k\geq 3$. 

\subsubsection[C5]{Complete blow-ups of $C_5$}

Note that the Andr\'asfai graph $\Gamma_2$ is isomorphic to the odd cycle $C_5$. Assume that $H$ is an $n$-vertex complete blow-up of $C_5$ with normalized part sizes being $(a_i)_{i\in \mathbb{Z}_5}$ (where all indices are taken modulo $5$). Note that $a_i>0$ for each $i\in \mathbb{Z}_5$ and $\sum_{i=1}^5 a_i=1$. By the definition, the normalized quotient Laplacian matrix $Q=(Q_{ij})_{1\le i,j\le 5}$ of $H$ is as follows:
$$
Q_{ij}=
\begin{cases}
a_{i-1}+a_{i+1}, & j=i,\\
-\sqrt{a_i a_j}, & j=i-1 \text{ or } j=i+1,\\
0, & \text{otherwise}.
\end{cases}
$$
Let $\beta_1, \beta_2, \ldots, \beta_5$ denote the eigenvalues of $Q$ such that $\beta_1<\beta_2\le\beta_3\le\beta_4\le\beta_5$, where $\beta_1=0$. For values $x_1,\ldots,x_m$, let
$$
\psi_1(x_1,\ldots,x_m):=\sum_{1\le i\le m}x_i,~
\psi_2(x_1,\ldots,x_m):=\sum_{1\le i<j\le m}x_ix_j,
~\text{and}~
\psi_3(x_1,\ldots,x_m):=\sum_{1\le i<j<k\le m}x_ix_jx_k.
$$

\begin{lemma}\label{lem:traceQ}
We have that
\begin{itemize}
\setlength{\itemsep}{0em}
    \item $\psi_1(\beta_2,\beta_3,\beta_4,\beta_5)=2$.
    \item $\psi_2(\beta_2,\beta_3,\beta_4,\beta_5)=\frac{1}{2}(3-\sum_{i=1}^5a_i^2)$.
    \item $\psi_3(\beta_2,\beta_3,\beta_4,\beta_5)=\frac{1}{3}-\frac{1}{3}\sum_{i=1}^5a_i^3+2\sum_{i=1}^5a_ia_{i+1}a_{i+2}$.
\end{itemize}
\end{lemma}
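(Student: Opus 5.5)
Since $\beta_1=0$, the characteristic polynomial of $Q$ is $\det(tI-Q)=t\prod_{j=2}^5(t-\beta_j)$. Hence $\psi_m(\beta_2,\ldots,\beta_5)$ equals the $m$-th elementary symmetric function of the full spectrum of $Q$ for $m=1,2,3$. That function is the sum of all $m\times m$ principal minors of $Q$. So the plan is to compute $E_1,E_2,E_3$, the sums of the $1\times1$, $2\times2$ and $3\times3$ principal minors of $Q$, using the explicit entries. Throughout, write $d_i:=a_{i-1}+a_{i+1}$ and use $\sum_i a_i=1$ to homogenize. We also use the identity
$$1=\Big(\sum_i a_i\Big)^2=\sum_i a_i^2+2S_1+2S_2,\qquad S_1:=\sum_i a_ia_{i+1},\quad S_2:=\sum_i a_{i-1}a_{i+1}.$$

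\emph{First two coefficients.} $E_1=\operatorname{tr}Q=\sum_i d_i=2\sum_i a_i=2$. For $E_2$, each $2\times 2$ principal minor on $\{i,j\}$ equals $d_id_j-Q_{ij}^2$, and $Q_{ij}^2=a_ia_j$ exactly when $ij$ is an edge of $C_5$. Therefore
$$E_2=\sum_{i<j}d_id_j-S_1=\tfrac12\Big(4-\sum_i d_i^2\Big)-S_1 .$$
Expanding gives $\sum_i d_i^2=2\sum_i a_i^2+2S_2$, so $E_2=2-\sum_i a_i^2-S_1-S_2$. Substituting $S_1+S_2=\frac12(1-\sum_i a_i^2)$ yields $\frac12(3-\sum_i a_i^2)$.

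\emph{Third coefficient.} Since $C_5$ is triangle-free, a triple $\{i,j,k\}$ spans at most a path, and the determinant expansion has no term with three off-diagonal factors. So the $3\times3$ principal minor equals
$$d_id_jd_k-\sum_{\text{edges } pq\subseteq\{i,j,k\}} a_pa_q\, d_r,$$
where $r$ is the third vertex of the triple. Summing over all triples and exchanging the order of summation gives
$$E_3=e_3(d_1,\ldots,d_5)-\sum_{i}a_ia_{i+1}\bigl(2-d_i-d_{i+1}\bigr),$$
because for a fixed edge $\{i,i+1\}$ the third vertex runs over all $k\ne i,i+1$ and $\sum_k d_k=2$. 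Next, express $e_3(d)$ via Newton's identity $e_3=\frac16(p_1^3-3p_1p_2+2p_3)$ with $p_1=2$. This needs $p_2=\sum_i d_i^2$ (already computed) and $p_3=\sum_i (a_{i-1}+a_{i+1})^3$. Finally, expand $\sum_i a_ia_{i+1}(d_i+d_{i+1})$ into monomials of the $a$'s.

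The main obstacle is the bookkeeping in this last step. After expansion everything is a cubic (or lower-degree, then homogenized by multiplying by powers of $\sum_i a_i=1$) polynomial in $a_1,\ldots,a_5$ that is invariant under the dihedral group. So I would compare coefficients on the orbit representatives of cubic monomials under rotation and reflection: $a_i^3$, $a_i^2a_{i\pm1}$, $a_i^2a_{i\pm2}$, $a_ia_{i+1}a_{i+2}$ and $a_ia_{i+1}a_{i+3}$. The right-hand side $\frac13(\sum_i a_i)^3-\frac13\sum_i a_i^3+2\sum_i a_ia_{i+1}a_{i+2}$ has an easily read coefficient in each class, and checking equality class by class is routine. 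As a sanity check, I would test the uniform point $a_i=\frac15$. There $Q=\frac15 L(C_5)$, so $\beta_j=\frac15\mu_j(C_5)$, and both sides can be compared using the known spectrum $2-2\cos(2\pi j/5)$.
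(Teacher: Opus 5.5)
Your proposal is correct, and it takes a different route from the paper to the coefficients of the characteristic polynomial.

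The paper works with power sums. It computes $\operatorname{tr}Q$, $\operatorname{tr}Q^2$ and $\operatorname{tr}Q^3$, the last via the splitting $Q=D-B$, so that $\operatorname{tr}Q^3=\operatorname{tr}D^3+3\operatorname{tr}(DB^2)-\operatorname{tr}B^3$ with $\operatorname{tr}B^3=0$ and $(B^2)_{ii}=a_i(a_{i-1}+a_{i+1})$. It then converts to $\psi_2,\psi_3$ with Newton's identities.

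You instead read off the elementary symmetric functions directly as sums of principal minors. Triangle-freeness enters for you as the vanishing of the $2Q_{ij}Q_{jk}Q_{ki}$ term in each $3\times3$ minor, rather than as $\operatorname{tr}B^3=0$.

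What each route buys:
\begin{itemize}
\item Your route gives $\psi_2$ with no Newton step at all.
\item For $\psi_3$ you still need Newton for $e_3(d_1,\dots,d_5)$, plus the separate edge-correction sum. So your bookkeeping is somewhat heavier than the paper's single matrix-level computation, where $\operatorname{tr}(DB^2)$ packages all off-diagonal contributions at once.
\end{itemize}

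The step you left as routine does close, and more quickly than class-by-class coefficient comparison. Use the three expansions
\begin{itemize}
\item $\sum_i d_i^2=2\sum_i a_i^2+2S_2$,
\item $\sum_i d_i^3=2\sum_i a_i^3+3\sum_i a_i^2(a_{i-2}+a_{i+2})$,
\item $\sum_i a_ia_{i+1}(d_i+d_{i+1})=\sum_i a_i^2(a_{i-1}+a_{i+1})+2\sum_i a_ia_{i+1}a_{i+2}$,
\end{itemize}
together with $2S_1+2S_2=1-\sum_i a_i^2$. Your expression then becomes
\[
E_3=\frac13-\sum_i a_i^2+\frac23\sum_i a_i^3+\sum_i a_i^2(a_{i-2}+a_{i-1}+a_{i+1}+a_{i+2})+2\sum_i a_ia_{i+1}a_{i+2}.
\]
The bracket equals $1-a_i$, which immediately gives $\frac13-\frac13\sum_i a_i^3+2\sum_i a_ia_{i+1}a_{i+2}$. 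This identity $\sum_{j\ne i}a_j=1-a_i$ is exactly the simplification the paper uses in its $\operatorname{tr}Q^3$ computation.
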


\begin{proof}
Note that $\beta_1=0$. Obviously $\psi_1(\beta_2,\beta_3,\beta_4,\beta_5)= \operatorname{tr}Q=\sum_{i=1}^5(a_{i-1}+a_{i+1})=2$. For the second claim,  
$\operatorname{tr}Q^2=\sum_{i=1}^5(a_{i-1}+a_{i+1})^2+2\sum_{i=1}^5 a_ia_{i+1}=1+\sum_{i=1}^5 a_i^2$, where the last equality is because of the fact $(\sum_{i=1}^5 a_i)^2=1$. Then $\psi_2(\beta_2,\beta_3,\beta_4,\beta_5)=\frac{(\operatorname{tr}Q)^2-\operatorname{tr}Q^2}{2}=\frac{1}{2}(3-\sum_{i=1}^5a_i^2)$.

For the third one, note that $Q=D-B$, where $D=\diag(a_{i-1}+a_{i+1})$ and $B$ has nonzero entries $B_{i,i+1}=B_{i+1,i}=\sqrt{a_ia_{i+1}}$, and all other entries of $B$ are zero. We have that 
{\small
\begin{align*}
\operatorname{tr}Q^3&=\operatorname{tr}\bigl(
D^3-D^2B-DBD-BD^2+DB^2+BDB+B^2D-B^3
\bigr)\\
&=\operatorname{tr}D^3+3\operatorname{tr}(DB^2)-\operatorname{tr}B^3\\
&=\sum_{i=1}^5 (a_{i-1}+a_{i+1})^3+3\sum_{i=1}^5 a_i(a_{i-1}+a_{i+1})^2\\
&=2\sum_{i=1}^5a_i^3+3\sum_{i=1}^5a_i^2(a_{i-2}+a_{i+2})+3\sum_{i=1}^5a_i^2(a_{i-1}+a_{i+1})+6\sum_{i=1}^5a_ia_{i+1}a_{i+2}\\
&=2\sum_{i=1}^5a_i^3+3\sum_{i=1}^5a_i^2(1-a_i)+6\sum_{i=1}^5a_ia_{i+1}a_{i+2}\\
&=3\sum_{i=1}^5a_i^2-\sum_{i=1}^5a_i^3+6\sum_{i=1}^5a_ia_{i+1}a_{i+2}.
\end{align*}}%
Indeed, since $D$ is diagonal and $B$ has zero diagonal, $\operatorname{tr}(D^2B)=\operatorname{tr}(DBD)=\operatorname{tr}(BD^2)=0,$
while, by the cyclicity of the trace,
$\operatorname{tr}(DB^2)
=\operatorname{tr}(BDB)
=\operatorname{tr}(B^2D)
$; Since $C_5$ is triangle-free $\operatorname{tr}B^3=0$, and, moreover, $(B^2)_{ii}=a_i(a_{i-1}+a_{i+1})$. 

Hence,
$\psi_3(\beta_2,\beta_3,\beta_4,\beta_5)
=\frac{1}{6}\big(
(\operatorname{tr}Q)^3
-3\operatorname{tr}Q\operatorname{tr}Q^2
+2\operatorname{tr}Q^3\big)
=\frac{1}{3}-\frac{1}{3}\sum_{i=1}^5a_i^3
+2\sum_{i=1}^5 a_ia_{i+1}a_{i+2}.$
This completes the proof.
\end{proof}

For each $j\in \{2,3,4,5\}$, let $\gamma_j:=\beta_j-\frac{1}{3}.$ Note that
{\small 
\begin{align*}
\psi_3(\gamma_2,\gamma_3,\gamma_4,\gamma_5)
&=\sum_{2\leq i<j<k\leq 5}
 (\beta_i-\frac{1}{3})
 (\beta_j-\frac{1}{3})
 (\beta_k-\frac{1}{3})\\
&=\psi_3(\beta_2,\beta_3,\beta_4,\beta_5)
-\frac{1}{3}\sum_{2\leq i<j<k\leq 5}(\beta_i\beta_j+\beta_i\beta_k+\beta_j\beta_k)+\frac{1}{9}\sum_{2\leq i<j<k\leq 5}
 (\beta_i+\beta_j+\beta_k)
-\frac{4}{27}\\
&=
\psi_3(\beta_2,\beta_3,\beta_4,\beta_5)-\frac{2}{3}\psi_2(\beta_2,\beta_3,\beta_4,\beta_5)
+\frac{1}{3}\psi_1(\beta_2,\beta_3,\beta_4,\beta_5)
-\frac{4}{27}\\
&=
\frac{1}{3}-\frac{1}{3}\sum_{i=1}^5a_i^3
+2\sum_{i=1}^5a_ia_{i+1}a_{i+2}-\frac{1}{3}\big(3-\sum_{i=1}^5a_i^2\big)
+\frac{2}{3}-\frac{4}{27}\\
&=
\frac{1}{3}\big(
\sum_{i=1}^5(a_i^2-a_i^3)
+6\sum_{i=1}^5a_ia_{i+1}a_{i+2}
-\frac{4}{9}
\big),
\end{align*}}%
where the third equality follows from the fact that each product $\beta_i\beta_j$ occurs in exactly two of the four triples, while each $\beta_i$ occurs in exactly three of them, and the fourth equality follows from~\Cref{lem:traceQ}.

\begin{lemma}\label{lem:C5-cubic}
If $a_i\geq 0$ for each $i\in \Z_5$ and $\sum_{i=1}^5 a_i=1$, then $\psi_3(\gamma_2,\gamma_3,\gamma_4,\gamma_5)\leq 0$. 
Moreover, the equality holds only if $(a_1,a_2,a_3,a_4,a_5)=(\frac{1}{3},\frac{1}{3},\frac{1}{3},0,0)$ 
up to cyclic permutation.
\end{lemma}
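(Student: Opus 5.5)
The plan is to homogenize the inequality and then prove a cubic polynomial inequality on the nonnegative orthant. By the computation preceding the statement, $\psi_3(\gamma_2,\dots,\gamma_5)\le 0$ is equivalent to
$$F(a):=\sum_{i=1}^5(a_i^2-a_i^3)+6\sum_{i=1}^5 a_ia_{i+1}a_{i+2}\le \tfrac49 .$$
Using $\sum_i a_i=1$, I will rewrite $\sum_i a_i^2-\sum_i a_i^3=\sum_i a_i^2\bigl(\sum_{j\ne i}a_j\bigr)=\sum_{i\neq j}a_i^2a_j$. I will replace $\frac49$ by $\frac49(\sum_i a_i)^3$ and expand
$$\Bigl(\sum_i a_i\Bigr)^3=\sum_i a_i^3+3\sum_{i\ne j}a_i^2a_j+6e_3 .$$
Let $T=\sum_i a_ia_{i+1}a_{i+2}$ be the sum over the five ``consecutive'' triples. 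Let $N=\sum_i a_ia_{i+1}a_{i+3}$ be the sum over the five remaining triples, so $e_3=T+N$. Multiplying by $9$, the claim becomes
$$G(a):=4\sum_i a_i^3+3\sum_{i\ne j}a_i^2a_j+24N-30T\;\ge\;0\qquad(a\in\mathbb R_{\ge0}^5),$$
which is homogeneous of degree $3$. As a sanity check, at $(1,1,1,0,0)$ we get $G=12+18-30=0$, matching the claimed equality case.

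To prove $G\ge0$, I first note a three-variable fact. For $x,y,z\ge0$, AM--GM gives $4(x^3+y^3+z^3)\ge12xyz$ and $3\sum_{\rm sym}x^2y\ge18xyz$, so
$$4(x^3+y^3+z^3)+3\sum_{\rm sym}x^2y\ge 30xyz,$$
with equality iff $x=y=z$. This already settles the case where the support of $a$ lies in three consecutive indices. The case where the support contains a non-consecutive triple, or has size at most $2$, is trivial because then $T=0$.

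For full support, the cubes and mixed terms must be shared among the five consecutive triples, and a uniform split is too lossy at the tight point. So I will argue by extremality instead. Assume $\sum a_i=1$ and take a minimizer $a$ of $G$ on the simplex. If some $a_i=0$, then $G$ reduces to an expression in the remaining four cyclically arranged variables, in which only two consecutive triples survive. There I would split on which coordinate vanishes and use the three-variable inequality together with the leftover nonnegative terms ($24N$ and the mixed squares involving the fourth variable) to absorb the second triple. This should show that the minimum is $0$, attained only at three equal consecutive coordinates.

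If all $a_i>0$, I will use the Lagrange conditions $\partial_i G=\lambda$ for all $i$, or a smoothing step. The smoothing idea is to move mass between two coordinates, for instance along $a_{i+1}\pm t$, $a_{i+3}\mp t$ for a pair at distance $2$. Along such a line $G$ is a cubic in $t$, and I would aim to show it is concave whenever all coordinates are positive, so the minimum is pushed to the boundary $a_j=0$. Concretely this needs the coefficient of $t^2$, which is an explicit linear form in $a$, to be nonpositive for a suitable choice of pair. If that fails, my fallback is to write $G$ as a sum of nonnegative terms of the form $a_i(a_j-a_k)^2$ plus $a_ia_ja_k$-terms plus cubes, with the coefficients found by matching monomials. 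This is the part I expect to be the main obstacle: finding the right pairing or certificate, since it must handle the full cyclic structure while staying tight exactly at $(\frac13,\frac13,\frac13,0,0)$ and its cyclic shifts.

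Finally, equality forces equality in every step. On the boundary this means the AM--GM equality case $x=y=z$ on three consecutive coordinates with the others zero, which gives the stated equality case $(\frac13,\frac13,\frac13,0,0)$ up to cyclic permutation.
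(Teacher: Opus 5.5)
Your homogenization to $G\ge 0$ is correct, and the three-variable AM--GM bound does handle every point whose support has at most three elements. But the two cases that carry the lemma, support of size $4$ and full support, are not proved, and the tools you propose for them fail.

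\textbf{Full support.} On the simplex, the $t^2$-coefficient of $G$ along $a+t(e_p-e_{p+2})$ is $27(a_p+2a_{p+1}+a_{p+2})-18$. At the uniform point $a_i=\frac15$, which is a critical point of $G$ on the simplex by cyclic symmetry, this equals $\frac{18}{5}>0$ for every $p$. The adjacent transfers $e_p-e_{p+1}$ are also strictly convex there. So no two-coordinate smoothing can push points near the centre to the boundary. The Lagrange system has this interior solution as well, so it only helps if you locate and check all interior critical points, which you do not do.

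Your fallback certificate cannot exist in the stated form. Every term with positive coefficient must vanish at $(1,1,1,0,0)$ and its cyclic shifts. This excludes all cubes, all $a_i^2a_j$, and all consecutive triples. It also excludes every $a_i(a_j-a_k)^2$ with $j\ne k$, because the three equality patterns containing $i$ separate every pair $j\ne k$. Only the triples of $N$ remain, and they cannot produce $G$.

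The missing idea, used in the paper, is the four-coordinate move $(+\varepsilon,+\varepsilon,0,-\varepsilon,-\varepsilon)$ and its cyclic shifts. For $F=\frac{4-G}{9}$, the second difference along the $i$-th shift is $2\varepsilon^2(4-3a_i-9a_{i+1}+6a_{i+2}-9a_{i+3}-3a_{i+4})$, and these five sum to $4\varepsilon^2>0$. Hence at every interior point some direction is strictly concave for $G$, and no interior minimiser exists.

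\textbf{Support of size $4$} (say $a_5=0$). The split you sketch also fails as stated. Applying the three-variable bound to $(a_1,a_2,a_3)$ leaves
\[
a_4\bigl[4a_4^2+3a_4(a_1+a_2+a_3)+3(a_1^2+a_2^2+a_3^2)+24a_1(a_2+a_3)-30a_2a_3\bigr],
\]
which is negative for $a_1=0$, $a_2=a_3=1$ and small $a_4>0$. The mirror split fails symmetrically. So the two surviving triples must share the cubes and mixed terms in a way you have not specified.

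The paper resolves this case by setting $s=a_1+a_4$, writing $F=s(1-s)+(3s-2)a_1a_4+(1+3s)a_2a_3$, and splitting at $s=\frac23$. For $s\le\frac23$ one drops $a_1a_4$ and uses $a_2a_3\le\frac{(1-s)^2}{4}$ to get $F-\frac49\le\frac{(3s-7)(3s-1)^2}{36}\le 0$, which also pins down the equality case. For $s\ge\frac23$, the AM--GM bounds on both products give $F\le\frac{11}{36}<\frac49$.
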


\begin{proof}
Let $F:=\sum_{i=1}^5 (a_i^2-a_i^3)+6\sum_{i=1}^5 a_ia_{i+1}a_{i+2}$. The calculation above shows that it suffices to prove that $F\leq \frac{4}{9}$ for $(a_1,\ldots,a_5)$ satisfying that $a_i\geq 0$ for every $i\in \Z_5$ and $\sum_{i=1}^5a_i=1$.

We first claim that $F$ attains its maximum at some $(a_1,\ldots,a_5)$ for which at least one coordinate $a_i$ is zero. Assume to the contrary that $F$ attains its maximum at $\boldsymbol{a}=(a_1,\ldots,a_5)$ with $a_i>0$ for every $i\in \Z_5$. Let $\varepsilon$ be a real number such that $0<\varepsilon<\min_i a_i$. For each $i\in\Z_5$, let $\boldsymbol a_i^+
:=(a_i+\varepsilon,a_{i+1}+\varepsilon,a_{i+2},a_{i+3}-\varepsilon,a_{i+4}-\varepsilon)$ and $\boldsymbol a_i^-:=
(a_i-\varepsilon,a_{i+1}-\varepsilon,a_{i+2},a_{i+3}+\varepsilon,a_{i+4}+\varepsilon)$. Since $F(\boldsymbol{a})$ is the maximum, we have that for each $i$, $F(\boldsymbol {a}_i^{+})+F(\boldsymbol{a}_i^{-})-2F(\boldsymbol a)\leq 0$. However, since $\sum_i a_i=1$, a direct computation gives that $\sum_{i=1}^5 \big(F(\boldsymbol a_i^{+})+F(\boldsymbol a_i^{-})-2F(\boldsymbol a)\big)=\sum_{i=1}^5 \big(2\varepsilon^2(
4-3a_i-9a_{i+1}+6a_{i+2}-9a_{i+3}-3a_{i+4})\big)=4\varepsilon^2>0$, a contradiction. Hence, since $F$ is invariant under cyclic permutations of $(a_1,\ldots,a_5)$, by symmetry we may assume that
$a_5=0$. Then we have that $$F=(a_1+a_4)(a_2+a_3)+\bigl(3(a_1+a_4)-2\bigr)a_1a_4+\bigl(1+3(a_1+a_4)\bigr)a_2a_3.$$ We consider the following two cases.

If $0\leq a_1+a_4\leq \frac{2}{3}$, then $3(a_1+a_4)-2\leq 0$. Since $a_1a_4\geq0$ and $a_2a_3\leq \frac{(a_2+a_3)^2}{4}=\frac{(1-a_1-a_4)^2}{4}$, we obtain that
$F\leq(a_1+a_4)(1-a_1-a_4)+\frac{1+3(a_1+a_4)}{4}(1-a_1-a_4)^2.$
Consequently, $$F-\frac{4}{9}\leq\frac{\bigl(3(a_1+a_4)-7\bigr)\bigl(3(a_1+a_4)-1\bigr)^2}{36}\leq 0,$$
where the equality holds if and only if $a_1+a_4=\frac{1}{3}$, $a_1a_4=0$, and $a_2=a_3$; that is, by the symmetry of $a_1$ and $a_4$, $a_1=a_2=a_3=\frac{1}{3}$ and $a_4=0$.

If $\frac{2}{3}\leq a_1+a_4\leq 1$, then both $3(a_1+a_4)-2$ and $1+3(a_1+a_4)$ are nonnegative. Since $a_1a_4\leq\frac{(a_1+a_4)^2}{4}$ and $a_2a_3\leq\frac{(1-a_1-a_4)^2}{4}$, we have $$F\leq\frac{6(a_1+a_4)^3-11(a_1+a_4)^2+5(a_1+a_4)+1}{4}=:\varphi(a_1+a_4).$$
We then consider the function $\varphi(x)=\frac{6x^3-11x^2+5x+1}{4}$ for $x\in[\frac{2}{3},1]$. By calculation, we know $\varphi'(x)=\frac{18x^2-22x+5}{4}$. Then $\varphi'(x)<0$ for $x\in [\frac{2}{3},\frac{11+\sqrt{31}}{18})$ and $\varphi'(x)>0$ for $x\in (\frac{11+\sqrt{31}}{18},1]$. Thus, $\varphi(x)$ decreases on $[\frac{2}{3},\frac{11+\sqrt{31}}{18}]$ and increases on $[\frac{11+\sqrt{31}}{18},1]$. Therefore, the maximum of $\varphi(x)$ occurs at one endpoint of the interval $[\frac{2}{3},1]$. Since $\varphi(\frac{2}{3})=\frac{11}{36}<\frac{4}{9},$ and $\varphi(1)=\frac{1}{4}<\frac{4}{9},$ no equality occurs in this range. The equality conditions in the
first range give precisely the cyclic permutations of
$(\frac{1}{3},\frac{1}{3},\frac{1}{3},0,0)$.
\end{proof}

By \Cref{lem:C5-cubic}, $\psi_3(\gamma_2,\gamma_3,\gamma_4,\gamma_5)\leq 0.$ Since $H$ is a complete blow-up of $C_5$, we have $a_i>0$ for every
$i\in\Z_5$ and thus the equality case in \Cref{lem:C5-cubic} cannot occur. Therefore,
$\psi_3(\gamma_2,\gamma_3,\gamma_4,\gamma_5)<0$. If $\beta_2\ge \frac{1}{3}$, then, since $\beta_2\le\beta_3\le\beta_4\le\beta_5$, we have $\gamma_j\geq 0$ for all $2\leq j\leq 5$, and thus $\psi_3(\gamma_2,\gamma_3,\gamma_4,\gamma_5)\ge0$, a contradiction. So $\beta_2<\frac{1}{3}$. It then follows from~\Cref{lem:blowup-reduction-rk} that $\frac{\mu_2(H)}{n}\le\beta_2<\frac{1}{3}$.

\subsubsection[Gammak]{Complete blow-ups of Andr\'asfai graphs $\Gamma_k$ for $k\geq 3$}
Let $k\geq 3$. Let $V(\Gamma_k)=\{v_i\mid i\in \Z_{3k-1}\}$ and thus $N_{\Gamma_k}(v_i)=\{v_{i+k},v_{i+k+1},\ldots,v_{i+2k-1}\}.$ Let $V_i$ be the part of $H$ corresponding to the vertex $v_i\in V(\Gamma_k)$, and let $x_i:=\frac{|V_i|}{n}$ and $ d_i:=\sum_{t=k}^{2k-1}x_{i+t}$. 
Note that $x_i>0$, $\sum_{i\in \Z_{3k-1}} x_i=1$, and every vertex in $V_i$ has degree $nd_i$ in $H$. If $d_i<\frac{1}{3}$ for some $i$, then, since $H$ is non-complete, \Cref{lem:Fiedler} implies $\mu_2(H)\leq\delta(H)\leq nd_i<\frac{n}{3}$.
Thus we may assume that for each $i\in\Z_{3k-1}$, $d_i\geq \frac{1}{3}$. By the definition,
$d_{i-k}+d_i+d_{i+k}=\sum_{t=0}^{k-1}x_{i+t}+\sum_{t=k}^{2k-1}x_{i+t}+\sum_{t=2k}^{3k-1}x_{i+t}=\sum_{t=0}^{3k-1}x_{i+t}=1+x_i,$ and equivalently, 
\begin{equation}\label{equ:x_i}
(d_{i-k}-\frac{1}{3})+(d_i-\frac{1}{3})+(d_{i+k}-\frac{1}{3})=x_i. 
\end{equation}
For each $i\in \Z_{3k-1}$, we have that $0\leq \frac{d_i-\frac{1}{3}}{x_i}\leq 1$. Summing \eqref{equ:x_i} over all $i\in\Z_{3k-1}$ gives $1=\sum_{i\in\Z_{3k-1}} x_i=3\sum_{i\in\Z_{3k-1}}(d_i-\frac{1}{3}).$
Thus there exists an index $s$ such that $d_s>\frac{1}{3}$. Let $s':=s+k$.
Since $d_{s'-k}=d_s>\frac{1}{3}$, \eqref{equ:x_i} at this index shows that $\frac{d_{s'}-\frac{1}{3}}{x_{s'}}<1$.

Moreover, the two edges $v_{s'}v_{s'+k+1}$ and $v_{s'+2}v_{s'+2k}$ form an induced matching in $\Gamma_k$. Indeed, the four cross-differences of indices are $2$, $2k$, $k-1$, and $k-1$; since $k\geq 3$, none of which belongs to the set $\{k,k+1,\ldots,2k-1\}$. Noting that $ \frac{d_{s'}-\frac{1}{3}}{x_{s'}}+\frac{d_{s'+k+1}-\frac{1}{3}}{x_{s'+k+1}}+\frac{d_{s'+2}-\frac{1}{3}}{x_{s'+2}}+\frac{d_{s'+2k}-\frac{1}{3}}{x_{s'+2k}}<4,$ \Cref{lem:matching}, applied with $G=\Gamma_k$ and $r=\frac{1}{3}$, implies that $\mu_2(H)<\frac{n}{3}$.

\subsection{Complete blow-ups of a Vega graph}

We next consider the algebraic connectivity of complete blow-ups of Vega graphs.

\begin{proposition}\label{prop:vega-blowup}
Every $n$-vertex complete blow-up $H$ of a Vega graph satisfies
$\mu_2(H)<\frac{n}{3}$.
\end{proposition}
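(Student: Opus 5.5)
We follow the strategy used for $\Gamma_k$ with $k\ge 3$: reduce to the case where every normalized part degree is at least $\frac13$, and then find an induced matching in the Vega graph whose four endpoints have total excess less than $4$, so that \Cref{lem:matching} applies with $r=\frac13$.

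Let $F=\Upsilon_i^{\mu\nu}$ and let $(x_u)$ be the normalized part sizes of $H$. Since $F$ is not complete, neither is $H$. So if some $d_u<\frac13$, then \Cref{lem:Fiedler} gives $\mu_2(H)\le\delta(H)\le nd_u<\frac n3$, and we are done. We may therefore assume $d_u\ge\frac13$ for every $u\in V(F)$. It then suffices to exhibit an induced matching $u_1u_2$, $u_3u_4$ in $F$ with
\[
\sum_{j=1}^4\frac{d_{u_j}-\frac13}{x_{u_j}}<4 .
\]

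The key step is to derive linear identities analogous to \eqref{equ:x_i}. In $\Gamma_i$ we had $d_{t-k}+d_t+d_{t+k}=1+x_t$, which came from the three neighbourhoods covering $V(\Gamma_i)$ with $t$ counted twice. In the Vega graph I would look for triples (or small families) of vertices $p,q,r$ whose neighbourhoods cover $V(F)$, with overlaps containing a designated vertex. Natural candidates:
\begin{itemize}
\item triples built around the $6$-cycle $a,v,c,u,b,w$ and the vertices $x,y$, for instance $\{x,u,\cdot\}$ or $\{a,b,c\}$ combined with $y$;
\item triples $\{a,\,v_j,\,\cdot\}$ that combine a cycle vertex with Andr\'asfai vertices, using the fact that $a$ and $u$ see $\{v_0,\dots,v_{i-1}\}$, etc.
\end{itemize}
Each such cover yields an identity of the form
\[
\Bigl(d_p-\tfrac13\Bigr)+\Bigl(d_q-\tfrac13\Bigr)+\Bigl(d_r-\tfrac13\Bigr)=\sum_{w\in S}x_w-\sum_{w\in T}x_w ,
\]
where $S$ is the set of vertices covered twice and $T$ the set of vertices not covered. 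Whenever $T=\emptyset$, this gives $\frac{d_p-1/3}{x_p}\le 1$ for the relevant vertex; strict inequality occurs if another term in the identity is positive or another vertex lies in $S$. Bounds of this kind for every vertex would play the role of $0\le\frac{d-1/3}{x}\le1$ in the $\Gamma_k$ argument.

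Next I would choose the induced matching. For example, $xa$ and $yv$ look suitable: $a$ is not adjacent to $y$ or $v$? No — $a$ is adjacent to $v$ on the $6$-cycle, so that pair fails. Instead I would try $xa$ together with $yu$: $x\not\sim u$, $a\not\sim y$, $a\not\sim u$ (they are opposite on the $6$-cycle), and $x\not\sim y$ fails, since $xy$ is an edge. So $x$ and $y$ cannot be used together. A workable choice is then matchings mixing cycle vertices with Andr\'asfai vertices, such as $a\,v_{0}$ and $v\,v_{2i}$, after checking that the cross pairs are non-adjacent in $\Gamma_i$ and in the added edge sets. The cases $\mu=1$ (no $y$) and $\nu=1$ (no $v_{2i-1}$) would be handled separately, and for small $i$, especially the Gr\"otzsch graph $\Upsilon_2^{11}$, the matching would be checked directly.

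The main obstacle is showing that, for the chosen matching, the four ratios are each at most $1$ and not all equal to $1$. This requires the right covering identities together with a global averaging argument like $\sum_u x_u=1$, so that some strict inequality propagates to one of the four chosen vertices. If no single matching works uniformly, I would use two or three candidate matchings and show that equality in all of them would force a degenerate weight vector with some $x_u=0$, contradicting the fact that $H$ is a complete blow-up.
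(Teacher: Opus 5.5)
You have the paper's framework: reduce to $d_z\ge\frac13$ via \Cref{lem:Fiedler}, then apply \Cref{lem:matching} with $r=\frac13$ to an induced matching whose four ratios $\frac{d-1/3}{x}$ sum to less than $4$. But the proposal stops exactly where the proof has to start. No matching is fixed and no ratio bound is derived, so as written nothing is proved. Your one concrete candidate, $av_0$ and $vv_{2i}$, fails twice. First, $vv_{2i}\notin E(F)$, since the Andr\'asfai neighbours of $v$ are $v_i,\dots,v_{2i-1}$. Second, $a\sim v$ on the $6$-cycle, which is the same obstruction you had just noticed for $xa,yv$. The matching that works for every $(i,\mu,\nu)\neq(2,1,1)$ is $av$, $ub$, the two opposite edges of the induced $6$-cycle $avcubw$.

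Your covering identities also need repair. With $S=\{p\}$, it is an \emph{uncovered} vertex ($T\neq\emptyset$) that makes the bound strict; a second vertex in $S$ destroys the bound. Exact triples also need not exist. For example, in $\Upsilon_i^{11}$ with $i\ge3$, no two neighbours of $v$ have $v$ as their only common neighbour, so no triple controls the ratio at $v$.

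The paper instead uses weighted identities over all of $V(F)$, obtained from $\sum_z x_z=1$. Each of $x_a,x_b,x_u,x_v$ is written as a nonnegative combination of all the $d_t-\frac13$, where the vertex's own term has coefficient $i$, $i-\nu$, $i-\mu$, $i-\mu-\nu$ respectively. For $x_a$, the coefficients are $i$ on $a$, $i-1$ on $b,c,u,v,w$, and $1$ on $x$ and on each $v_j$ with $j\neq 2i-1$. This gives ratio bounds $\frac1i,\frac1{i-\nu},\frac1{i-\mu},\frac1{i-\mu-\nu}$, summing to at most $3$.

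For the Gr\"otzsch graph $\Upsilon_2^{11}$ the last bound is void, and ``checking directly'' does not suffice. Every induced $2$-matching must use some $q_j$, because the outer $C_5$ has no induced $2K_2$ and $s$ sees only the $q_j$. Yet the identity for $x_{q_j}$ contains no $d_{q_j}$ term, so the ratio at a prescribed $q_j$ cannot be bounded individually. This is where your averaging idea must be made precise. If $d_{q_j}-\frac13\ge x_{q_j}$ for all $j$, summing over $j$ and substituting the identities forces every $d_z=\frac13$, hence $x_{p_0}=0$, a contradiction. So some $q_{j^*}$ has ratio $<1$. The ratios at the $p_j$ are at most $1$ and at $s$ at most $\frac12$, so the weight-dependent induced matching $p_{j^*-3}p_{j^*-2}$, $q_{j^*}s$ finishes the proof. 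Without these identities and this choice of matchings, the proposal is an outline rather than an argument.
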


\begin{proof}
Let $F=\Upsilon_i^{\mu\nu}$ be a Vega graph and let $H$ be an $n$-vertex complete blow-up of $F$. Let $V_z$ be the part of
$H$ corresponding to $z\in V(F)$, and let
$x_z:=\frac{|V_z|}{n}$ and
$d_z:=\sum_{w\in N_F(z)}x_w$. Note that $x_z>0$,
$\sum_{z\in V(F)}x_z=1$, and every vertex in $V_z$ has degree
$nd_z$ in $H$. If $d_z<\frac{1}{3}$ for some $z\in V(F)$, then, since
$H$ is non-complete, \Cref{lem:Fiedler} implies
$\mu_2(H)\leq\delta(H)\leq nd_z<\frac n3$. Thus we may assume that
$d_z\geq\frac{1}{3}$ for every $z\in V(F)$. In what follows, we adopt the convention that any term indexed by a vertex deleted in the construction of $\Upsilon_i^{\mu\nu}$ is omitted. More precisely, terms indexed by $y$ are omitted when $\mu=1$, whereas those indexed by $v_{2i-1}$ are omitted when $\nu=1$. Thus, the corresponding terms are simply absent from the relevant expressions.

Assume first that $(i,\mu,\nu)\neq (2,1,1)$. Let
$$
S_0:=\{v_j\in V(F):j\ne2i-1\}\text{~and~}
S_1:=
\begin{cases}
\{v_j:j\ne0\},&\text{~if~}\nu=0,\\
\{v_j:j\ne i-1,2i-1\},&\text{~if~} \nu=1.
\end{cases}
$$
Note that the edges $av$ and $ub$ form an induced matching in $F$, since they are opposite edges of the induced $6$-cycle $avcubw$. By the definition of the Vega graph (\Cref{def:vega}) and the fact that $\sum_{z\in V(F)}x_z=1$, we have that
{\footnotesize
$$
\begin{aligned}
x_a={}&\sum_{t\in S_0}(d_t-\frac{1}{3})
+(d_x-\frac{1}{3})+i(d_a-\frac{1}{3})
+(i-1)\sum_{t\in\{b,c,u,v,w\}}(d_t-\frac{1}{3}),\\
x_b={}&\sum_{t\in S_1}(d_t-\frac{1}{3})
+(d_x-\frac{1}{3})
+(i-1)\sum_{t\in\{a,c,u,w\}}(d_t-\frac{1}{3})
+(i-\nu)(d_b-\frac{1}{3})
+(i-1-\nu)(d_v-\frac{1}{3}),\\
x_u={}&\sum_{t\in S_0}(d_t-\frac{1}{3})
+\mu(d_x-\frac{1}{3})
+(1-\mu)(d_y-\frac{1}{3})
+(i-1)\sum_{t\in\{a,b,c\}}(d_t-\frac{1}{3})\\
&+(i-\mu)(d_u-\frac{1}{3})
+(i-1-\mu)\sum_{t\in\{v,w\}}(d_t-\frac{1}{3}),\\
x_v={}&\sum_{t\in S_1}(d_t-\frac{1}{3})
+\mu(d_x-\frac{1}{3})
+(1-\mu)(d_y-\frac{1}{3})
+(i-1)\sum_{t\in\{a,c\}}(d_t-\frac{1}{3})\\
&+(i-1-\nu)(d_b-\frac{1}{3})
+(i-1-\mu)\sum_{t\in\{u,w\}}(d_t-\frac{1}{3})
+(i-\mu-\nu)(d_v-\frac{1}{3}).
\end{aligned}
$$
}
Since, for every $z\in V(F)$, $d_z\geq \frac{1}{3}$ and all the coefficients above are nonnegative, we have that
$$
\frac{d_a-\frac{1}{3}}{x_a}\leq\frac1i,\qquad
\frac{d_b-\frac{1}{3}}{x_b}\leq\frac1{i-\nu},\qquad
\frac{d_u-\frac{1}{3}}{x_u}\leq\frac1{i-\mu},\text{~~and~~}
\frac{d_v-\frac{1}{3}}{x_v}\leq\frac1{i-\mu-\nu}.
$$
If $i\geq3$, the sum of these four quantities is at most
$\frac{1}{3}+\frac{1}{2}+\frac{1}{2}+1<4$, while if $i=2$ and
$(\mu,\nu)\ne (1,1)$, it is at most $3$. Thus
$\frac{d_a-\frac{1}{3}}{x_a}+\frac{d_v-\frac{1}{3}}{x_v}
+\frac{d_u-\frac{1}{3}}{x_u}+\frac{d_b-\frac{1}{3}}{x_b}<4$.
By~\Cref{lem:matching}, applied with $G=F$ and $r=\frac{1}{3}$, we obtain that
$\mu_2(H)<\frac n3$.

\medskip

It remains to consider the case when $F=\Upsilon_2^{11}$, for which we use the labeling of $\Upsilon_2^{11}$ shown on the right side of~\Cref{fig:Vega211}.
Note that $p_0p_1p_2p_3p_4p_0$ is an induced $5$-cycle of $F$, $N_F(q_j)=\{p_{j-1},p_{j+1},s\}$, and
$N_F(s)=\{q_0,\ldots,q_4\}$, where all the indices are taken modulo $5$. A direct calculation gives
{\small
$$
\begin{aligned}
x_{p_j}=&
\sum_{t=0}^4 (d_{p_t}-\frac{1}{3})
+(d_s-\frac{1}{3})
+\sum_{t=j-1}^{j+1} (d_{q_t}-\frac{1}{3}),\\
x_{q_j}=&
\sum_{t=j-1}^{j+1} (d_{p_t}-\frac{1}{3})
+(d_{q_{j+2}}-\frac{1}{3})
+(d_{q_{j+3}}-\frac{1}{3})
+(d_s-\frac{1}{3}),\\
x_s=&
\sum_{t=0}^4\big(
(d_{p_t}-\frac{1}{3})
+(d_{q_t}-\frac{1}{3})\big)
+2(d_s-\frac{1}{3}).
\end{aligned}
$$}%
Thus $\frac{d_{p_j}-\frac{1}{3}}{x_{p_j}}\leq1$ and
$\frac{d_s-\frac{1}{3}}{x_s}\leq\frac{1}{2}$ for every $j\in\Z_5$.

We claim that $\frac{d_{q_j}-\frac{1}{3}}{x_{q_j}}<1$ for some
$j\in\Z_5$. Otherwise, $d_{q_j}-\frac{1}{3}\geq x_{q_j}$ for
every $j$, and thus
$d_{q_j}-\frac{1}{3}\geq
\sum_{t=j-1}^{j+1}(d_{p_t}-\frac{1}{3})
+(d_{q_{j+2}}-\frac{1}{3})
+(d_{q_{j+3}}-\frac{1}{3})
+(d_s-\frac{1}{3}).$
Summing over $j\in\Z_5$ gives
$\sum_{j=0}^4(d_{q_j}-\frac{1}{3})
\geq
\sum_{j=0}^4\sum_{t=j-1}^{j+1}
(d_{p_t}-\frac{1}{3})
+\sum_{j=0}^4(d_{q_{j+2}}-\frac{1}{3}) 
+\sum_{j=0}^4(d_{q_{j+3}}-\frac{1}{3})
+5(d_s-\frac{1}{3})=
3\sum_{j=0}^4(d_{p_j}-\frac{1}{3})
+2\sum_{j=0}^4(d_{q_j}-\frac{1}{3})
+5(d_s-\frac{1}{3})$, and
equivalently,
$$
3\sum_{j=0}^4 (d_{p_j}-\frac{1}{3})
+\sum_{j=0}^4 (d_{q_j}-\frac{1}{3})
+5(d_s-\frac{1}{3})\leq 0.
$$
Since $d_z\ge \frac{1}{3}$ for every $z\in V(F)$, all the terms
$d_{p_j}-\frac{1}{3}$, $d_{q_j}-\frac{1}{3}$, and $d_s-\frac{1}{3}$
are nonnegative, and thus $d_{p_j}=d_{q_j}=d_s=\frac{1}{3}$. Hence, we have that
$x_{p_0}=0$, contradicting the fact $x_{p_0}>0$. Therefore, there exists an index $j^*\in\Z_5$ such that $\frac{d_{q_{j^*}}-\frac{1}{3}}{x_{q_{j^*}}}<1$.

Let $t\in\Z_5$ such that $t:=j^*-3$. Now we consider the edges
$p_tp_{t+1}$ and $q_{j^*}s$ that form an induced matching of $F$. Indeed, $N_F(q_{j^*})=\{p_{t+2},p_{t+4},s\}$, and $s$ is adjacent to none of the vertices $p_i$. Therefore
$\frac{d_{p_t}-\frac{1}{3}}{x_{p_t}}
+\frac{d_{p_{t+1}}-\frac{1}{3}}{x_{p_{t+1}}}
+\frac{d_{q_{j^*}}-\frac{1}{3}}{x_{q_{j^*}}}
+\frac{d_s-\frac{1}{3}}{x_s}
<1+1+1+\frac{1}{2}<4$.
By \Cref{lem:matching}, applied with $G=F$ and $r=\frac{1}{3}$, we conclude
that $\mu_2(H)<\frac n3$.
\end{proof}

\subsection[mainThm]{Proof of \Cref*{thm:main}}

\begin{proof}[Proof of \Cref*{thm:main}]
Assume to the contrary that $G$ is an $n$-vertex triangle-free graph with $\mu_2(G)\geq \frac{n}{3}$ that is not bipartite. Thus $G$ is
connected and not a complete graph. By~\Cref{lem:Fiedler}, $\delta(G)\ge\mu_2(G)\ge \frac{n}{3}$. 

We first claim that $\delta(G)>\frac{n}{3}$.
Assume to the contrary that $\delta(G)=\mu_2(G)=\frac{n}{3}$. Let $S$ be a minimum vertex cut of $G$.
Since $G$ is connected and not complete, $|S|\le\delta(G)=\frac{n}{3}$.
Let $A$ be the vertex set of one component of $G-S$, and let
$B:=V(G)\setminus(A\cup S)$. We define a real vector $\boldsymbol{f}=(f_v)_{v\in V(G)}$ by
$$
f_v=
   \begin{cases}
      \frac{1}{|A|}, &v\in A,\\
      -\frac{1}{|B|}, &v\in B,\\
      0, &v\in S.
   \end{cases}
$$
Noting that $\langle\boldsymbol{f},\one\rangle=\sum_{v\in 
A} \frac{1}{|A|}-\sum_{v\in 
B} \frac{1}{|B|}=0$, the vector $\boldsymbol{f}$ is orthogonal to $\one$. By~\Cref{lem:complex-rayleigh}, 
we have that 
$$
   \frac{n}{3}=\mu_2(G)
   \le
   \frac{\boldsymbol{f}^{T}L(G)\boldsymbol{f}}
        {\langle \boldsymbol{f}^{T},\boldsymbol{f}\rangle}
   =
   \frac{\frac{e(A,S)}{|A|^2}+\frac{e(B,S)}{|B|^2}}{\frac{1}{|A|}+\frac{1}{|B|}}\leq |S|\le\delta(G)=\frac{n}{3},
$$
where the second equality is because there is no edge between $A$ and $B$, and the second inequality follows from the fact that $e(A,S)\le |A||S|, e(B,S)\le |B||S|$.
Thus every equality above should hold. In particular, $e(A,S)=|A||S|$ and $e(B,S)=|B||S|$, that is to say, every vertex of $S$ is adjacent to every vertex of $V(G)\setminus S$. Since $G$ is triangle-free, both $S$ and $V(G)\setminus S$ are independent. Hence, $G$ is a complete bipartite graph, a contradiction. Therefore, $\delta(G)>\frac{n}{3}$.

We now add possible edges to $G$, preserving triangle-freeness, until obtaining a maximal triangle-free graph $\widetilde{G}$.
By~\Cref{lem:edge-monotone}, $\mu_2(G)\le\mu_2(\widetilde G)$, and
clearly $\delta(\widetilde G)>\frac{n}{3}$. By~\Cref{thm:structure}, and since $G$ is not bipartite, $\widetilde G$ is a complete blow-up of a non-bipartite Andr\'asfai graph or a Vega graph. In both cases, it follows from~\Cref{prop:andrasfai-blowup} and \Cref{prop:vega-blowup} that $\mu_2(G)\le\mu_2(\widetilde G)<\frac{n}{3}$, a contradiction. 

\medskip
We next show that the constant $\frac{1}{3}$ is asymptotically best possible.
For $\ell\geq 1$, let $B_\ell$ be a complete blow-up of the $5$-cycle defined by $B_\ell:=C_5(\ell,1,1,\ell,\ell),$
where the part sizes are listed in the cyclic order. Clearly, $B_\ell$ is
triangle-free and non-bipartite, and $|V(B_\ell)|=3\ell+2$. By~\Cref{prop:andrasfai-blowup} with $k=2$, $\frac{\mu_2(B_\ell)}{|V(B_\ell)|}<\frac{1}{3}$. Moreover, the normalized part sizes of
$B_\ell=C_5(\ell,1,1,\ell,\ell)$ are $\Big(
\frac{\ell}{3\ell+2},\frac{1}{3\ell+2},\frac{1}{3\ell+2},\frac{\ell}{3\ell+2},\frac{\ell}{3\ell+2}\Big)$,
which converge to $\Big(\frac{1}{3},0,0,\frac{1}{3},\frac{1}{3}\Big)$
as $\ell\to\infty$. Hence, by the definition of the normalized quotient matrix $Q_\ell$, for sufficiently large $\ell$, $Q_\ell$ converges to
{\small
$$Q_\infty=
   \begin{pmatrix}
      \frac{1}{3}&0&0&0&-\frac{1}{3}\\
      0&\frac{1}{3}&0&0&0\\
      0&0&\frac{1}{3}&0&0\\
      0&0&0&\frac{1}{3}&-\frac{1}{3}\\
      -\frac{1}{3}&0&0&-\frac{1}{3}&\frac{2}{3}
   \end{pmatrix}.
$$}%
A direct calculation gives
$\operatorname{Spec}(Q_\infty)=\{0,\frac{1}{3},\frac{1}{3},\frac{1}{3},1\}$. Since the
eigenvalues of symmetric matrices depend continuously on their
entries, $\theta_2(Q_\ell)\to\frac{1}{3}$. Noting that the only parts of
$B_\ell=C_5(\ell,1,1,\ell,\ell)$ having size greater than one are
the first, fourth, and fifth parts (when $\ell\geq 2$), by \Cref{lem:blowup-reduction-rk}, we have that
$\frac{\mu_2(B_\ell)}{3\ell+2}=\min\{\theta_2(Q_\ell),\frac{\ell+1}{3\ell+2},\frac{2\ell}{3\ell+2}\}$.
Therefore, as $\ell\to\infty$, 
$\lim_{\ell\to\infty}\frac{\mu_2(B_\ell)}{|V(B_\ell)|}=\frac{1}{3}.
$ This proves that the constant $\frac{1}{3}$ in \Cref{thm:main} is
asymptotically best possible.
\end{proof}

\section[large-girth]{Graphs of odd girth at least $2k+1$ with $\mu_2(G)>\frac{4n}{6k-1}$}

In this section, we shall prove the following main result.

\medskip
\noindent
{\bf \Cref*{thm:main-girth}.} \emph{Let $k$ be an integer with $k\geq 3$. Every $n$-vertex graph $G$ of odd girth at least $2k+1$ with $\mu_2(G)>\frac{4n}{6k-1}$ is bipartite.}
\medskip

Our proof of~\Cref{thm:main-girth} is based on the following structural result.

\begin{theorem}[Lu and Wang~\cite{LW2026+}]
\label{thm:structure2}
For every integer $k\geq 2$, every $n$-vertex graph of odd girth at least $2k+1$ with $\delta(G)>\frac{4n}{6k-1}$ admits a homomorphism to $K_{4k/(2k-1)}$.
\end{theorem}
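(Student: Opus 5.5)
Since the statement is quoted from Lu and Wang~\cite{LW2026+} and used here as a black box, what follows is only the plan I would follow to prove it directly. The approach adapts the Andr\'asfai--Erd\H{o}s--S\'os cycle-counting argument, combined with a position map onto $\Z_{4k}$.

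\textbf{Reductions.} First I would reduce to \emph{maximal} graphs. Add edges to $G$ as long as the odd girth stays at least $2k+1$. Minimum degree can only increase, and a homomorphism of the enlarged graph restricts to one of $G$. If $G$ is bipartite we are done, because $K_2\subseteq K_{4k/(2k-1)}$. So I would assume $G$ is maximal and non-bipartite, and fix a shortest odd cycle $C=c_0c_1\cdots c_{2\ell}$ with $\ell\ge k$.

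\textbf{Counting on the cycle.} Since $C$ is a shortest odd cycle, every vertex outside $C$ has at most two neighbours on $C$. If it has two, they are at distance exactly $2$ along $C$; otherwise a shorter odd cycle would appear. Also, $C$ is induced. Counting edges between $V(C)$ and $V(G)$ then gives
\[(2\ell+1)\delta(G)\le 2n,\qquad\text{so}\qquad 2\ell+1<\tfrac{2n}{\delta(G)}<\tfrac{6k-1}{2}.\]
This confines the length of $C$ to a narrow window. Next I would partition $V(G)$ into the sets $N_j=\{v:N(v)\cap V(C)=\{c_{j-1},c_{j+1}\}\}$, the sets of vertices with exactly one neighbour on $C$, and the set $R$ of vertices with no neighbour on $C$. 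A weighted version of the same count controls the total size of $R$ and of the ``one-neighbour'' vertices; each of these loses at least one unit in the inequality above. Maximality then forces each $N_j$ to behave like a blow-up class: two vertices of $N_j$ are non-adjacent and have comparable neighbourhoods.

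\textbf{Defining the map.} I would construct $\varphi:V(G)\to\Z_{4k}$, identifying $V(K_{4k/(2k-1)})$ with $\Z_{4k}$, where $i\sim j$ iff $|i-j|\in\{2k-1,2k,2k+1\}$ modulo $4k$. The idea is to embed $C$ by a map that wraps $C$ ``almost twice'' around $\Z_{4k}$, in the manner of the proof of \Cref{lem:K_4k/2k-1ToC_2k+1}. Each $v\in N_j$ is sent to $\varphi(c_j)$. A vertex with a single neighbour $c_j$ is sent to a position opposite to $\varphi(c_j)$, chosen consistently with its other neighbours. Vertices of $R$ are placed using their neighbours in $\bigcup_j N_j$. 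Checking that every edge is mapped to a pair at circular distance in $\{2k-1,2k,2k+1\}$ reduces, for each edge type, to exhibiting a short closed walk of odd length less than $2k+1$ whenever the distance is wrong. That contradicts the odd girth assumption.

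\textbf{Main obstacle.} I expect the hardest step to be the vertices with at most one neighbour on $C$, especially those in $R$. The degree bound $\frac{4n}{6k-1}$ is exactly tight for blow-ups of $K_{4k/(2k-1)}$, so the counting leaves almost no slack. The sharp form of the argument will have to show that these vertices either have a well-defined ``position'' forced by their neighbourhoods in the classes $N_j$, or else induce a shorter odd cycle. My first attempt there would be to pick $C$ to maximize $\sum_{v}|N(v)\cap V(C)|$ among shortest odd cycles. Then I would argue via exchange of vertices of $C$ that every vertex of $R$ would otherwise yield a better cycle.
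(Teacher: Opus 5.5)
The paper gives no proof of this statement. It imports the result from Lu and Wang and uses it only as a black box in the proof of \Cref{thm:main-girth}. Your outline therefore has to stand on its own, and it does not. Besides leaving the hard step open, the one concrete rule you give for $\varphi$ is false.

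Take $k=2$ and $G=K_{8/3}$ itself. It is $3$-regular on $8$ vertices, $3>\frac{32}{11}$, and it is maximal triangle-free. Use the shortest odd cycle $C$ with $(c_0,c_1,c_2,c_3,c_4)=(0,3,6,1,4)$. Since $N(5)\cap V(C)=\{0,1\}=\{c_3,c_0\}$ and $N(7)\cap V(C)=\{3,4\}=\{c_4,c_1\}$, we have $5\in N_4$ and $7\in N_0$. Your rule then forces $\varphi(5)=\varphi(c_4)$ and $\varphi(7)=\varphi(c_0)$. But $N(2)=\{5,6,7\}$, so $\varphi(2)$ would have to be a common neighbour of the adjacent vertices $\varphi(c_4)$ and $\varphi(c_0)$, closing a triangle in $K_{8/3}$.

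This failure does not depend on your choices:
\begin{itemize}
\item It occurs for every homomorphism you might put on $C$.
\item Every $5$-cycle of $K_{8/3}$ is a translate of this one.
\item Your tie-break by $\sum_v|N(v)\cap V(C)|$ is vacuous in a regular graph.
\end{itemize}
Conceptually, by \Cref{lem:K_4k/2k-1ToC_2k+1} every endomorphism of $K_{4k/(2k-1)}$ is surjective, since a non-surjective one would factor through $K_{4k/(2k-1)}-w\to C_{2k+1}$. Hence it is bijective, so the distinct vertices $4=c_4$ and $5$ of $N_4$ can never share an image.

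The same example refutes two supporting claims.
\begin{itemize}
\item $N(4)=\{7,0,1\}$ and $N(5)=\{0,1,2\}$ are incomparable although $G$ is maximal. Maximality only says that non-adjacent vertices are joined by an even path of length at most $2k-2$. For two vertices of $N_j$ this holds automatically via $c_{j+1}$, so it tells you nothing about their neighbourhoods.
\item Some edge at $2$ is necessarily mapped wrongly, yet $G$ has no odd closed walk shorter than $5$. So the reduction ``wrong edge $\Rightarrow$ short odd closed walk'' is not available.
\end{itemize}

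The step you defer is where the whole theorem lives, and your quantitative picture of it is off. The cycle count only gives $2k+1\le 2\ell+1\le 3k-1$ and
$|S_1|+2|R|\le 2n-(2\ell+1)\delta(G)<\frac{(4k-6)n}{6k-1}$,
where $S_1$ is the set of one-neighbour vertices. So the vertices with at most one neighbour on $C$ are not a small remainder. In balanced blow-ups of $K_{4k/(2k-1)}$ they form a $\frac{2k-3}{4k}$ fraction of all vertices.

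Nor is $\frac{4n}{6k-1}$ tight for blow-ups of $K_{4k/(2k-1)}$. Balanced ones are $\frac{3n}{4k}$-regular, and $\frac{3}{4k}>\frac{4}{6k-1}$. The bound is attained by balanced blow-ups of $K_{(6k-1)/(3k-2)}$. This circular clique is $4$-regular, has odd girth $2k+1$, and has no homomorphism to $K_{4k/(2k-1)}$, since $\frac{6k-1}{3k-2}>\frac{4k}{2k-1}$.

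These graphs satisfy the only consequence of the degree hypothesis your plan extracts, namely $(2\ell+1)\delta(G)\le 2n$. The resulting window $2\ell+1\le 3k-1$ is also unchanged, because $\frac{6k-1}{2}$ is not an integer. So the exact threshold must be exploited inside the construction of $\varphi$, and your proposal contains no mechanism for doing so. As written, it is a research plan rather than a proof.
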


\subsection[Circular clique]{Complete blow-ups of a circular clique $K_{4k/(2k-1)}$ or $K_{(2k+1)/k}$}

We consider the algebraic connectivity of complete blow-ups of a circular clique $K_{4k/(2k-1)}$ or $K_{(2k+1)/k}$. Note that $K_{(2k+1)/k}$ is isomorphic to the odd cycle $C_{2k+1}$ for each positive integer $k$.

\begin{proposition}\label{prop:uniform K_{4k/2k-1}andK_{2k+1/k}}
For any integer $k$ with $k\geq 2$, every $n$-vertex complete blow-up $H$ of a circular clique $K_{(2k+1)/k}$ or $K_{4k/(2k-1)}$ satisfies $\mu_2(H)<\frac{4n}{6k-1}$.
\end{proposition}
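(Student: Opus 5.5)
The plan is to follow the same scheme used for \Cref{prop:andrasfai-blowup} and \Cref{prop:vega-blowup}, now with $r:=\frac{4}{6k-1}$. Let $F$ be $K_{(2k+1)/k}\cong C_{2k+1}$ or $K_{4k/(2k-1)}$, on vertex set $\Z_p$ with $p\in\{2k+1,4k\}$. Let $x_i$ be the normalized part sizes and $d_i$ the normalized part degrees. For $C_{2k+1}$ we have $d_i=x_{i-1}+x_{i+1}$. For $K_{4k/(2k-1)}$ we have $d_i=x_{i+2k-1}+x_{i+2k}+x_{i+2k+1}$.

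\textbf{Reduction.} If some $d_i<r$, then $H$ is non-complete (as $F$ is), so \Cref{lem:Fiedler} gives $\mu_2(H)\le nd_i<rn$. Hence we may assume $\delta_i:=d_i-r\ge 0$ for all $i$. It then suffices to find an induced matching $u_1u_2$, $u_3u_4$ in $F$ with $\sum_{j=1}^4\delta_{u_j}/x_{u_j}<4$, and to apply \Cref{lem:matching} with $r=\frac{4}{6k-1}$.

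\textbf{Linear identities.} Next, for each vertex $u$ I would write $x_u$ as a nonnegative combination of the $\delta_j$ in which $\delta_u$ appears with a coefficient $c_u\ge 1$. This gives $\delta_u/x_u\le 1/c_u$, as in \eqref{equ:x_i}. The source of such identities is summing $d_j$ over a set $J$ of vertices whose neighbourhoods cover $\Z_p$ evenly, with $u$ counted once more. Using $\sum x=1$, one gets $\sum_{j\in J}\delta_j = 1+x_u-|J|r$ (or a variant). The value $r=\frac{4}{6k-1}$ should be exactly the threshold making the constant term vanish for a suitable $|J|$.
\begin{itemize}
\item For $K_{4k/(2k-1)}$, the neighbourhoods $\{i+2k-1,i+2k,i+2k+1\}$ are intervals of length $3$. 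I expect sums of $d$ over arithmetic progressions with step about $3$ (or $2k\pm1$) to tile $\Z_{4k}$ with a prescribed double cover.
\item For $C_{2k+1}$, one alternative is to use that $\sum_i\delta_i=2-(2k+1)r<1$ for $k\ge2$.
\item Another alternative for $C_{2k+1}$ is to bypass the matching entirely. One would apply \Cref{lem:weighted-variational} with $g(i)=\omega^{i}$, $\omega=e^{2\pi \mathrm{i}/(2k+1)}$, or with a suitably short-period test function, and bound the Rayleigh quotient directly.
\end{itemize}

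\textbf{Averaging step.} Finally, I would average over a family of induced matchings that is invariant under rotation of $\Z_p$. Examples are $\{i,i+1\}$ and $\{i+3,i+4\}$ in $C_{2k+1}$ (induced since $k\ge 2$), and analogous pairs of edges $\{i,i+2k\}$ and $\{i+2,i+2k+2\}$ in $K_{4k/(2k-1)}$, whose induced-ness I would verify by checking the cross differences as in the $\Gamma_k$ case. The point is to show that not every such matching can have all four ratios close to $1$. Equality throughout the identities would force some $x_u=0$, exactly as in the $\Upsilon_2^{11}$ argument, contradicting $x_u>0$.

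\textbf{Main obstacle.} The hard step is finding the right identities for $K_{4k/(2k-1)}$ that tie $x_u$ to the $\delta$'s with the exact constant $\frac{4}{6k-1}$. The ratios $\delta_u/x_u$ are not linear, so the averaging must be done on the identities themselves, not on the ratios. If this becomes unwieldy, my fallback is the complex test function of \Cref{lem:weighted-variational} with a suitable root of unity. Its Rayleigh quotient is a quadratic form in $(x_i)$ on the simplex, and I would bound it by $r$ via an explicit SOS-type inequality.
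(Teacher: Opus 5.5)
\textbf{The primary route fails.} The per-vertex bounds $\delta_u/x_u\le 1/c_u$ that drove the Andr\'asfai and Vega cases are false here. For $C_{2k+1}$ the adjacency matrix is invertible, so there is exactly one identity of the kind you want. From $2x_u=\sum_t(-1)^t d_{u-2t-1}$ and $\sum_j\delta_j=\frac{4k-6}{6k-1}$ it is
$x_u=\sum_{t=0}^{2k}\bigl(\frac{(-1)^t}{2}+\frac{1}{2k-3}\bigr)\delta_{u-2t-1}$,
which has negative coefficients for every $k\ge3$.

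The ratio is in fact unbounded. Take $k=3$, $r=\frac{4}{17}$, and the blow-up with $x_0=\varepsilon$, $x_2=x_5=\frac{4}{17}$, $x_3=x_4=\eta$, $x_1=x_6=\frac12\bigl(\frac{9}{17}-\varepsilon-2\eta\bigr)$. Every $d_j\ge r$, yet $\delta_0/x_0=\bigl(\frac{5}{17}-\varepsilon-2\eta\bigr)/\varepsilon\to\infty$. The same phenomenon occurs for $K_{4k/(2k-1)}$ once $k\ge3$.

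This also breaks your averaging step. Over the $2k+1$ rotations of $\{i,i+1\},\{i+3,i+4\}$, the average of $\sum_{j=1}^4\delta_{u_j}/x_{u_j}-4$ equals $\frac{4}{2k+1}\sum_j\delta_j/x_j-4$, which is unbounded. In this example a good matching does exist ($\{1,2\},\{5,6\}$), but choosing one in general needs a global argument you have not supplied.

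Two further problems:
\begin{itemize}
\item The pair $\{i,i+1\},\{i+3,i+4\}$ is not induced in $C_5$, since $i$ and $i+4$ are adjacent. In fact $C_5$ has no induced matching of size two, so the case $k=2$, $F=C_5$ of the proposition is out of reach of the matching lemma entirely.
\item $\frac{4}{6k-1}$ is not a threshold at which some covering identity degenerates, as $\frac13$ was for $\Gamma_k$. It is inherited from the Lu--Wang minimum-degree theorem. The paper's estimates give $\mu_2(H)/n=O(k^{-2})$ for these blow-ups, so nothing is tight at $r$.
\end{itemize}

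\textbf{The fallback is the paper's method, but its content is missing.} The paper takes $m=2k+1$ for $C_{2k+1}$ and $m=2k$ for $K_{4k/(2k-1)}$. The test function is $\omega^{j}$ with $\omega=e^{2\pi\mathrm{i}/m}$ and the index read modulo $m$. For the ladder this function is constant on antipodal pairs, so antipodal edges cost nothing and every other edge costs $4\sin^2(\pi/m)$. By contrast, a $4k$-th root of unity in the natural labelling makes every edge cost about $4$, and already fails for the balanced blow-up. With the paper's choice the quotient is $4\sin^2(\pi/m)\,\mathcal W/(1-|z|^2)$. Here $\mathcal W$ is the weight $\sum x_ux_v$ of the non-antipodal edges, and $z=\sum_j a_j\omega^j$ with $a_j=x_j$, resp.\ $a_j=x_j+x_{j+2k}$.

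This is not something an unconstrained SOS-type inequality on the simplex can bound: with almost all mass on one edge the quotient is close to $1$. The hypothesis $d_j\ge r$ must be used twice. Write $s=1$ for the cycle and $s=2$ for the ladder.
\begin{itemize}
\item For $z$: the sum $\sum_j\bigl(a_{j-1}+(s-1)a_j+a_{j+1}-\frac{4s}{6k-1}\bigr)\omega^j$ equals $\bigl(s-1+2\cos\frac{2\pi}{m}\bigr)z$ and has nonnegative coefficients. This gives $\bigl(s-1+2\cos\frac{2\pi}{m}\bigr)|z|\le s+1-\frac{4sm}{6k-1}$, hence $|z|<\frac{3-s}{6}$.
\item For $\mathcal W$: use $x_j\le\frac{2k-1}{6k-1}$ for the cycle and $d_j\le\frac{2k+1}{6k-1}$ for the ladder, with a separate lower bound on the antipodal weight when $k=2$.
\end{itemize}
Combined with $\sin^2(\pi/m)<\frac{30}{3m^2+10}$, these give the proposition. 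Without these estimates your proposal proves the inequality for neither graph.
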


\begin{proof}
Let $H$ be an $n$-vertex complete blow-up of $G$ where $G\in \{C_{2k+1}, K_{4k/(2k-1)}\}$. Let $s$ be defined by $s=1$ if $G=C_{2k+1}$ and $s=2$ if $G=K_{4k/(2k-1)}$. Let $m=2k+2-s$. It is easy to see that $G$ has $sm$ vertices. We use the labeling
$V(G)=\{v_j:j\in\Z_{sm}\}$ in which $N_G(v_j)=\{v_{j-1},v_{j+1}\}$ if $s=1$, and $N_G(v_j)=\{v_{j+m-1},v_{j+m},v_{j+m+1}\}$ if $s=2$. 
Let $V_j$ be the part of $H$ corresponding to $v_j$, and define its
normalized size $x_j:=\frac{|V_j|}{n}$ and normalized part degree
$$d_j:=
   \begin{cases}
      x_{j-1}+x_{j+1},&s=1,\\
      x_{j+m-1}+x_{j+m}+x_{j+m+1},&s=2.
   \end{cases}
$$
All subscripts on $V_j,x_j,$ and $d_j$ are taken modulo $sm$.

Assume to the contrary that
$\mu_2(H)\geq \frac{4n}{6k-1}$. The graph $H$ is connected and
non-complete, so by~\Cref{lem:Fiedler}, for each $j\in\Z_{sm}$, $d_j\geq\frac{4}{6k-1}$.
Moreover, $\sum_{j=1}^{sm} x_j=1$ and $ \sum_{j=1}^{sm} d_j=s+1$. 

For $i\in\Z_m$, we define that $ a_i:=\sum_{t=0}^{s-1}x_{i+tm}$, that is to say,
$$
  a_i=
   \begin{cases}
      x_i,&\text{~if~}s=1,\\
      x_i+x_{i+2k},&\text{~if~}s=2.
   \end{cases}
$$ In both cases, $\sum_{i=1}^m a_i=1$.
And, with subscripts on $a_i$ taken modulo $m$,
\begin{equation}\label{eq:unified-block-degrees}
   \sum_{t=0}^{s-1}d_{i+tm}
   =a_{i-1}+(s-1)a_i+a_{i+1}
   \geq\frac{4s}{6k-1}.
\end{equation}
Let $\mathrm{i}=\sqrt{-1}$, and let
$$
   \omega:=e^{\frac{2\pi\mathrm{i}}{m}}
   \text{~~and~~} 
   z:=\sum_{j\in\Z_m}a_j\omega^j
   \text{~where~} \omega^j:=e^{\frac{2\pi\mathrm{i}}{m}j}.
$$ Note that $\omega^{m}=1$ and $\sum_{j\in\Z_{m}}\omega^j=\frac{1-\omega^{m}}{1-\omega}=0$.
We define an $n$-dimensional complex vector
$\boldsymbol{f}=(f_u)_{u\in V(H)}$ by setting
$$
   f_u:=\omega^j-z, 
   \text{~~for~~} u\in\bigcup_{t=0}^{s-1}V_{j+tm}.
$$
Furthermore, let $\mathcal W$ denote the
normalized total weight of the links joining consecutive blocks:
$\mathcal W:=\sum_{j\in\Z_m}\sum_{t=0}^{s-1}x_{j+tm}x_{j+1+(s-1-t)m}, 
$ that is to say, 
$$
  \mathcal W=
   \begin{cases}
      \sum\limits_{j\in \Z_{2k+1}} x_jx_{j+1},&\text{~if~}s=1,\\
      \sum\limits_{j\in\Z_{4k}}x_jx_{j+2k-1},&\text{~if~}s=2.
   \end{cases}
$$

Since every $a_j$ is positive and the numbers $\omega^j$ are not
all equal, $z$ is a strict convex combination of distinct points on
the unit circle. Hence, $|z|<1$. By the definitions of $a_j$, $z$,
and $\boldsymbol{f}$, we have that $\langle\boldsymbol{f}^*,\mathbf1\rangle
   =n\sum_i a_i(\overline{\omega^i}-\overline z)=0$ and $\langle\boldsymbol{f}^*,\boldsymbol{f}\rangle
   =n(1-|z|^2)>0$.
Thus we obtain that
$$\boldsymbol{f}^*L(H)\boldsymbol{f}=\sum_{u,v\in V(H)}\overline{f_u}L(H)_{uv}f_v=\sum_{uv\in E(H)}|f_u-f_v|^2=4n^2\mathcal W \sin^2(\frac{\pi}{m}),$$ where every unordered edge is counted once in the second last sum.  
\Cref{lem:weighted-variational} now yields
\begin{equation}\label{eq:unified-rayleigh-bound}
   \frac{\mu_2(H)}n
   \leq\frac{4\sin^2(\frac{\pi}{m})}{1-|z|^2}\mathcal W.
\end{equation}

\begin{claim}\label{lem:common-sin-bound}
We have that $\sin^2(\frac{\pi}{m})<\frac{30}{3m^2+10}$. 
\end{claim}

\begin{proof*}[Proof of \Cref*{lem:common-sin-bound}]
We aim to show that $\sin^2t<\frac{t^2}{1+\frac{t^2}{3}}$. Let
$F(t):=\frac{t^2}{\sin^2 t}-1-\frac{t^2}{3}$.
Since $\lim_{t\to 0}F(t)=0$, it suffices to prove that $F'(t)>0$ for $0<t<\pi$. We have that $F'(t)=\frac{2t}{\sin^2 t}-\frac{2t^2\cos t}{\sin^3 t}-\frac{2t}{3}=\frac{2t}{3\sin^3 t}\bigl(3(\sin t-t\cos t)-\sin^3 t\bigr)$.
Let $G(t):=3(\sin t-t\cos t)-\sin^3 t$. Then
$G'(t)=3\sin t\bigl(t-\sin t\cos t\bigr)$. Now let $H(t):=t-\sin t\cos t$. Since $H(0)=0$ and $H'(t)=1-\cos(2t)=2\sin^2t>0$ for $0<t<\pi$, we have $H(t)>0$. As $\sin t>0$, it follows that $G'(t)>0$. Together with $G(0)=0$, this gives $G(t)>0$, and thus $F'(t)>0$.

Therefore, $F(t)>0$, or equivalently,
$\sin^2 t<\frac{t^2}{1+\frac{t^2}{3}}$. Since $\pi^2<10$, taking $t=\frac{\pi}{m}$ we obtain 
$\sin^2(\frac{\pi}{m})<\frac{3\pi^2}{3m^2+\pi^2}<\frac{30}{3m^2+10},$
as claimed.
\end{proof*}

\begin{claim}\label{lem:unified-z-bound}
We have that $|z|<\frac{3-s}{6}$. 
\end{claim}

\begin{proof*}[Proof of \Cref*{lem:unified-z-bound}]
Since $\sum_{j\in\Z_{m}}\omega^j=0$, 
\eqref{eq:unified-block-degrees} and the triangle inequality give
{\small 
$$
\big(s-1+2\cos(\frac{2\pi}{m})\big)|z|=
   \big|\sum_{j\in \Z_m}
   \big(a_{j-1}+(s-1)a_j+a_{j+1}
   -\frac{4s}{6k-1}\big)\omega^j\big|\leq s+1-\frac{4sm}{6k-1}=\frac{(3-s)(2k-3)}{6k-1},
$$}%
where the last equality uses the fact that $s\in\{1,2\}$.  The coefficient of
$|z|$ is strictly larger than $\frac{6(2k-3)}{6k-1}$. Indeed, \Cref{lem:common-sin-bound} implies
$s-1+2\cos(\frac{2\pi}{m})=s+1-4\sin^2(\frac{\pi}{m})>s+1-\frac{120}{3m^2+10},
$
and thus
$$
s+1-\frac{120}{3m^2+10}-\frac{6(2k-3)}{6k-1}
=
\begin{cases}
\frac{8\big(5+(k-2)(24k-18)\big)}
 {(6k-1)\big(3(2k+1)^2+10\big)},&\text{~if~}s=1,\\
\frac{6\big(41+(k-2)(12k^2+54k-2)\big)}
 {(6k-1)(12k^2+10)},&\text{~if~}s=2.
\end{cases}
$$
Both quantities are positive for $k\geq 2$, proving the claim.
\end{proof*}

\begin{claim}\label{lem:unified-weight}
We have that $$\mathcal W\leq
   \begin{cases}
      \frac{(2k+1)^2}{(6k-1)^2},&\text{~if~}s=1,\\
      \frac{13k-11}{8(6k-1)},&\text{~if~}s=2.
   \end{cases}$$ 
\end{claim}

\begin{proof*}[Proof of \Cref*{lem:unified-weight}]
Suppose first that $s=1$. Iterating
$x_j=d_{j-1}-x_{j-2}$ around the odd cycle gives $2x_j=\sum_{t=0}^{2k}(-1)^t d_{j-(2t+1)}$.
Since multiplication by $2$ permutes $\Z_{2k+1}$, the
indices in this sum run through all vertices exactly once.
Since for each $j\in \Z_{2k+1}$, $d_j\geq\frac{4}{6k-1}$ and $\sum_{j\in \Z_{2k+1}} d_j=2$, we obtain that
$$
2x_j=\frac{4}{6k-1}+\sum_{t=0}^{2k}(-1)^t(d_{j-(2t+1)}-\frac{4}{6k-1})\leq\frac{4}{6k-1}+\sum_{t=1}^{2k+1}(d_t-\frac{4}{6k-1})=2-\frac{8k}{6k-1}.
$$
Hence, $x_j\leq \frac{2k-1}{6k-1}$.  Therefore,
{\footnotesize $$
2\mathcal{W}
=2\sum\limits_{j\in \Z_{2k+1}} x_jx_{j+1}
=\sum_{j\in \Z_{2k+1}} x_jd_j=\frac{4}{6k-1}+\sum_j x_j(d_j-\frac{4}{6k-1})
\leq\frac{4}{6k-1}+\frac{2k-1}{6k-1}(2-\frac{4(2k+1)}{6k-1})
=\frac{2(2k+1)^2}{(6k-1)^2}.
$$}

Now suppose that $s=2$. Since $d_j\geq\frac{4}{6k-1}$ for each $j\in \Z_{4k}$ and 
$d_{t+2k}=x_{t-1}+x_t+x_{t+1}$, applying it with $t=j-1,j+1,j+4$ gives
$$
   \frac{12}{6k-1}
   \leq d_{j+2k-1}+d_{j+2k+1}+d_{j+2k+4}
   =x_j+\sum_{h=-2}^{5}x_{j+h}
   \leq1+x_j.
$$
Hence, $x_j\geq \frac{13-6k}{6k-1}$. 
For $k=2$, this gives
$x_j\geq \frac{1}{11}$ and thus $ \sum_{j\in\Z_4}x_jx_{j+4}=\sum_{j\in\Z_4}(x_j-\frac{1}{11})(x_{j+4}-\frac{1}{11})+\frac{1}{11}-\frac{4}{121}\geq\frac{7}{121}>\frac{5}{88}$.
For $k\geq 3$, it follows that $\sum_{i\in\Z_{2k}}x_ix_{i+2k}\geq\frac{5(3-k)}{8(6k-1)}$, noting that the quantity
$\frac{5(3-k)}{8(6k-1)}$ is nonpositive.
Furthermore, since $d_j\geq\frac{4}{6k-1}$ for each $j\in \Z_{4k}$ and $\sum_{j=1}^{4k} d_j=3$, we have that $ d_j\leq3-\frac{4(4k-1)}{6k-1}
   =\frac{2k+1}{6k-1}.$
The antipodal and non-antipodal links partition the edge set, so
$$
\begin{aligned}
   \mathcal W+\sum_{i\in\Z_{2k}}x_ix_{i+2k}=\frac12\sum_{j\in\Z_{4k}}x_jd_j\leq\frac{2k+1}{2(6k-1)}.
\end{aligned}
$$
The asserted bound on $\mathcal W$ follows.
\end{proof*}

Finally, combining \eqref{eq:unified-rayleigh-bound} with \Cref{lem:common-sin-bound}, \Cref{lem:unified-z-bound}, and \Cref{lem:unified-weight} gives
$$
   \frac{\mu_2(H)}n
   <
   \begin{cases}
      \frac{135(2k+1)^2}
      {(6k-1)^2(3(2k+1)^2+10)},&\text{~if~}s=1,\\
      \frac{108(13k-11)}
      {7(6k-1)(12k^2+10)},&\text{~if~}s=2.
   \end{cases}
$$
Both bounds are smaller than $\frac{4}{6k-1}$.  Indeed, $4(6k-1)\bigl(3(2k+1)^2+10\bigr)-135(2k+1)^2=365+(k-2)(288k^2+276k+276)>0,$ whereas $ 84k^2-351k+367=1+(k-2)(84k-183)>0.$
Thus $\frac{\mu_2(H)}{n}< \frac{4}{6k-1}$, contradicting the assumption $\mu_2(H)\geq \frac{4n}{6k-1}$. This completes the proof.
\end{proof}

\subsection[mainThm2]{Proof of \Cref*{thm:main-girth}}

\begin{proof}[Proof of \Cref*{thm:main-girth}]
Let $k$ be an integer with $k\geq 3$. Assume to the contrary that $G$ is an $n$-vertex graph of odd girth at least $2k+1$ with $\mu_2(G)>\frac{4n}{6k-1}$, which is not bipartite. Since $\mu_2(G)>\frac{4n}{6k-1}>0$, $G$ is connected. Moreover, since $G$ has odd girth at least $2k+1\geq 7$, $G$ is triangle-free and not a complete graph. By
\Cref{lem:Fiedler}, $\delta(G)\ge\mu_2(G)>\frac{4n}{6k-1}$.
Hence, by~\Cref{thm:structure2}, there is a homomorphism $\varphi$ of $G$ to $K_{4k/(2k-1)}$.

We claim that $\varphi$ is a surjective homomorphism of $G$ to a graph $F$ such that $F$ is isomorphic to either $K_{4k/(2k-1)}$ or $C_{2k+1}$. Assume not and by~\Cref{lem:K_4k/2k-1ToC_2k+1}, the homomorphic image $\varphi(G)$ is a proper subgraph of $C_{2k+1}$ (which is homomorphic to $K_2$), contradicting the fact that $G$ is not bipartite. 

In both cases, the pre-image of each vertex of $F$ is not empty. Now we add all possible edges connecting $V_i$ and $V_j$ whenever $ij\in E(F)$ and the resulting graph $\widetilde{G}$ is a complete blow-up of $F$. For each part, the corresponding normalized part degree in $\widetilde{G}$ is at least $\frac{\delta(G)}{n}$, and hence is strictly larger than $\frac{4}{6k-1}$. Since $G\subseteq \widetilde{G}$, \Cref{lem:edge-monotone} and \Cref{prop:uniform K_{4k/2k-1}andK_{2k+1/k}} together imply $\mu_2(G)\le\mu_2(\widetilde{G})<\frac{4n}{6k-1}$, a contradiction. This completes the proof of the theorem.
\end{proof}

\section[Asymptotic bound]{An asymptotic bound when $k$ is large}

We prove the following result without attempting to optimize the absolute constant $3456$.

\medskip
\noindent
{\bf \Cref*{thm:main-girth_asymptotical}} {\emph{For any integer $k$ with $k\geq 22$, every $n$-vertex graph $G$ of odd girth at least $2k+1$ with $\mu_2(G)>\frac{3456n}{k^3}$ is bipartite. Moreover, the order $k^{-3}$ is asymptotically best possible.}}
\medskip

Note that for any large integer $n$ and any integer $s$ such that $\frac{n}{3}<s<\frac{n}{2}$, by~\Cref{lem:precise-alg-connectivity}\ref{lem:Kab} $\mu_2(K_{s,n-s})=s>\frac{n}{3}$. Thus, for any positive integer $k\geq 22$, $\mu_2(K_{s,n-s})>\frac{n}{3}>\frac{3456n}{k^3}$. That is to say, there exists a family of $n$-vertex graphs $G$ such that $\mu_2(G)>\frac{3456n}{k^3}$.

\begin{lemma}\label{lem:coarse-mass-resistance}
Let $m$ be a positive integer and let $x_0,\ldots,x_m$ be positive real numbers. There exists an index $j\in\{0,\ldots,m-1\}$ such that
$$
\big(\sum_{i=0}^m x_i\big)
\big(\sum_{i=0}^j x_i\big)
\big(\sum_{i=j}^{m-1}\frac{1}{x_i x_{i+1}}\big)
\geq
\frac{m^3}{216}.
$$
\end{lemma}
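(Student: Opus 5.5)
We prove the inequality by strong induction on $m$, splitting the path at its midpoint. First note that the quantity
\[
\Phi(j):=\Big(\sum_{i=0}^m x_i\Big)\Big(\sum_{i=0}^j x_i\Big)\Big(\sum_{i=j}^{m-1}\frac{1}{x_ix_{i+1}}\Big)
\]
is invariant under the rescaling $x_i\mapsto cx_i$: the first two factors each gain a factor $c$ and the third a factor $c^{-2}$. So we may assume $S:=\sum_{i=0}^m x_i=1$. Write $P_j:=\sum_{i=0}^j x_i$ and $R_j:=\sum_{i=j}^{m-1}\frac{1}{x_ix_{i+1}}$.

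\textbf{Step 1: a lower bound on $R_j$.} Let $t=m-j$. By AM--GM, $\frac{1}{x_ix_{i+1}}\ge \frac{4}{(x_i+x_{i+1})^2}$. By the power-mean (H\"older) inequality,
\[
\sum_{i=j}^{m-1}(x_i+x_{i+1})^{-2}\ \ge\ t^3\Big(\sum_{i=j}^{m-1}(x_i+x_{i+1})\Big)^{-2}.
\]
The inner sum is at most $2\sum_{i\ge j}x_i\le 2$, so
\[
R_j\ \ge\ \frac{4t^3}{4}\ =\ (m-j)^3 .
\]

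\textbf{Step 2: the midpoint dichotomy.} Set $j_1:=\lfloor m/2\rfloor$, so that $m-j_1\ge m/2$.

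If $P_{j_1}\ge \frac18$, Step 1 gives
\[
\Phi(j_1)\ \ge\ \tfrac18\Big(\tfrac m2\Big)^3\ =\ \tfrac{m^3}{64}\ \ge\ \tfrac{m^3}{216},
\]
and we are done.

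Otherwise $\varepsilon:=P_{j_1}<\frac18$. We apply the induction hypothesis to the shorter sequence $x_0,\dots,x_{j_1}$, of length parameter $m':=j_1<m$ (this requires $j_1\ge1$, i.e. $m\ge2$). It yields an index $j\le j_1-1$ such that
\[
\varepsilon\,P_j\sum_{i=j}^{j_1-1}\frac{1}{x_ix_{i+1}}\ \ge\ \frac{m'^3}{216}.
\]
Since $R_j$ contains all of these terms and more, we get
\[
\Phi(j)=P_jR_j\ \ge\ \frac{m'^3}{216\,\varepsilon}\ >\ \frac{8m'^3}{216}.
\]
We want this to be at least $m^3/216$, i.e. $8\lfloor m/2\rfloor^3\ge m^3$. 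This holds for even $m$.

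\textbf{Step 3: odd $m$ and base cases (main obstacle).} For odd $m$ we have $8\lfloor m/2\rfloor^3=(m-1)^3<m^3$, so the two thresholds must be rebalanced. The plan is to replace $\frac18$ by a threshold $\tau_m$ chosen so that both branches clear $m^3/216$. In the first branch we use the exact value $m-j_1=\lceil m/2\rceil$, which gives $\tau_m\lceil m/2\rceil^3\ge m^3/216$. In the second branch we need $\lfloor m/2\rfloor^3/\tau_m\ge m^3$. Such a $\tau_m$ exists exactly when
\[
216\,\lfloor m/2\rfloor^3\,\lceil m/2\rceil^3\ \ge\ m^6 ,
\]
i.e. $\lfloor m/2\rfloor\lceil m/2\rceil\ge m^2/6$. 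Since $\lfloor m/2\rfloor\lceil m/2\rceil=(m^2-1)/4$ for odd $m$, this holds for all $m\ge2$. This is precisely where the constant $216=6^3$ enters, and the floors here are the delicate point.

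The base case $m=1$ is direct: $\Phi(0)=S\cdot x_0\cdot\frac{1}{x_0x_1}=\frac{S}{x_1}\ge1\ge\frac1{216}$. This also covers the case $j_1=0$, which arises only when $m=1$.
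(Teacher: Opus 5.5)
Your proof is correct and is essentially the paper's argument: split at $q=\lfloor m/2\rfloor$, recurse on $x_0,\dots,x_q$ when its mass is small, and otherwise take $j=q$ with the H\"older/power-mean bound $S^2R_q\ge (m-q)^3$, the constant $216$ coming from $q(m-q)\ge m^2/6$. The paper uses the single threshold $(q/m)^3$ for all $m\ge 2$; this is an admissible choice of your $\tau_m$ and equals your $\frac18$ when $m$ is even, so your even/odd split is unnecessary.
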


\begin{proof}
We proceed by induction on $m$.
The case $m=1$ is immediate, since letting $j=0$ we have that $(\sum_{i=0}^1 x_i)x_0\frac{1}{x_0x_1}=\frac{x_0+x_1}{x_1}>1>\frac{1}{216}$.

Assume now $m\geq 2$. Let $q:=\left\lfloor\frac m2\right\rfloor,$ and $r:=m-q.$ Note that $\frac{m}{3}\leq q<m$ and $\frac{m}{2}\leq r< m$.

We first consider the case when $\sum_{i=0}^{q}x_i\leq\big(\frac qm\big)^3\sum_{i=0}^{m}x_i$. Applying the induction hypothesis to the sequence $x_0,\ldots,x_q$, there exists $j<q$ such that
$(\sum_{i=0}^{q}x_i)
(\sum_{i=0}^j x_i)
(\sum_{i=j}^{q-1}\frac{1}{x_ix_{i+1}})
\geq\frac{q^3}{216}.$
Since $\sum_{i=0}^{m}x_i\geq \frac{m^3}{q^3}\sum_{i=0}^{q}x_i$, we obtain that
$$
\big(\sum_{i=0}^m x_i \big)
\big(\sum_{i=0}^j x_i\big)
\big(\sum_{i=j}^{m-1}\frac{1}{x_ix_{i+1}}\big)
\geq \frac{m^3}{q^3}\big(\sum_{i=0}^q x_i \big)\big(\sum_{i=0}^j x_i\big)\big(\sum_{i=j}^{m-1}\frac{1}{x_ix_{i+1}}\big)
\geq \frac{m^3}{216}.
$$
where the last inequality follows from the fact that every term $\frac{1}{x_ix_{i+1}}$ is positive and $m>q$. 

It remains to consider the case when $\sum_{i=0}^{q}x_i> \big(\frac qm\big)^3\sum_{i=0}^{m}x_i$. In this case, we shall prove that $(\sum_{i=0}^m x_i)
(\sum_{i=0}^q x_i)
(\sum_{i=q}^{m-1}\frac{1}{x_i x_{i+1}}) \geq\frac{m^3}{216}$, that is the case of $j=q$. On the one hand, by H\"older's inequality,
$
\big(\sum_{i=q}^{m-1}\frac{1}{x_ix_{i+1}}\big)
\big(\sum_{i=q}^{m-1}\sqrt{x_ix_{i+1}}\big)^2
\geq r^3.
$
On the other hand, the arithmetic--geometric mean inequality gives
$
\sum_{i=q}^{m-1}\sqrt{x_ix_{i+1}}
\leq
\frac{1}{2}\sum_{i=q}^{m-1}(x_i+x_{i+1})
\leq
\sum_{i=q}^{m}x_i
\leq\sum_{i=0}^{m}x_i.
$
Therefore,
$(\sum_{i=0}^{m}x_i)^2 (\sum_{i=q}^{m-1}\frac{1}{x_ix_{i+1}})
\geq r^3.$
Consequently,
{\small 
$$
\big(\sum_{i=0}^{m}x_i\big)\big(\sum_{i=0}^{q}x_i\big)\big(\sum_{i=q}^{m-1}\frac{1}{x_ix_{i+1}}\big)
\geq \frac{\sum_{i=0}^{q}x_i}{\sum_{i=0}^{m}x_i}r^3
>
m^3 \big(\frac qm\big)^3 \big(\frac{r}{m}\big)^3\
\geq m^3\big(\frac{1}{3}\big)^3\big(\frac{1}{2}\big)^3
=\frac{m^3}{216}.
$$}%
It completes the proof of the lemma.
\end{proof}

We now use this lemma to obtain an upper bound of the algebraic connectivity of a graph in terms of its diameter. For a connected graph $G$, the \emph{diameter} of $G$ is the maximum distance between any two vertices of $G$.

\begin{theorem}\label{thm:diameter-bound}
Every $n$-vertex connected graph with diameter $D\geq 1$ satisfies $\mu_2(G)\leq \frac{3456 n}{D^3}$.
\end{theorem}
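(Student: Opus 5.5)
Fix a vertex $w$ whose eccentricity equals $D$, and let $L_i:=\{v: d(w,v)=i\}$ for $i=0,\ldots,D$ be the distance layers. Each $L_i$ is nonempty and edges only join the same or consecutive layers. Put $x_i:=|L_i|$, so $\sum_{i=0}^D x_i=n$, and apply \Cref{lem:coarse-mass-resistance} with $m=D$ to get an index $j\in\{0,\ldots,D-1\}$ with
$$n\Big(\sum_{i=0}^j x_i\Big)\Big(\sum_{i=j}^{D-1}\frac{1}{x_ix_{i+1}}\Big)\ge \frac{D^3}{216}.$$
We then build a test vector $\boldsymbol f$ that is constant on each layer. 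It is constant on $L_0\cup\cdots\cup L_j$, and from layer $j$ to layer $D$ it increases like an electrical potential along a path whose resistances are $R_i:=\frac{1}{x_ix_{i+1}}$. Since there are at most $x_ix_{i+1}$ edges between $L_i$ and $L_{i+1}$, this potential gives a small Dirichlet energy. Concretely, set $g(v)=0$ for $v\in L_0\cup\cdots\cup L_j$, and for $v\in L_\ell$ with $\ell>j$ set $g(v)=\frac{1}{R}\sum_{i=j}^{\ell-1}R_i$, where $R:=\sum_{i=j}^{D-1}R_i$. Then $g$ takes values in $[0,1]$ and equals $1$ on $L_D$. Its energy satisfies
$$\sum_{uv\in E}(g(u)-g(v))^2\le\sum_{i=j}^{D-1}x_ix_{i+1}\frac{R_i^2}{R^2}=\frac1R.$$

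For the denominator, let $\boldsymbol f:=g-\bar g\one$, where $\bar g$ is the average of $g$. This vector is orthogonal to $\one$, and $\|\boldsymbol f\|^2=\sum_v (g(v)-\bar g)^2$. We lower-bound this variance using the set $S:=L_0\cup\cdots\cup L_j$ with $|S|=\sum_{i\le j}x_i=:A$, on which $g=0$. The difficulty is that $g$ might be close to $0$ on most other vertices as well, which would make the variance tiny. We therefore use the identity $\sum_v(g(v)-\bar g)^2=\frac{1}{2n}\sum_{u,v}(g(u)-g(v))^2\ge \frac1n\,A\sum_{v\notin S}g(v)^2$. This alone is not enough, so we modify the construction: instead of using $g$ directly, take $f=\min(g,\cdot)$ or use $1-g$ symmetrically. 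Alternatively, the simpler route is to pick a threshold layer $\ell^*$ at which the potential reaches $\frac12$ and restrict the comparison to $L_{\ell^*}\cup\cdots\cup L_D$. The cleanest fix we would try first is to accept a Rayleigh quotient of the form $\mu_2\le \frac{\text{energy}}{\text{variance}}$ and bound
$$\text{variance}\ \ge\ \frac{A\,(n-A)}{n}\cdot(\text{typical gap})^2,$$
adjusting the test function so that it is exactly $0$ on $S$ and exactly $1$ on $L_D$. Here the factor $n\cdot A\cdot R$ from \Cref{lem:coarse-mass-resistance} is what should appear: we want $\mu_2\le \frac{c\,n}{A R\cdot(\cdot)}$, i.e.\ $\mu_2\le \frac{16\cdot 216\, n}{D^3}=\frac{3456n}{D^3}$. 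The factor $16=3456/216$ suggests that the losses should come from a $\frac12$ threshold squared, twice. So we would use the potential truncated at level $\frac12$, which loses a factor $4$ in the variance via the $\frac12$-gap squared, and a further factor $4$ from centering or symmetrization.

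The key step is then to verify this inequality: the energy is at most $1/R$, and the variance is at least $\frac{A}{16 n}$ times $n$ (in the appropriate normalization), possibly after choosing between $g$ and $1-g$ depending on which side carries more mass. Combined with \Cref{lem:complex-rayleigh}, this gives $\mu_2(G)\le \frac{16 n}{A R}\cdot\frac{1}{n}\cdot n\le \frac{3456n}{D^3}$. The main obstacle is the variance lower bound. Because \Cref{lem:coarse-mass-resistance} controls only the mass $A$ on the zero side, we must ensure that the far side is not negligible in the $\ell^2$ sense. We would handle this by noting that $\|\boldsymbol f\|^2\ge \min_c\sum_v(g(v)-c)^2$, and that either $c\ge\frac12$, in which case the $A$ vertices of $S$ contribute $A/4$, or $c<\frac12$. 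In the latter case we would use the fact that the lemma's factor $n=\sum x_i$ (rather than $A$) was chosen to absorb this. If that fails, we would rerun the induction in \Cref{lem:coarse-mass-resistance}'s style directly on the Rayleigh quotient.
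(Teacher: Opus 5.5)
Your setup is sound: distance layers from a vertex of eccentricity $D$, a test function constant on layers, and the energy bound $\le 1/R$ via $e(L_i,L_{i+1})\le x_ix_{i+1}$. However, the proof has a genuine gap exactly where you flag it, and none of the fixes you list can close it. Applying \Cref{lem:coarse-mass-resistance} to the whole sequence $x_0,\dots,x_D$ controls (total mass)$\times$(near mass $A$)$\times$(resistance $R$ from $L_j$ to $L_D$). The Rayleigh quotient instead needs (near mass)$\times$(far mass)$\times$(resistance), and the far side, where $g$ is away from $0$, may be negligible.

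Here is a concrete counterexample. Take layer sizes $x_0=1$, $x_1=N:=D^2$, and $x_i=1$ for $2\le i\le D$, with all of $L_1$ joined to the vertex of $L_2$ and a path after that. For $j=D-1$ we get $R=1$ and $nAR=(D^2+D)(D^2+D-1)\ge D^4\ge D^3/216$, so this $j$ is a legitimate output of the lemma, and your argument must handle it. But then $g$ is the indicator of the single vertex of $L_D$. Its energy is $1$ and its variance is $1-\frac1n$, so the Rayleigh quotient exceeds $1$, while $\frac{3456n}{D^3}=\frac{3456(D^2+D)}{D^3}\to0$. Your target inequality ``variance $\ge A/16$'' fails by a factor of order $D^2$.

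The proposed fixes do not help. Truncating at level $\frac12$, passing to $1-g$, or optimizing the centering constant all leave the set where $g$ differs from its value on $S$ just as small. The factor $n=\sum x_i$ in the lemma cannot absorb the problem either, because it bounds the far mass only from above.

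The missing idea is to split at the median layer before invoking the lemma. Let $p$ be the least index with $P:=\sum_{i\le p}x_i\ge\frac n2$. Then $B:=\sum_{\ell\ge p}x_\ell\ge\frac n2$ as well, and one of $p$, $D-p$ is at least $\frac D2$; say it is $p$ (otherwise reverse the sequence). Apply the lemma only to $x_0,\dots,x_p$ with $m=p$, which gives $PAR\ge p^3/216$ with $R$ the resistance from $L_j$ to $L_p$. Take the test function equal to $1$ on $L_0\cup\dots\cup L_j$, harmonic between $L_j$ and $L_p$, and equal to $0$ on all of $L_p\cup\dots\cup L_D$. The whole far half now sits at the extreme value, so the variance is at least $\frac1n AB$. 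This yields $\mu_2\le\frac{n}{RAB}\le\frac{216\,nP}{p^3B}\le\frac{432\,n}{p^3}\le\frac{3456\,n}{D^3}$. So $3456=216\cdot2\cdot8$: the factor $2$ is the mass ratio $P/B$, and the $8$ comes from $p\ge D/2$. It does not come from a squared $\frac12$-threshold.
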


\begin{proof}
Let $u,v\in V(G)$ be two vertices such that $d_G(u,v)=D$. For each $0\leq i\leq D$, let
$L_i:=\{x\in V(G):d_G(u,x)=i\}$, called the $i$th layer of $G$ (with respect to $u$). Note that each $L_i$ is nonempty. Moreover, every edge of $G$ has both endpoints in
the same layer or in two consecutive layers, because adjacent vertices have
distances from $u$ differing by at most one.

Let $H$ be obtained from $G$ by adding edges such that every $G[L_i]$ forms a clique and every pair $(L_i,L_{i+1})$ is completely bipartite. Then $G\subseteq H$. By~\Cref{lem:edge-monotone}, $\mu_2(G)\leq\mu_2(H)$. Moreover, note that $d_H(u,v)=D$, since every edge of $H$ joins vertices in the same layer or in consecutive layers. So we now aim to bound $\mu_2(H)$.

For each $0\leq i\leq D$, let $a_i:=\frac{|L_i|}{n}$. Then $a_i>0$ and $\sum_{i=0}^D a_i=1$. Let $p$ be the smallest index such that $\sum_{i=0}^{p}a_i\geq\frac{1}{2}$. Note that at least one of $p$ and $D-p$ is at least $\frac{D}{2}$. We have that  
\begin{equation}\label{eq:median-mass}
\frac{1}{2}\leq \sum_{i=0}^{p}a_i\leq 1, ~~~  \frac{1}{2}\leq\sum_{i=p}^{D}a_i\leq 1~~~ 
\frac{\sum_{i=0}^{p}a_i}{\sum_{i=p}^{D}a_i}\leq 2, \text{~~~and~~~}
\frac{\sum_{i=p}^{D}a_i}{\sum_{i=0}^{p}a_i}\leq 2.
\end{equation}
We first assume $p\geq\frac{D}{2}.$ Applying \Cref{lem:coarse-mass-resistance} to the sequence $a_0,\ldots,a_p$, there exists $j<p$ such that 
\begin{equation}\label{eq:AR-lower}
\Big(\sum_{i=0}^{p}a_i\Big)
\Big(\sum_{i=0}^j a_i\Big) \Big(\sum_{r=j}^{p-1}\frac{1}{a_ra_{r+1}}\Big)\geq\frac{p^3}{216}.
\end{equation}

We define a function $f:V(H)\to\mathbb R$ by assigning the value $f_i$ to every vertex in $L_i$, where $f_i=1$ for $0\leq i\leq j$, $f_i=0$ for $p\leq i\leq D$, and $f_i=\frac{\sum_{r=i}^{p-1}\frac{1}{a_ra_{r+1}}}{\sum_{r=j}^{p-1}\frac{1}{a_ra_{r+1}}}$ for $j<i<p$. Let $f^*:=\frac{1}{n}\sum_{u\in V(H)}f(u)=\sum_{i=0}^{D}a_i f_i.$ Since $f$ is constant on each layer $L_i$, the edges inside one layer contribute nothing to the Laplacian quadratic form. Moreover, there are
$|L_i||L_{i+1}|=n^2a_i a_{i+1}$ edges between $L_i$ and $L_{i+1}$. Viewing $H$ as a complete blow-up of itself (that is, for every $v\in V(H)$, the normalized part size $x_v$ satisfies $x_v=\frac{1}{n}$), we apply \Cref{lem:weighted-variational} to the non-constant function $f(v)$ and obtain that
\begin{equation}\label{eq:weighted-path-Rayleigh}
\frac{\mu_2(H)}{n}
\leq
\frac{\sum\limits_{i=0}^{D-1}a_i a_{i+1}(f_{i+1}-f_i)^2}{\sum\limits_{i=0}^{D}a_i(f_i-f^*)^2}.
\end{equation}
Note that for $j\leq i<p$, $f_i-f_{i+1}=\frac{1}{a_ia_{i+1}\sum\limits_{i=j}^{p-1}\frac{1}{a_ra_{r+1}}},$ and for $0\le i<j$ and $p \le i<D$, $f_i-f_{i+1}=0$. Hence,
\begin{equation}\label{eq:test-energy}
\sum_{i=0}^{D-1}a_ia_{i+1}(f_{i+1}-f_i)^2
=
\frac{1}{\sum\limits_{i=j}^{p-1}\frac{1}{a_ia_{i+1}}}.
\end{equation}
We next consider the denominator of the fraction of~\eqref{eq:weighted-path-Rayleigh}. Since $\sum_{i=0}^{D}a_i=1$ and
$f^*=\sum_{i=0}^{D}a_if_i$, we have
\begin{equation}\label{eq:test-variance}
\begin{aligned}
\sum_{i=0}^{D}a_i(f_i-f^*)^2
&=\sum_{i=0}^{D}a_if_i^2-(f^*)^2
=\frac{1}{2}\sum_{i=0}^{D}\sum_{\ell=0}^{D} a_i a_\ell(f_i-f_\ell)^2
=\sum_{0\le i<\ell\le D}
a_i a_\ell(f_i-f_\ell)^2\\
&\geq
\sum_{\substack{0\le i\le j\\ p\le \ell\le D}}
a_i a_\ell(f_i-f_\ell)^2=
\sum_{\substack{0\le i\le j\\ p\le \ell\le D}}
a_i a_\ell=
\big(\sum_{i=0}^{j}a_i\big)
\big(\sum_{\ell=p}^{D}a_\ell\big).  
\end{aligned}
\end{equation}
Indeed, the second equality follows from
$\frac{1}{2}\sum_{i,\ell}a_i a_\ell(f_i-f_\ell)^2=\frac{1}{2}\sum_{i,\ell}a_i a_\ell
(f_i^2+f_\ell^2-2f_if_\ell)
=\sum_i a_i f_i^2-(\sum_i a_i f_i)^2
=\sum_i a_i f_i^2-(f^*)^2$, and the inequality is because for $0\leq i\leq j$ we have $f_i=1$, while for $p\leq\ell\leq D$ we
have $f_\ell=0$. 
Combining~\eqref{eq:weighted-path-Rayleigh}, \eqref{eq:test-energy}, and
\eqref{eq:test-variance}, we obtain
$$
\frac{\mu_2(H)}{n}
\leq \frac{\sum\limits_{i=0}^{D-1}a_i a_{i+1}(f_{i+1}-f_i)^2}{\sum\limits_{i=0}^{D}a_i(f_i-f^*)^2}
\leq \frac{1}{\big(\sum\limits_{i=j}^{p-1}\frac{1}{a_ia_{i+1}}\big) \big(\sum\limits_{i=0}^j a_i\big) 
\big(\sum\limits_{i=p}^{D}a_i \big)}.
$$
Applying \eqref{eq:median-mass} and \eqref{eq:AR-lower}, and since $p\geq \frac{D}{2}$,
$$
\frac{\mu_2(H)}n
\leq  \frac{216\sum\limits_{i=0}^{p}a_i}{p^3\sum\limits_{i=p}^{D}a_i}
\leq  \frac{432}{p^3}
\leq  \frac{3456}{D^3}.
$$

If instead $D-p\geq \frac{D}{2}$, we apply the same argument to the reversed
sequence $a_D,a_{D-1},\ldots,a_p$.  This gives exactly the same bound.  Thus
in all cases $\mu_2(G)\leq\mu_2(H)\leq\frac{3456n}{D^3}$.
\end{proof}

The following lemma is a folklore; for example, see Proposition~23.1 in~\cite{B1993}.

\begin{lemma}{\rm \cite{B1993}}
\label{lem:odd girth-diameter}
Let $k$ be a positive integer. Every connected graph $G$ of odd girth at least $2k+1$ has diameter at least $k$.
\end{lemma}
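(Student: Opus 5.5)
Throughout, I read ``odd girth at least $2k+1$'' as saying that $G$ contains an odd cycle, all of length at least $2k+1$. This is how the lemma is used: for bipartite graphs the statement fails, since $K_2$ has diameter $1$. Under this reading, the plan is to exhibit two vertices at distance at least $k$ on a shortest odd cycle.

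Let $C=v_0v_1\cdots v_{g-1}v_0$ be a shortest odd cycle of $G$, so $g$ is odd and $g\geq 2k+1$. I will show that $d_G(v_0,v_k)\geq k$, which immediately gives $\operatorname{diam}(G)\geq k$.

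Suppose instead that there is a $v_0$--$v_k$ path $P$ in $G$ of length $d<k$. The cycle $C$ splits into two $v_0$--$v_k$ arcs, of lengths $k$ and $g-k$. Since $g$ is odd, these two lengths have different parities. Hence one of them, call its length $\ell\in\{k,g-k\}$, satisfies that $d+\ell$ is odd. Concatenating $P$ with that arc gives a closed walk of odd length $d+\ell$. This length is strictly less than $g$ in either case:
\begin{itemize}
\item if $\ell=k$, then $d+k<2k<g$;
\item if $\ell=g-k$, then $d+g-k<g$ because $d<k$.
\end{itemize}

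The only step needing a standard fact is passing from a walk to a cycle: every closed walk of odd length contains an odd cycle of length at most the walk's length. I would prove this by induction on the length. If the walk repeats a vertex, split it at that vertex into two shorter closed walks; one of them has odd length, and we recurse on it. If no vertex repeats, the walk is itself a cycle.

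Applying this fact yields an odd cycle of length less than $g$, contradicting the minimality of $C$. Therefore $d_G(v_0,v_k)\geq k$, as required. I do not expect a genuine obstacle; the only care needed is the parity bookkeeping and the bipartite convention noted above.
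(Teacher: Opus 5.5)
Your proof is correct and complete. The paper gives no argument of its own; it cites the lemma as folklore (Biggs, Proposition~23.1). Your reasoning is exactly the standard proof behind that citation: on a shortest odd cycle, a shortcut of length $d<k$ between $v_0$ and $v_k$, closed up with the arc of the right parity, gives an odd closed walk shorter than $g$.

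Your caveat that the statement must be read for non-bipartite $G$ is also well taken. For $k\geq 2$, a bipartite graph such as $K_2$ (odd girth $\infty$ by convention) violates the literal statement, and the paper only invokes the lemma for a non-bipartite $G$ in the proof of \Cref{thm:main-girth_asymptotical}.
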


The diameter bound immediately yields the $k^{-3}$ order for graphs
of large odd girth.

\begin{proof}[Proof of~\Cref*{thm:main-girth_asymptotical}] 
Let $k$ be an integer with $k\geq 22$. Assume to the contrary that $G$ is an $n$-vertex graph of odd girth at least $2k+1$ with $\mu_2(G)>\frac{3456n}{k^3}$ that is not bipartite. Since $\mu_2(G)>0$, $G$ is connected. By~\Cref{lem:odd girth-diameter},
the diameter of $G$ is at least $k$. Hence, \Cref{thm:diameter-bound} implies that $\mu_2(G) \leq \frac{3456n}{k^3},$
a contradiction. 

\medskip
To show $k^{-3}$ is tight, we consider all the odd cycles $C_{2k+1}$. Note that $C_{2k+1}$ has odd girth exactly $2k+1$ and, by~\Cref{lem:precise-alg-connectivity}\ref{lem:C_2k+1}, $\mu_2(C_{2k+1})
=2-2\cos(\frac{2\pi}{2k+1})$.
Therefore,
$\frac{\mu_2(C_{2k+1})}{|V(C_{2k+1})|}=\frac{2-2\cos(\frac{2\pi}{2k+1})}{2k+1}=\frac{\pi^2}{2k^3}+O(k^{-4}).$
\end{proof}

\section{Remarks and questions}\label{sec:Remarks}

We make some remarks and pose some further questions in this section.

\begin{remark}\label{rmk:C5}
Every $n$-vertex triangle-free graph with $\mu_2(G)\geq \frac{n}{3}$ admits a homomorphism to $C_5$.
Moreover, the constant $\frac{1}{3}$ is asymptotically best possible.
\end{remark}

Indeed, for $\ell\geq 1$, let $T_\ell:=K_{8/3}(\ell,1,1,\ell,1,1,\ell,1)$, where the part sizes correspond to the vertices $0,1,\ldots,7$ of $K_{8/3}$, respectively. Clearly, $T_\ell$ is triangle-free and $|V(T_\ell)|=3\ell+5$. By~\Cref{lem:K_4k/2k-1ToC_2k+1}, $K_{8/3}$ admits no homomorphism to $C_5$. Since $T_\ell$ contains a copy of $K_{8/3}$, $T_\ell$ also admits no homomorphism to $C_5$. Moreover, the normalized part sizes of $T_\ell$ are $(\frac{\ell}{3\ell+5},\frac{1}{3\ell+5},\frac{1}{3\ell+5},\frac{\ell}{3\ell+5},\frac{1}{3\ell+5},\frac{1}{3\ell+5},\frac{\ell}{3\ell+5},\frac{1}{3\ell+5}),$ which converge to $(\frac{1}{3},0,0,\frac{1}{3},0,0,\frac{1}{3},0
)$ as $\ell\to\infty$. Hence, let $Q_\infty$ denote the limit of the normalized quotient matrix $Q_\ell$ when $\ell\to\infty$. 
A direct calculation gives $\operatorname{Spec}(Q_\infty)=\left\{0,\left(\frac{1}{3}\right)^{[6]},1\right\}$ and  $\theta_2(Q_\ell)\to\frac{1}{3}$. By~\Cref{lem:blowup-reduction-rk},
$\frac{\mu_2(T_\ell)}{3\ell+5}=
\min\{\theta_2(Q_\ell),\frac{\ell+2}{3\ell+5},\frac{2\ell+1}{3\ell+5}\}.$
As $\ell\to\infty$, $
\lim_{\ell\to\infty}\frac{\mu_2(T_\ell)}{|V(T_\ell)|}=
\frac{1}{3}.$
This implies that the constant $\frac{1}{3}$ is asymptotically best possible.

It would be interesting to determine the optimal bound under which a graph of large odd girth must be bipartite, as the bound $\frac{4}{6k-1}$ in~\Cref{thm:main-girth} does not appear to be tight.

\begin{question}\label{que:f(k)}
Determine the optimal function $f(k)$ such that every $n$-vertex graph $G$ of odd girth at least $2k+1$ satisfying $\mu_2(G)>f(k)n$ is bipartite.
\end{question}

For a graph $H$ with at least one edge, we define
$$
\tau_k(H):=\sup\{\frac{\mu_2(G)}{|V(G)|}:G \text{~has odd girth at least~} 2k+1\ \text{and}\ G\nrightarrow H\}.
$$
Thus $\tau_k(H)$ is the optimal normalized algebraic-connectivity threshold forcing a homomorphism of $G$ to $H$.

Together with its asymptotic sharpness statement,
\Cref{thm:main} implies that
$\tau_2(K_2)=\frac{1}{3}$, whereas
\Cref{thm:main-girth} yields
$\tau_k(K_2)\leq \frac{4}{6k-1}$ for every $k\geq 3$.
Moreover, \Cref{thm:main-girth_asymptotical} shows that
$\tau_k(K_2)=\Theta(k^{-3})$ as $k\to\infty$. Thus \Cref{que:f(k)} amounts to determining the exact value of $\tau_k(K_2)$ for each $k\geq 3$. Beyond bipartiteness, it is natural to study the algebraic-connectivity
homomorphism thresholds for the two circular-clique targets $C_{2k+1}$ and $K_{4k/(2k-1)}$. 
\begin{question}
Determine the exact values of $\tau_k(C_{2k+1})$ and $\tau_k(K_{4k/(2k-1)})$, and characterize the corresponding extremal graphs.
\end{question}

\Cref{rmk:C5} actually implies that $\tau_2(C_5)=\frac{1}{3}$. Since $K_2\to C_{2k+1}\to K_{4k/(2k-1)}$,  the monotonicity of the homomorphism thresholds gives $\tau_k(K_{4k/(2k-1)}) \leq \tau_k(C_{2k+1})
\leq \tau_k(K_2).$
It would be interesting to determine whether both inequalities are strict for every $k\geq 3$ and to characterize the corresponding
extremal graphs.

More generally, for $1\leq\ell\leq k$, we are interested in determining $\tau_k(C_{2\ell+1})$, namely, the optimal normalized algebraic-connectivity threshold forcing a homomorphism to $C_{2\ell+1}$ among graphs of odd girth at least $2k+1$.

\section*{Declaration of AI usage}
During the development and preparation of this manuscript, the authors made limited use of ChatGPT 5.6 Pro to explore potential approaches, assist with calculations, and improve the presentation of the manuscript. All AI-assisted suggestions and arguments were carefully reviewed, revised, and independently verified by the authors. The authors take full responsibility for the originality of the problems, the mathematical verification, the final formulation of all proofs, and the accuracy and content of the manuscript.

\section*{Acknowledgments}
The authors would like to thank Professor Hong-Jian Lai for inspiring them to investigate the connections between spectral graph parameters and graph homomorphism properties. This work is the authors' first exploration of problems at this interface.

Zhouningxin Wang is supported by National Natural Science Foundation of China (Nos. 12301444, 12671413) and the Fundamental Research Funds for the Central Universities, Nankai University.

\end{document}